\def\H{{\cal H}}
\def\R{\mathbb{R}}
\def\H2{H^2(\R)}
\def\L2{L^2(\R)}
\newcommand{\dx}{\,\mathrm{d}x}
\newcommand{\sech}{\,\mathrm{sech}}
\def\H{{\cal H}}
\def\H1{H^1(\R)}
 \newcommand{\la}{\lambda}
 \newcommand{\Del}[1]{}
\numberwithin{equation}{section}
\newtheorem{thm}{Theorem}[section]
\newtheorem{cor}[thm]{Corollary}
\newtheorem{lem}[thm]{Lemma}
\newtheorem{claim}[thm]{Claim}
\newtheorem{prop}[thm]{Proposition}
\newtheorem{definition}[thm]{Definition}
\theoremstyle{remark}
\newtheorem{remark}[thm]{Remark}
\newtheorem*{exam*}{Examples}
\title[Instability of the solitary waves for the gBBM]{Instability of the solitary waves for the Generalized Benjamin-Bona-Mahony Equation}
\author{Rui Jia}
\address{(R. Jia) Center for Applied Mathematics\\
	Tianjin University\\
	Tianjin 300072, China}
\email{jiarui0305@tju.edu.cn}
\author{Yifei Wu}
\address{(Y.Wu) Center for Applied Mathematics\\
	Tianjin University\\
	Tianjin 300072, China}
\email{yerfmath@gmail.com}
\thanks{}
\date{}
\subjclass[2010]{Primary 35B35; Secondary 35L70}
\keywords{generalized Benjamin-Bona-Mahony equation, instability, critical frequency, traveling wave.}
\begin{document}
	\maketitle
		
	\setcounter{page}{1}

\begin{abstract}

In this work, we consider the generalized Benjamin-Bona-Mahony equation 
$$\partial_t u+\partial_x u+\partial_x( |u|^pu)-\partial_t \partial_x^{2}u=0, 
 \quad(t,x) \in \mathbb{R} \times \mathbb{R}, $$ 
with
 $p>4$.
This equation has the traveling wave solutions $\phi_{c}(x-ct), $ for any frequency $c>1.$ 
It has been proved by Souganidis and Strauss \cite{Strauss-1990} that,  
there exists a number $c_{0}(p)>1$, such that solitary waves $\phi_{c}(x-ct)$ with $1<c<c_{0}(p) $ is orbitally  unstable, while for $c>c_{0}(p), $ $\phi_{c}(x-ct)$ is orbitally stable. The linear exponential instability in the former case was further proved by Pego and Weinstein \cite{Pego-1991-eigenvalue}.
	 In this paper, we prove the orbital instability in the critical case $c=c_{0}(p)$. 
	   	\end{abstract}
   	
\section{Introduction}

It is well known that the KdV equation is a classical model used to describe the characteristics of water waves of long wave length in river channels. When studying nonlinear dispersive long wave unidirectional propagation, Benjamin, Bona, and Mahony \cite{T. B.benjamin-1972} considered a new model named the Benjamin-Bona-Mahony (BBM) equation, which can describe physical properties of long waves better. The BBM equation reads
\begin{align*}
	 u_t+u_x+uu_x-u_{txx}=0.
\end{align*}

 In this paper, we consider the following generalized Benjamin-Bona-Mohony (gBBM) equation
\begin{align}
	\label{1.1}
	\partial_t u+\partial_x u+\partial_x(|u|^pu)-\partial_t \partial_x^{2}u=0
	\quad(t,x) \in \mathbb{R} \times \mathbb{R}
\end{align}
with  $p>0. $ For $H^1$-solution, the momentum $Q$ and the energy $E$ are conserved under the flow, where
  \begin{align}
  	\label{momentum Q}
  	&Q(u)=\frac{1}{2}\int_{\mathbb{R}} u^{2}+u_{x}^{2} \dx;\\
  	\label{energy E}
  	&E(u)=\frac{1}{2}\int_{\mathbb{R}} u^{2}\dx+\frac{1}{p+2}\int_{\mathbb{R}}|u|^{p+2} \dx.
  \end{align}
 In particular, the equation \eqref{1.1} can be expressed in the following Hamiltonian form
\begin{align}
	\label{equa-hami}
	\partial_t u= JE'(u),\qquad \mbox{ where } \qquad J=-(1-\partial_x^2)^{-1}\partial_x.
\end{align}
 In \cite{Strauss-1990}, Souganidis and Strauss proved that if $u_{0}=u(0,x)\in \H1, $ there exists a unique global solution $u$ of \eqref{1.1} in $C(\R; \H1)$. 
  
The equation \eqref{1.1} has the solitary waves solution given by
$u(x, t)=\phi_{c}(x-ct)$ for any $c>1$, 
where $\phi_{c}$ is the ground state solution of the following elliptic equation
\begin{align}
	\label{1.3}
	-c\partial_{xx}\phi_{c}+(c-1)\phi_{c}-\phi_{c}^{p+1}=0.
\end{align}
The ground state solution $\phi_{c}$ is a smooth, even, and positive function, which decays exponentially as $|x|\rightarrow \infty $, in the sense that $|\phi_{c}|\leq C_{1}e^{-C_{2}|x|},  |\partial_x\phi_{c}|\leq C_{1}e^{-C_{2}|x|}$ 
for some $C_{1}, C_{2}>0. $ Then a natural problem is the stability theory of the solitary waves solution $\phi_{c}(x-ct)$, which is defined as follows. 
For $\varepsilon>0, $ we denote the set $	U_{\varepsilon}(\phi_{c})$ as:
  \begin{align}
  	\label{1.9}
  	U_{\varepsilon}(\phi_{c})&=
\{
u \in  C(\R; \H1): \sup_{t \in \R}\inf_{y\in\mathbb{R}}\big\|u-\phi_{c}(\cdot-y)\big\|_{\H1}<\varepsilon
\}.
  \end{align}
Then we define the orbital stability/instability of the solitary waves as 
\begin{definition}
	We say that the solitary waves solution $\phi_{c}(x-ct)$ of \eqref{1.1} is orbitally stable if for any $\varepsilon>0, $ there exists $\delta>0$ such that if 
	$\big\|u_{0}-\phi_{c}\big\|_{\H1}<\delta, $
	then the solution $u$ of \eqref{1.1} with
	 $u(0, x)=u_{0}(x)$
	 satisfies
	 $u \in U_{\varepsilon}(\phi_{c}). $ 
	 Otherwise, $\phi_{c}(x-ct)$ is said to be orbitally unstable.
\end{definition}

Regarding the stability theory of these solitary waves, Souganidis and Strauss \cite{Strauss-1990} proved that when $0<p\leq 4$, the solitary waves solution $\phi_{c}(x-ct)$ is orbitally stable for all $c>1$, while when $p>4$, the solitary waves solution $\phi_{c}(x-ct)$ is orbitally unstable in $\H1$ for $1<c<c_{0}(p)$ and orbitally stable in $\H1$ for $c>c_{0}(p)$. Here
$$
c_{0}(p):=\frac{p}{4+2p}\left(1+\sqrt{2+\frac{1}{2}p}\right).
$$
Denote $d(c):=E(\phi_{c})-cQ(\phi_{c})$, the critical parameter $c_0(p)$ is determined by 
$$
d''(c)\Big|_{c=c_0(p)}=0.
$$
Since the operator $J$ is not onto, the framework of Grillakis, Shatah, and Strauss \cite{GrShStr-87,GrShStr-90} cannot be directly applied to study the stability of the solitary waves $\phi_{c}(x-ct)$. Therefore, the work \cite{Strauss-1990} is not a direct application of the theories established in \cite{GrShStr-87,GrShStr-90}. For further discussion on these cases, readers are referred to the more recent paper \cite{LZ-2022-MAMS} by Lin and Zeng.

In \cite{Pego-1991-eigenvalue}, Pego and Weinstein established criteria for the linear exponential instability of solitary waves solution of the gBBM equation. They further proved the linear exponential instability of $\phi_{c}(x-ct)$ for each $p>4$ when $1<c<c_{0}(p)$.

So far, the stability of the solitary waves $\phi_{c}(x-ct)$ has been nearly established, except for the critical case $c=c_0(p)$, which corresponds to the degenerate case where $d''(c)=0$. In this paper, our aim is to fully establish the stability of the solitary waves $\phi_{c}(x-ct)$ by studying the degenerate case of $c=c_0(p)$.

Before presenting our theorem, let us clarify some definitions that will be used. 
We define the functional $S_c$ as
\begin{align}
	\label{S_c}
	S_c(u):=E(u)-cQ(u).
\end{align}
Then the equation \eqref{1.3} is equivalent to $S_c'(\phi_{c})=0$.
For convenience, we denote that 
$$\omega=c^{-\frac12},$$
and
\begin{align}
	\label{1.4}
	\psi_{\omega}(x)=c^{-\frac{1}{p}}\phi_{c}(x).
\end{align}
Then by \eqref{1.3}, we find that $\psi_{\omega}$ satisfies the following equation:
\begin{align}
	\label{1.5}
	-\partial_{xx}\psi_{\omega}+(1-\omega^{2})\psi_{\omega}-\psi_{\omega}^{p+1}=0.
\end{align}

As mentioned earlier, our objective is to demonstrate the instability of the solitary waves in the critical frequency case: $c=c_0(p)$. 
Our argument relies on the assumption of the negativity of a specific direction of the Hessian operator $S_c''(\phi_{c})$, which is confirmed numerically. 
More precisely, let  
 	\begin{align}
 		\label{Psi_c}
	        \Psi_{c}&:=-\frac{1}{2}\omega^{1-\frac{2}{p}}\partial_{\omega} \psi_{\omega},\\
 		\label{Gamma_c}
 		\Gamma_c
 		&:=B(c)\left[c^2\Psi_{c}+\frac{c}{2}x\partial_{x}\phi_{c}+c\phi_{c}\right]+D(c)(3x^2\phi_{c}+x^3\partial_{x}\phi_c),
 	\end{align}
 	where 
 	\begin{align*}
 		&D(c)=-\frac{4pc+4c-3p}{2(p+4)}\big\|\phi_{c}\big\|^2_{L^2},
 		\quad
 		B(c)=\frac{3}{2}\big\|x\phi_{c}\big\|^2_{L^2}+\frac{9}{2}\big\|x\partial_{x}\phi_{c}\big\|^2_{L^2}-3\big\|\phi_{c}\big\|^2_{L^2}.
 	\end{align*}
 	We assume that 
 	\begin{align}
 		\label{second-orth-condition}
 		\langle S''_c(\phi_{c})\Gamma_c, \Gamma_c \rangle <0 \quad  \mbox{ holds for }  c=c_0(p) \mbox{ with }p>4,
 	\end{align}
which is checked numerically
 	\footnote{According to Appendix \ref{numerical result}, we use Matlab to compute  $\langle S''_c(\phi_{c})\Gamma_{c}, \Gamma_{c}\rangle <0. $ It suffices in practice to run the computations until $p=100$ to check $\langle S''_c(\phi_{c})\Gamma_{c}, \Gamma_{c}\rangle <0 $ as the inner product is decreasing fastly as a power function when $p$ is bigger than $10. $ We refer to ``Appendix \ref{numerical result}" for more details. } 
 	in Appendix \ref{numerical result}.

  The main result in the present paper is the following.
 \begin{thm}\label{main:thm}
 	Let $p>4, c=c_0(p)$ and $\phi_{c}$ be the ground state of \eqref{1.3}. 
 	Assume \eqref{second-orth-condition}, then the solitary waves solution $\phi_{c}(x-ct)$ is orbitally unstable.
 \end{thm}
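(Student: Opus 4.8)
The plan is to argue by contradiction, following the modulation-plus-virial scheme for degenerate orbital instability, adapted to the non-invertible symplectic operator $J=-(1-\partial_x^2)^{-1}\partial_x$ of the gBBM. Assuming $\phi_{c_0}$ is orbitally stable, I will build a functional that stability forces to stay bounded along the flow, yet which I show is strictly monotone, yielding a contradiction. The instability here is genuinely nonlinear: since Pego--Weinstein's unstable eigenvalue has collided with the origin at $c=c_0$, there is no exponentially growing linear mode, and the mechanism must come from the enlarged Jordan block of the linearized operator at the degenerate frequency.

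First I would fix the standard local decomposition. For $u$ in a small tube $U_\varepsilon(\phi_{c_0})$ I write $u(t,\cdot)=\phi_{c(t)}(\cdot-y(t))+v(t,\cdot-y(t))$, with $c(t),y(t)$ determined by orthogonality conditions chosen to project out the translational and degenerate directions of the Jordan chain (e.g.\ $\langle v,\partial_x\phi_c\rangle=0$ and a condition on the $\partial_c\phi_c$/$\Gamma_c$ direction). The implicit function theorem gives smooth modulation parameters, and $\|v\|_{H^1}$ stays small as long as $u$ remains in the tube. Conservation of $E$ and $Q$ then gives $S_{c_0}(u)-S_{c_0}(\phi_{c_0})=\tfrac12\langle S_{c_0}''(\phi_{c_0})v,v\rangle+(\text{higher order})$. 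The crux is that at $c=c_0$ this quadratic form is only \emph{semi}-definite on the constraint space: since $d''(c_0)=-\langle S_{c_0}''(\phi_{c_0})\partial_c\phi_{c_0},\partial_c\phi_{c_0}\rangle=0$, the vector $\partial_c\phi_{c_0}$ becomes a null direction in addition to the translational mode $\partial_x\phi_{c_0}$. Equivalently, the generic length-two Jordan chain $\{\partial_x\phi_c,\partial_c\phi_c\}$ of $JS_{c_0}''(\phi_{c_0})$ extends precisely when $d''=0$; extending it requires inverting $J$ (that is, $\partial_x$ and $1-\partial_x^2$) against the exponentially localized profile $\phi_c$, which produces exactly the weighted profiles $x\partial_x\phi_c,\,x^2\phi_c,\,x^3\partial_x\phi_c$ appearing in \eqref{Gamma_c}, with $\Psi_c$ carrying the $\partial_c\phi_c$ content of \eqref{Psi_c} and the constants $B(c),D(c)$ being the normalizations that enforce solvability/orthogonality at each step. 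Thus $\Gamma_{c_0}$ encodes the degenerate null-direction of the dynamics, and assumption \eqref{second-orth-condition} says that the energy along this direction is negative --- the degenerate analogue of $d''<0$. Note all the weighted profiles lie in $H^1$ because the polynomial weights multiply exponentially decaying functions.

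With $\Gamma_{c_0}$ in hand I would introduce a virial/monotonicity functional built from $\Gamma_{c_0}$ and the Hamiltonian form \eqref{equa-hami} --- morally the pairing of $v$ against a $J$-transform of $\Gamma_{c_0}$ --- and compute its time derivative using \eqref{equa-hami} together with the modulation equations for $\dot c,\dot y$. After substituting the equation and integrating by parts against the $x$-weights, the leading contribution is a fixed positive multiple of $-\langle S_{c_0}''(\phi_{c_0})\Gamma_{c_0},\Gamma_{c_0}\rangle$, with every remaining term controlled by $o(1)$ multiples of $\|v\|_{H^1}$ or by the (quadratically small) modulation rates. By \eqref{second-orth-condition} this yields a uniform lower bound $\ge\kappa>0$ on the derivative as long as $u$ stays in the tube, so the functional grows without bound; but orbital stability forces the functional to satisfy $|\,\cdot\,|\le C\|v\|_{H^1}\le C\varepsilon$ for all time, a contradiction. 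Hence $\phi_{c_0}(x-c_0t)$ is orbitally unstable.

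The hardest part is the monotonicity computation. Because the quadratic form is degenerate, coercivity no longer controls $\|v\|_{H^1}$, so one must show that the indefinite and nonlinear error terms in the derivative are genuinely subordinate to the definite term coming from $\Gamma_{c_0}$. This requires (i) exploiting the exponential decay of $\phi_c$ and $\partial_x\phi_c$ to tame the growing weights $x,x^2,x^3$, so that the weighted pairings are finite and the integrations by parts are justified; (ii) choosing the orthogonality conditions so that the kernel and the lower Jordan modes are projected out of the error, leaving $\Gamma_{c_0}$ as the only leading direction; and (iii) closing the estimate via the precise algebraic identities relating the virial terms in \eqref{Gamma_c} to $S_c''(\phi_c)$ --- the very identities that pin down $B(c)$ and $D(c)$ --- so that the numerically verified negativity \eqref{second-orth-condition} transfers directly into the sign-definite lower bound on the derivative.
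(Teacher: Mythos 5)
Your overall scaffolding (contradiction via modulation plus a monotone functional, with $\Gamma_c$ playing the role of the degenerate direction) matches the paper, but the core of your argument --- where the strict monotonicity comes from --- contains a genuine gap, and in fact cannot work as stated. You propose a functional that is essentially $\langle v, J\mbox{-transform of }\Gamma_{c_0}\rangle$, i.e.\ \emph{linear} in $v$, and claim its time derivative has ``a fixed positive multiple of $-\langle S_{c_0}''(\phi_{c_0})\Gamma_{c_0},\Gamma_{c_0}\rangle$'' as leading term. A functional linear in $v$ has a derivative that is again linear in $v$ (plus modulation terms $\dot\lambda\,\langle\partial_\lambda\phi_\lambda,\cdot\rangle$, $(\dot y-\lambda)\langle\partial_x\phi_\lambda,\cdot\rangle$, which are themselves $O(\|v\|_{H^1})$); it vanishes when $v=0$ and cannot be bounded below by a constant $\kappa>0$. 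There is also a structural obstruction to any such uniform bound: the exact solitary wave $u=\phi_{c_0}(x-c_0t)$ stays in the tube forever with $v\equiv 0$, and along it any functional of your type is constant, so no monotonicity statement holding ``for all $u$ in the tube'' can have a strictly positive rate. This is precisely why the paper must \emph{choose initial data}: it takes $u_0=(1-a)\phi_c$ with $a>0$ small, and the lower bound on $I'(t)$ obtained in the end is $\tfrac12 C_1a+\tfrac12C_2(\lambda-c)^2$ (Lemmas \ref{lem7.5} and \ref{gamma}), a rate that degenerates as $a\to 0$, exactly as it must. Your proposal never selects initial data, so the contradiction cannot close.

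Relatedly, you misplace the role of the assumption \eqref{second-orth-condition}. In the paper it is never a direct source of growth. It enters in two ways: (i) through Corollary \ref{cor:a.4} it yields $\langle\partial_c\phi_c,\kappa_c\rangle\neq 0$, which is what makes the implicit-function-theorem modulation of Proposition \ref{prop3.1} possible in the degenerate case; and (ii) through Proposition \ref{prop3.4} it gives coercivity $\langle S''_\lambda(\phi_\lambda)\xi,\xi\rangle\gtrsim\|\xi\|^2_{H^1}$ under the orthogonality conditions $\langle\xi,\partial_x\phi_\lambda\rangle=\langle\xi,\kappa_\lambda\rangle=0$, where $\kappa_\lambda=S''_\lambda(\phi_\lambda)\Gamma_\lambda$. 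The second orthogonality condition is then used in Lemma \ref{lem7.4} to annihilate \emph{all} first-order-in-$\xi$ terms in the derivative of the localized virial functional $I(t)=I_1(t)+I_2(t)$ (a cutoff-weighted integral of the energy density plus a correction, not a pairing of $\xi$ against a fixed profile); this is why $\Gamma_c$ has the specific form \eqref{Gamma_c}, combining the pre-images $\Psi_c$, $x\partial_x\phi_c$ of Lemma \ref{lem3.3} with $\partial_x(x^3\phi_c)$ coming from the refined estimate of $\dot y-\lambda$ in Corollary \ref{cor:5.2}. The positive terms driving the growth are instead $\beta(u_0)\geq C_1a$ (from the choice of data) and $\gamma(\lambda)\gtrsim(\lambda-c)^2$ (from the flatness $d''(c_0)=0$, via Lemma \ref{lem2.1} and Corollary \ref{cor2.2}), and the error terms are absorbed using $\|\xi\|^2_{H^1}\lesssim a|\lambda-c|+a^2+o((\lambda-c)^2)$ (Lemma \ref{lem7.6}), which itself rests on the coercivity. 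Without these ingredients --- data selection, the cancellation of linear terms by the $\kappa_\lambda$-orthogonality, and the coercivity-based control of $\|\xi\|_{H^1}$ by $|\lambda-c|$ --- the monotonicity you assert does not hold.
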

 
 \begin{remark}
 (1)  In the previous works \cite{Pego-1991-eigenvalue} and \cite{Strauss-1990}, the stability and instability results of solitary waves solution $\phi_{c}(x-ct)$ for the non-degenerate case $d''(c)\neq 0$ in the gBBM equation have already been established. Under \eqref{second-orth-condition}, Theorem \ref{main:thm} closes the only remaining gap for the degenerate case $d''(c)= 0$ and thereby completes the entire stability theory of solitary waves solution for the gBBM equation. We also give  an element numerical computation to check \eqref{second-orth-condition}.

(2) The instability of the solitary waves $\phi_{c}(x-ct)$ with $1<c<c_{0}(p)$ has been demonstrated using the Lyapunov stability argument based on the monotonicity of the Lyapunov functional in the non-degenerate cases $d''(c)\neq 0. $ However, this argument does not apply to the degenerate cases, as the Lyapunov functional loses monotonicity when $d''(c)=0. $ Therefore, we need to construct a new monotonic functional. The outline of the proof will be provided in the following subsection.
  
\end{remark}

\subsection{Sketch of the proof}
The main approach is to construct a monotonic quantity based on virial quantities and the modulation argument, drawing inspiration from \cite{wu-KG}, which established the instability of standing wave solutions of the Klein-Gordon equation in the degenerate case. The methodology involves analyzing the orthogonality conditions and the dynamics of the modulated parameters. However, due to the intricate structure of the gBBM equation, constructing the monotonic functional in this paper is much more complex than the Klein-Gordon equation case. In particular, the non-onto property of the skew symmetry operator $J$ poses significant obstacles. The key ingredients of the proof can be summarized as follows.

  {\it Step1: Modulation.} First of all, we assume the solitary wave is stable. The modulation argument allows us to find two parameters, $y(t)$ and $\lambda(t), $ and a perturbation function $\xi$, such that the solution $u$ can be expressed as 
  \begin{align}
  	\label{perturbation-fun}
  	u(t,x)=(\phi_{\lambda}+\xi)\big(x-y(t)\big),
  \end{align}
 where $\lambda(t)$ is a scaling parameter suitably defined, and $y(t)$ is a spatial translation parameter.
 We also need to find two different orthogonality conditions, namely: 
  \begin{align}
  	\label{intro-orth-vague}
  	\langle \xi, \psi_1 \rangle=\langle \xi, \psi_2 \rangle=0.
  \end{align} 
To find suitable $\psi_1$ and $\psi_2, $ it is natural to consider the spectrum of the Hessian of the action $S_{c}''(\phi_{c})$.
 The study of \cite{weinstein-1985-modulational} indicates that $\ker S''_c(\phi_{c})=\{\alpha\partial_{x}\phi_{c}, \alpha\in \R\}$ and $S''_c(\phi_{c})$ has a unique negative eigenvalue.
 The properties of spectrum of $S''_c(\phi_{c})$ helpfully identify the origin of $\psi_1$ and $\psi_2, $ specifically:
 \begin{align}
 	\label{orth-orign}
 	\psi_1 \in \ker S''_c(\phi_{c}), \quad \langle S''_c(\phi_{c})\psi_2, \psi_2\rangle <0.
 \end{align}
 It is worth noting that $\psi_2$ is not unique and it is not necessary to choose the negative eigenfunction. 
Indeed, the choice of $\psi_2$ is crucial as its concrete expression has a significant impact on the construction of monotonicity, which will be addressed in Step 6 below. 

 {\it Step2: Coercivity.}
 Having determined the properties of $\psi_1$ and $\psi_2 $ (i.e.,\eqref{orth-orign}), we shall prove the the coercivity of the Hessian $S_{\lambda}''(\phi_{\lambda})$ as shown in Proposition \ref{prop3.4} for a general criterion by means of spectral decomposition argument, which can be expressed as follows:
 \begin{align*}
  \langle S_{\lambda}''(\phi_{\lambda})\xi, \xi\rangle\gtrsim \|\xi\|^2_{H^1}.
 \end{align*}
 In addition, in the degenerate case $c= c_0(p)$, we have 
 \begin{align}\label{flatness}
  \langle S_{c}''(\phi_{c})\partial_{c}\phi_{c}, \partial_{c}\phi_{c}\rangle= 0.
  \end{align}
This flatness equality \eqref{flatness}, combined with the coercivity of $S_{\lambda}''(\phi_{\lambda})$, implies an important estimate: 
 \begin{align}\label{control-xi}
 \|\xi\|_{H^1}\ll |\lambda-c|.
 \end{align}
 This means that the perturbation of the solution $\xi, $ can be controlled by the scaling increment $\lambda-c$. 

 We emphasize that the Step 1 and Step 2 here are similar to the paper in \cite{wu-KG}, however, the following steps are of much problem-dependence and much more complicated for BBM mainly due to its poor Hamiltonian structure.

 {\it  Step 3: Dynamic of the modulation parameters.}
  Directly following the Implicit Function Theorem, the (translation) modulation parameter $y$ has a trivial bound given by 
 \begin{align}\label{rough-y-dot}
 	\dot y-\lambda=O(\|\xi\|_{H^1}).
 \end{align}
In simpler terms, $\dot y-\lambda$ is actually the first-order of $\xi.$
However, the rough estimate is not enough to support the later analysis.
To obtain a more accurate estimate, we apply \eqref{perturbation-fun} to \eqref{equa-hami}, which yields 
 \begin{align}
 	\label{intro-dynamic}
 	 \dot \xi+\partial_\lambda \phi_\lambda \cdot \dot \lambda-\partial_x \phi_\lambda\cdot (\dot y-\lambda)
 	=JS_\lambda''(\phi_\lambda)\xi+\mbox{``high-order  term''}. 
 \end{align}
 This gives us the key expression of the dynamic of $\dot y-\lambda$: 
 \begin{align}\label{dot-y-Roughdynamic}
 	 \dot y-\lambda= c_1(\lambda)\left\langle \xi, S_\lambda''(\phi_\lambda)\big(Jf_{\lambda}\big)\right\rangle+c_1(\lambda)\partial_t \big\langle \xi, f_{\lambda}\big\rangle+\mbox{``high-order  term''},
 \end{align}
 for any $f_{\lambda}$ satisfying 
 \begin{align}
 	\label{condition-f_lambda}
  \langle f_{\lambda},\partial_\lambda \phi_\lambda\rangle=0,\quad \langle f_{\lambda},\partial_x \phi_\lambda\rangle\ne 0.
 \end{align}
Obviously, the function $f_{\lambda}$ satisfying condition \eqref{condition-f_lambda} is not unique. The estimate \eqref{dot-y-Roughdynamic} is therefore relatively flexible, depending on the choice of $f_\lambda$. Indeed, the latter almost determines the expression of the first-order of $\xi$ which appears in $I'(t)$ defined later. 
This constitutes the first key ingredient in our proof.

{\it Step 4: Design of the virial identity.}  
We are now in the position to consider the construction of virial identity $I(t). $ Our goal is to show that it exhibits monotonic behavior, i.e., $I'(t)>0$ or $I'(t)<0. $ The virial identity typically arises from conservation laws such as \eqref{momentum Q}-\eqref{energy E} and the dynamic of the modulated function as in \eqref{intro-dynamic}. The ideal form of $I'(t)$ is as follows:
\begin{align}
	\label{idealform}
	I'(t)= \beta(u_{0})+\gamma(\lambda)+\mbox{``high-order term"},
\end{align}
where the high-order term is in fact $\|\xi\|^{2}_{H^1} $ that has been estimated in Step 2. 
If $\beta(u_0)$ is a positive quantity,
and $\gamma(\lambda)$ is also a positive quantity satisfying $\gamma(\lambda)\gtrsim(\lambda-c)^2 $ which requires that \begin{align}\label{positive-gamma(lambda)}
	\gamma(c)=\gamma'(c)=0 \quad \mbox{and}\quad \gamma''(c)>0,
\end{align}
 then the monotonicity of virial identity is guaranteed. This constitutes the second key ingredient in our proof.

 {\it Step 5: Construction of the monotonicity.} 
 Unlike the Lyapunov functional, the main monotonic functional here comes from the localized virial identity. Specifically, we first define 
$$
I(t)=\int_{\R} \chi(x-y(t))(\frac{1}{2}u^{2}+\frac{1}{p+2}|u|^{p+2}) \dx.
$$
where $\chi$ is a suitable smooth cutoff function. 
By the expansion \eqref{perturbation-fun}, we observe that $I'(t)$ has the following structure: 
$$
I'(t)=\beta(u_{0})+\gamma(\lambda)+c_2(\lambda)\cdot(\dot{y}-\lambda)+c_3(\lambda)\langle \xi, \phi_{\lambda}+\partial_{xx}\phi_{\lambda}\rangle+c_4(\lambda)\langle \xi, S''_{\la}(\phi_{\lambda})\phi_{\lambda}\rangle
+O\big(\|\xi\|^2_{H^1}\big).
$$
 Since the precise estimate of $\dot{y}-\lambda $ is already known in Step 3, we obtain the structure of $I'(t)$ as follows:
\begin{align*}
	I'(t)
	&=
	 \beta(u_{0})+\gamma(\lambda)
	+\langle \xi, S''_{\lambda}(\phi_{\lambda})\big[c_1(\lambda)c_2(\lambda)Jf_{\lambda}+c_4(\lambda)\phi_{\lambda}\big] \rangle 
	 \\
	&\quad
	+c_1(\lambda)c_2(\lambda)\partial_t \big\langle \xi, f_{\lambda}\big\rangle
	+c_3(\lambda)\langle \xi, \phi_{\lambda}+\partial_{xx}\phi_{\lambda}\rangle
	+O\big(\|\xi\|^2_{H^1}\big).
\end{align*}
Move the term $c_1(\lambda)c_2(\lambda)\partial_t \big\langle \xi, f_{\lambda}\big\rangle$  to the left-hand side, and we further obtain that 
\begin{align}
	\label{stru-I'(t)}
	\frac{d}{dt}\Big(I(t)-c_1(\lambda)c_2(\lambda)\big\langle \xi, f_{\lambda}\big\rangle\Big)
	&=
	 \beta(u_{0})+\gamma(\lambda)
	+\langle \xi, S''_{\lambda}(\phi_{\lambda})\big[c_1(\lambda)c_2(\lambda)Jf_{\lambda}+c_4(\lambda)\phi_{\lambda}\big] \rangle 
	\notag\\
	&\quad
	+c_3(\lambda)\langle \xi, \phi_{\lambda}+\partial_{xx}\phi_{\lambda}\rangle
	+O\big(\|\xi\|^2_{H^1}\big).
\end{align}
This inspires us to make a bold assumption: After suitably choosing $f_{\lambda}$, if 
\begin{align}
	\label{intro-orth}
	\left\langle \xi, S''_{\lambda}(\phi_{\lambda})\big[c_1(\lambda)c_2(\lambda)Jf_{\lambda}+c_4(\lambda)\phi_{\lambda}\big]
	+c_3(\lambda)\big(\phi_{\lambda}+\partial_{xx}\phi_{\lambda}\big)\right\rangle=0, 
\end{align} 
  then $I'(t)$ will become the ideal form \eqref{idealform}. 
In order to match the form in \eqref{orth-orign}, we need the existence and the explicit expressions of the pre-images of $\phi_{\lambda} $ and $\partial_{xx}\phi_{\lambda}$. 
As a matter of fact, we need to find $\Psi_1$ and $\Psi_2, $ such that
\begin{align*}
	\phi_{\lambda}=S_{\lambda}''(\phi_{\lambda})\Psi_1, \quad
	 \partial_{xx}\phi_{\lambda}=S_{\lambda}''(\phi_{\lambda})\Psi_2.
\end{align*}
It is not an easy task, but we accomplished it. In fact, we observe that
\begin{align*}
	\phi_{\lambda}&=S_{\lambda}''(\phi_{\lambda})\big(-\frac{1}{2}\omega^{1-\frac{2}{p}}\partial_{\omega}\psi_{\omega}\big), 
	\\
	\partial_{xx}\phi_{\lambda}&=S_{\lambda}''(\phi_{\lambda})\big(\frac{1}{2}x\partial_x\phi_{\lambda}\big),
\end{align*}
where $\omega=\lambda^{-\frac{1}{2}}, $ $\psi_{\omega}$ satisfying \eqref{1.5}.
This constitutes the third key ingredient in our proof.

{\it Step 6: Verification of the negative direction.} 
Inspired by \eqref{intro-orth}, 
we denote 
$$
\Upsilon_{\lambda}
=c_1(\lambda)c_2(\lambda)Jf_{\lambda}+c_3(\lambda)\big(-\frac{1}{2}\omega^{1-\frac{2}{p}}\partial_{\omega}\psi_{\omega}+\frac{1}{2}x\partial_x\phi_{\lambda}\big)+c_4(\lambda)\phi_{\lambda}. 
$$
It is time to verify the negativity of $S''_{\lambda}(\phi_{\lambda})$ on $\Upsilon_{\lambda}$. More precisely, the problem finally  reduces to the following claim: 
\begin{claim}
There exists a function $f_{\lambda}$ verifying \eqref{condition-f_lambda}, such that 
\begin{align}
	\label{negative direction}
	\langle S''_{\lambda}(\phi_{\lambda})\Upsilon_{c},\Upsilon_{c}\rangle<0.
\end{align}
\end{claim}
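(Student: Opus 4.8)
The plan is to transfer the numerically verified negativity \eqref{second-orth-condition} from $\Gamma_c$ to $\Upsilon_c$ by matching the two vectors modulo $\ker S''_c(\phi_c)$. Since $\ker S''_c(\phi_c)=\{\alpha\partial_x\phi_c:\alpha\in\R\}$ and $S''_c(\phi_c)$ is self-adjoint, the cross term $\langle S''_c(\phi_c)\Gamma_c,\partial_x\phi_c\rangle=\langle\Gamma_c,S''_c(\phi_c)\partial_x\phi_c\rangle$ vanishes, as $\partial_x\phi_c\in\ker S''_c(\phi_c)$, whence for all $\mu,\nu\in\R$
\begin{align*}
\langle S''_c(\phi_c)(\mu\Gamma_c+\nu\partial_x\phi_c),\,\mu\Gamma_c+\nu\partial_x\phi_c\rangle=\mu^2\,\langle S''_c(\phi_c)\Gamma_c,\Gamma_c\rangle .
\end{align*}
Thus it suffices to produce an $f_\lambda$ obeying \eqref{condition-f_lambda} for which, at $\lambda=c$, one has $\Upsilon_c=\mu\Gamma_c+\nu\partial_x\phi_c$ with $\mu\ne0$; the desired inequality \eqref{negative direction} then follows immediately from \eqref{second-orth-condition}.

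I would use parity to dispose of the first constraint in \eqref{condition-f_lambda} at no cost. As $\phi_c$ is even, each of $\Psi_c$, $x\partial_x\phi_c$, $\phi_c$ and $3x^2\phi_c+x^3\partial_x\phi_c=\partial_x(x^3\phi_c)$ is even, so $\Gamma_c$ is even, whereas $\partial_x\phi_c$ is odd. Looking for $f_c$ \emph{odd} makes $Jf_c=-(1-\partial_x^2)^{-1}\partial_x f_c$ even, so $\Upsilon_c$ is even and necessarily $\nu=0$; at the same time $\langle f_c,\partial_\lambda\phi_\lambda\rangle=0$ is automatic, since $\partial_\lambda\phi_\lambda$ is even. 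The goal thus collapses to the single identity $\Upsilon_c=\mu\Gamma_c$, i.e.\ (with all coefficients evaluated at $\lambda=c$) to prescribing
\begin{align*}
c_1c_2\,Jf_c&=(\mu Bc^2-c_3)\,\Psi_c+\tfrac12(\mu Bc-c_3)\,x\partial_x\phi_c+(\mu Bc-c_4)\,\phi_c+\mu D\,\partial_x(x^3\phi_c).
\end{align*}

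The decisive point is that $J=-(1-\partial_x^2)^{-1}\partial_x$ is \emph{not onto}: one has $\int_\R Jf\dx=0$ for every Schwartz $f$, and conversely every even, mean-zero, rapidly decaying profile equals $Jf$ for a unique odd decaying $f$. Consequently the displayed equation admits an odd solution $f_c$ \emph{if and only if} its right-hand side has zero integral. Using $\int_\R\partial_x(x^3\phi_c)\dx=0$ and $\int_\R x\partial_x\phi_c\dx=-\int_\R\phi_c\dx$, this reduces to one scalar equation, linear in $\mu$ and involving only $\int_\R\phi_c\dx$, $\int_\R\Psi_c\dx$ and the explicit constants; it determines $\mu$. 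The term $D(c)\partial_x(x^3\phi_c)$ in \eqref{Gamma_c} is exactly the pure-derivative, mean-zero ingredient that $Jf_c$ can furnish, and inverting $J$ on the resulting profile yields the sought odd, decaying $f_c$.

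It then remains to verify the two non-degeneracies $\mu\ne0$ and $\langle f_c,\partial_x\phi_\lambda\rangle\ne0$; both are explicit scalar evaluations in which the specific $L^2$-moment expressions for $B(c)$ and $D(c)$ and the values of $c_1,\dots,c_4$ computed in Steps 3--5 enter decisively. Extending $f_\lambda$ to $\lambda$ near $c$ by the same formulas keeps \eqref{condition-f_lambda} valid by continuity, and continuity of the quadratic form then secures \eqref{negative direction} at $c$. I expect the main obstacle to be precisely the non-ontoness of $J$, a manifestation of the weak Hamiltonian structure of \eqref{1.1}: unlike the Klein--Gordon case of \cite{wu-KG}, $Jf_c$ cannot be chosen freely, so the mean-zero range constraint must be reconciled with \eqref{condition-f_lambda}; the crux is to confirm that this still forces $\mu\ne0$, since it is the non-vanishing of $\mu$ that lets the sign in \eqref{second-orth-condition} propagate to $\Upsilon_c$.
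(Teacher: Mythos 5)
Your reduction target is the correct one, and it is in fact the same as the paper's: both you and the authors aim to make $\Upsilon_c$ a nonzero multiple of $\Gamma_c$ modulo $\ker S''_c(\phi_c)$ (your parity argument disposing of the kernel direction is fine), so that \eqref{negative direction} becomes exactly the numerically verified hypothesis \eqref{second-orth-condition}. The genuine gap is that you never establish the non-degeneracy $\mu\neq 0$, nor $\langle f_c,\partial_x\phi_c\rangle\neq 0$; you explicitly defer both (``the crux is to confirm that this still forces $\mu\ne 0$''). Since the whole conclusion rests on $\mu\neq0$, the proposal stops where the proof must begin. The paper avoids the issue entirely by exhibiting the function: $f_\lambda=(1-\partial_x^2)(x^3\phi_\lambda)$, so that $Jf_\lambda=-(1-\partial_x^2)^{-1}\partial_x(1-\partial_x^2)(x^3\phi_\lambda)=-\partial_x(x^3\phi_\lambda)$; combining this with the pre-images of Lemma \ref{lem3.3} and the very definitions of $B(\lambda)$, $D(\lambda)$ in \eqref{Gamma_c}, one gets the identity $\Upsilon_\lambda=\tfrac{1}{B(\lambda)}\Gamma_\lambda$ (this is the computation carried out in Lemma \ref{lem7.4}), hence $\mu=1/B(\lambda)$, $\langle f_\lambda,\partial_x\phi_\lambda\rangle=-B(\lambda)\neq0$, and \eqref{negative direction} is $B(c)^{-2}$ times \eqref{second-orth-condition}.

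There is also a structural flaw in your inverse-problem formulation: you treat $c_1c_2$ as a fixed constant, but $c_1(\lambda)$ is the normalization coefficient of the modulation equation, $c_1=1/\langle f_\lambda,\partial_x\phi_\lambda\rangle$ up to sign (compare Corollary \ref{cor:5.2}, where this coefficient is $1/B(\lambda)$ precisely because of the choice of $f_\lambda$). Consequently $c_1c_2\,Jf_c$ is invariant under rescaling of $f_c$, and the identity $\Upsilon_c=\mu\Gamma_c$ imposes \emph{two} scalar constraints on the single unknown $\mu$: your mean-zero (range-of-$J$) condition, and the self-consistency condition $\langle\partial_x\phi_c,J^{-1}R_\mu\rangle=c_2$, where $R_\mu$ denotes your right-hand side. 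This overdetermined system is consistent here only because $\Gamma_c$, $B(c)$, $D(c)$ were reverse-engineered by the authors from the explicit choice above, and verifying that consistency is tantamount to redoing the paper's computation. If one does carry your program out, it recovers the paper's answer: with the paper's values $c_3=c^2$, $c_4=c$ the mean-zero condition factors as $\big(\mu B(c)-1\big)K=0$ with $K=c^2\int_\R\Psi_c\dx+\tfrac{c}{2}\int_\R\phi_c\dx$ (using $\int_\R x\partial_x\phi_c\dx=-\int_\R\phi_c\dx$), so $\mu=1/B(c)$, $R_\mu=\tfrac{D(c)}{B(c)}\partial_x(x^3\phi_c)$, and $f_c\propto(1-\partial_x^2)(x^3\phi_c)$; but this in turn needs the unverified scalar facts $K\neq0$ and $B(c)\neq0$. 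So the strategy is salvageable, yet as written it neither closes the decisive non-degeneracy nor notices the hidden normalization constraint, whereas the paper's argument is a direct one-line verification reducing to \eqref{second-orth-condition}.
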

This claim is established by choosing $f_\lambda=(1-\partial_x^2)(x^3\phi_{\lambda})$  as presented in the assumption \eqref{second-orth-condition}. Due to the complexity of the expression $\Upsilon_{\lambda}$, we decide to check it by numerical experiments. Suppose the claim is true, then we choose $\psi_2=S''_{\lambda}(\phi_{\lambda})\Upsilon_{\lambda}$ and thus obtain the form of $I'(t)$ in \eqref{idealform} as we expected. 
 This constitutes the fourth key ingredient in our proof.

{\it Step 7: Contradiction.} 
Based on the works before, the structure of monotonicity becomes clear. Indeed, 
$$I'(t)= \beta(u_{0})+\gamma(\lambda)+O(\|\xi\|^2_{H^1}). $$
Then using \eqref{control-xi}, we can infer that 
$$
  I'(t)\ge \beta(u_{0})+
\frac12 C(\lambda-c)^{2}.
$$
The positivity of $\beta(u_{0})$ can be verified by suitably choosing initial data $u_0. $
Thus we establish the monotonicity of $I(t).$ 
The contradiction between uniformly boundedness and monotonicity of $I(t)$ proves the instability in the end.

 \subsection{Organization of the paper}
The remainder of the paper is organized as follows. In Section \ref{2}, we provide some preliminaries. In Section \ref{3}, we establish the coercivity of the Hessian $S_\lambda''(\phi_{\lambda})$ and control the modulation parameters. In Section \ref{4}, we demonstrate the localized virial identities and define the monotonicity functional. In Section \ref{5}, we establish the monotonicity of the functional obtained in Section \ref{4} and prove the main theorem. Finally, in Appendix \ref{appendix}, we present a general coercivity property of the Hessian of the action $S_c''(\phi_{c})$ and the numerical result of the negative eigenfunction of $S_c''(\phi_{c})$.
 
  \section{Notations}
  \label{2}
  \subsection{Notations}
    For 
  $f, g\in \L2, $
  we define
  $$
  \langle f,g\rangle =\int_{\R}f(x)g(x)\dx
  $$
  and regard $\L2$ as a real Hilbert Space. 
  For a function $f(x)$, its $L^{q}$-norm
  $\|f\|_{q} =\left(\int_{\R}|f(x)|^{q} \dx\right)^{\frac{1}{q}}$
  and its $H^{1}$-norm 
  $\|f\|_{H^{1}}=(\|f\|^{2}_{L^{2}}+\|\partial_x f\|^{2}_{L^{2}})^\frac{1}{2}.$

    Further, we write $X\lesssim Y$ or $Y\lesssim X$ to indicate $X\leqslant CY$ for some constant $C>0. $
    We use the notation $X\sim Y$ to denote $X\lesssim Y\lesssim X. $
    We also use $O(Y)$ to denote any quantity $X$ such that $|X|\lesssim Y $ and use $o(Y)$ to denote any quantity $X$ such that $X/Y\rightarrow 0$ if $Y\rightarrow 0. $
    Throughout the whole paper, the letter $C$ will denote various positive constants which are of no importance in our analysis.
\subsection{Some basic definitions and properties}
In the rest of this paper, we consider the case of $p>4,$ and 
 $
 c=c_{0}(p).
 $
 Recall the expression of conserved equality and the functional $S_{c} $, we have
  \begin{align} 
  	E(u)
  &=\frac{1}{2}\int_{\mathbb{R}} u^{2}\dx+\frac{1}{p+2}\int_{\mathbb{R}}|u|^{p+2} \dx,
  \notag\\
  	Q(u)
  	&=\frac{1}{2}\int_{\mathbb{R}}\big( u^{2}+u_{x}^{2}\big) \dx;
  \notag	\\
  	\label{2.5}
  	S_{c}(u)&=E(u)-cQ(u).
  \end{align}
Taking derivative, then we have
\begin{align}
\label{2.6}
	E'(u)
&=u+|u|^{p}u,\\
\label{2.7}
	Q'(u)
&=u-\partial_{xx}u,\\
S_{c}'(u)
&=E'(u)-cQ'(u)=c\partial_{xx} u +(1-c)u+|u|^{p}u.\notag
\end{align}
Note that 
$
S_{c}'(\phi_{c})=0. 
$
Moreover, for the real-valued function $f, $ a direct computation shows
\begin{align}
	\label{2.8}
	S_{c}''(\phi_{c})f
	=c\partial_{xx} f+(1-c)f+(p+1)\phi_{c}^{p}f.
\end{align}
Taking the derivative of 
$S_{c}'\big(\phi_{c}(\cdot-x)\big)=0 $ with respect to $x$ gives
\begin{align}
	\label{2.9}
	S_{c}''(\phi_{c})(\partial_x \phi_{c})=0.
\end{align}
For any function $\xi, \eta, $ we have
\begin{align}
	\label{2.10}
	\langle S_{c}''(\phi_{c})\xi, \eta\rangle 
=\langle S_{c}''(\phi_{c})\eta, \xi\rangle.
\end{align}
Moreover, taking the derivative of 
$S_{c}'(\phi_{c})=0 $ with respect to $c$ gives
\begin{align}
	\label{2.11}
	S_{c}''(\phi_{c})\partial_{c}\phi_{c}
=Q'(\phi_{c}).
\end{align}

  Next, we give some basic properties on the momentum, energy and the functional $S_c$.
  \begin{lem}
  	\label{lem2.1}
  	Let $ c=c_{0}(p); $ then the following equality holds:
  \begin{align*}
  	\partial_{c}Q\left(\phi_{c}\right)\Big|_{c=c_{0}(p)}=0.
  \end{align*}
  \end{lem}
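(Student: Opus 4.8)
The plan is to deduce the lemma from the very definition of the critical frequency, namely the relation $d''(c)\big|_{c=c_0(p)}=0$, by first establishing the classical Grillakis--Shatah--Strauss identity $d'(c)=-Q(\phi_c)$. To this end I would start from $d(c):=E(\phi_c)-cQ(\phi_c)=S_c(\phi_c)$ (recall \eqref{2.5}) and differentiate in $c$, applying the chain rule to the $C^1$ functionals $E$ and $Q$. This produces
\[
d'(c)=\langle E'(\phi_c),\partial_c\phi_c\rangle - Q(\phi_c) - c\,\langle Q'(\phi_c),\partial_c\phi_c\rangle,
\]
where the term $-Q(\phi_c)$ comes from differentiating the explicit factor $c$.

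Next I would collect the two inner-product terms and use the fact that $\phi_c$ is a critical point of $S_c$. Since $S_c'(\phi_c)=E'(\phi_c)-cQ'(\phi_c)=0$, we get $\langle E'(\phi_c)-cQ'(\phi_c),\partial_c\phi_c\rangle=\langle S_c'(\phi_c),\partial_c\phi_c\rangle=0$, and therefore $d'(c)=-Q(\phi_c)$. Differentiating once more gives the clean formula $d''(c)=-\partial_c Q(\phi_c)$. Evaluating at the critical frequency and invoking the defining relation $d''(c)\big|_{c=c_0(p)}=0$ yields $\partial_c Q(\phi_c)\big|_{c=c_0(p)}=-d''(c_0(p))=0$, which is exactly the assertion of the lemma.

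The only genuinely nontrivial point is to make the above differentiations rigorous, i.e.\ to justify that the map $c\mapsto\phi_c$ is differentiable with values in $H^1(\R)$ so that the chain rule and differentiation under the integral are legitimate. I would obtain this from the implicit function theorem applied to the elliptic profile equation \eqref{1.3}: after fixing the translation so that one works in the even subspace, the linearized operator $S_c''(\phi_c)$ is invertible there (its only kernel direction $\partial_x\phi_c$ is odd), which gives a $C^1$ branch $c\mapsto\phi_c$ and identifies $\partial_c\phi_c$ as the solution of the linearized equation; this is precisely the content of \eqref{2.11}. Alternatively, one may bypass the abstract argument altogether by using the explicit scaling relation \eqref{1.4}--\eqref{1.5}, which expresses $\phi_c$ in terms of the fixed profile $\psi_\omega$ with $\omega=c^{-1/2}$ and hence makes the $c$-dependence—and thus the differentiability—fully explicit. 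Once this regularity is in place, the remainder of the argument is the short computation above, so I expect no further obstacle.
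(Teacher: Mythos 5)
Your argument is correct as far as it goes, but it takes a genuinely different route from the paper, and the difference matters. You reduce the lemma to two ingredients: the Grillakis--Shatah--Strauss identity $d'(c)=-Q(\phi_c)$ (which you derive correctly from $S_c'(\phi_c)=0$, so that $d''(c)=-\partial_c Q(\phi_c)$), and the assertion $d''(c)\big|_{c=c_0(p)}=0$ quoted from the introduction. The paper instead gives a self-contained computation: testing the profile equation \eqref{1.3} against $\phi_c$ and $x\partial_x\phi_c$ yields the Pohozaev identities \eqref{2.1}, the scaling relation \eqref{2.3} gives $\partial_c\|\phi_c\|_{L^2}^2$ explicitly in \eqref{2.4}, and these combine into the closed formula \eqref{partial_c_Q(phi_c)}, whose numerator $8(p+2)c^2-8pc-p^2$ is then checked to vanish at the explicit value $c_0(p)=\frac{p}{4+2p}\bigl(1+\sqrt{2+\tfrac12 p}\bigr)$. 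The caveat for your approach is that the paper \emph{defines} $c_0(p)$ by this explicit formula (note the ``$:=$''), and the statement that this particular number is the degenerate point of $d''$ is exactly the analytic content of the lemma; since $d''(c)=-\partial_c Q(\phi_c)$ is a one-line identity, invoking ``$d''(c_0(p))=0$'' to prove ``$\partial_c Q(\phi_c)\big|_{c=c_0(p)}=0$'' is essentially a restatement, and the verification is deferred to the cited literature \cite{Strauss-1990} rather than carried out. So your proof stands if one accepts the characterization of $c_0(p)$ as known background, and it is conceptually cleaner in that it explains \emph{why} the lemma holds (it is precisely the degeneracy condition); but it buys this brevity at the cost of self-containedness. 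The paper's computation also pays for itself later: the identities \eqref{2.1} and \eqref{2.4} established inside this proof are reused repeatedly (in Lemma \ref{lem7.1}, Lemma \ref{lem7.4}, Lemma \ref{gamma}, and Lemma \ref{lem7.6}), so the explicit route is not wasted effort. Your remarks on justifying differentiability of $c\mapsto\phi_c$ (via the implicit function theorem on the even subspace, or directly from the scaling relations \eqref{1.4}--\eqref{1.5}) are sound and consistent with how the paper implicitly handles this point.
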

\begin{proof}
	Note that
\begin{align*}
	Q(u)
	=\frac{1}{2}\int_{\mathbb{R}} u^{2}+u_{x}^{2} \dx. 
\end{align*}
Taking inner product of \eqref{1.3} and
$\phi_{c}, x\partial_{x}\phi_{c}$ respectively, by integration-by-parts, we can get
\begin{align*}
&c\big\|\partial_{x}\phi_{c}\big\|^{2}_{L^{2}}+(c-1)\big\|\phi_{c}\big\|^{2}_{L^{2}}-\big\|\phi_{c}\big\|^{p+2}_{L^{p+2}}=0,\\
&c\big\|\partial_{x}\phi_{c}\big\|^{2}_{L^{2}}-(c-1)\big\|\phi_{c}\big\|^{2}_{L^{2}}+\frac{2}{p+2}\big\|\phi_{c}\big\|^{p+2}_{L^{p+2}}=0.
\end{align*}
This gives that 
\begin{align}
	\label{2.1}
\big\|\phi_{c}\big\|^{p+2}_{L^{p+2}}
=\frac{2(p+2)(c-1)}{p+4}\big\|\phi_{c}\big\|^{2}_{L^{2}};
\quad
\big\|\partial_{x}\phi_{c}\big\|^{2}_{L^{2}}
=\frac{p(c-1)}{(p+4)c}\big\|\phi_{c}\big\|^{2}_{L^{2}}.
\end{align}
 This further yields that 
$$
Q(\phi_{c})
=\frac{1}{2} \left[1+\frac{p(c-1)}{(p+4)c} \right]\big\| \phi_{c}\big\| ^{2}_{L^{2}}.
$$
By scaling, we find
\begin{align}
	\label{2.3}
	\big\| \phi_{c}\big\| ^{2}_{L^{2}}
&=c^{\frac{1}{2}}(c-1)^{\frac{2}{p}-\frac{1}{2}}\big\|\psi_{0}\big\|^{2}_{L^{2}},
\end{align}
where $\psi_0$ is the solution of 
\begin{align*}
	-\partial_{xx}\psi_0+\psi_0-\psi_0^{p+1}=0.
\end{align*}
Hence,
\begin{align}
	\label{2.4}
	\partial_{c}\big\| \phi_{c}\big\| ^{2}_{L^{2}}
=\frac{4c-p}{2pc(c-1)}\big\| \phi_{c}\big\| ^{2}_{L^{2}}.
\end{align}
 By a straightforward computation, we have
 \begin{align}
 	\label{partial_c_Q(phi_c)}
 	 \partial_{c} Q(\phi_{c})
 	=\frac{8(p+2)c^{2}-8pc-p^{2}}{4p(p+4)c^2(c-1)}
 	\big\|\phi_{c}\big\|^{2}_{L^{2}}. 
 \end{align}
 Finally, we substitute 
 $
c=c_{0}(p)
 $
 into the equality above, and thus we complete the proof.
\end{proof}

Then a consequence of Lemma \ref{lem2.1} is 
\begin{cor}
	\label{cor2.2}
	Let $\lambda>1, c=c_{0}(p)$, then
\begin{align*}
	S_{\lambda}(\phi_{\lambda})-S_{\lambda}(\phi_{c})
	=o((\lambda-c)^{2}).
\end{align*}
\end{cor}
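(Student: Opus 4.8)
\textbf{Proof proposal for Corollary \ref{cor2.2}.}

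The plan is to Taylor expand the map $\lambda \mapsto S_{\lambda}(\phi_{\lambda})$ around $\lambda = c$ and show that both the zeroth-order coefficient matches $S_\lambda(\phi_c)$ up to order $o((\lambda-c)^2)$ and the first-order term vanishes, so that the difference is controlled by the second derivative, which we will show is itself $o(1)$ at $c=c_0(p)$ by invoking the degeneracy $d''(c)=0$. First I would write $d(\lambda) := S_\lambda(\phi_\lambda) = E(\phi_\lambda) - \lambda Q(\phi_\lambda)$ and recall the standard identity $d'(\lambda) = -Q(\phi_\lambda)$, which follows from differentiating and using $S_\lambda'(\phi_\lambda) = 0$ together with $\partial_\lambda S_\lambda = -Q$. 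On the other hand, the quantity $S_\lambda(\phi_c)$ as a function of $\lambda$ (with $\phi_c$ fixed) is \emph{linear} in $\lambda$, namely $S_\lambda(\phi_c) = E(\phi_c) - \lambda Q(\phi_c)$, so its derivative in $\lambda$ is the constant $-Q(\phi_c)$.

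Next I would compare the two expressions at $\lambda = c$. They agree to zeroth order since both equal $S_c(\phi_c)$ when $\lambda=c$. For the first-order term, the derivative of $S_\lambda(\phi_\lambda)$ is $-Q(\phi_\lambda)$ and the derivative of $S_\lambda(\phi_c)$ is $-Q(\phi_c)$; at $\lambda=c$ these coincide, so the difference $S_\lambda(\phi_\lambda) - S_\lambda(\phi_c)$ has vanishing value and vanishing first derivative at $\lambda=c$. It therefore remains to examine the second-order behavior. The second derivative of $S_\lambda(\phi_c)$ in $\lambda$ is zero (linearity), while the second derivative of $S_\lambda(\phi_\lambda) = d(\lambda)$ is $d''(\lambda) = -\partial_\lambda Q(\phi_\lambda)$. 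Thus
\begin{align*}
\frac{d^2}{d\lambda^2}\Big[S_\lambda(\phi_\lambda) - S_\lambda(\phi_c)\Big]\bigg|_{\lambda=c} = -\,\partial_\lambda Q(\phi_\lambda)\big|_{\lambda=c} = -\,\partial_c Q(\phi_c),
\end{align*}
which vanishes precisely by Lemma \ref{lem2.1} at $c = c_0(p)$. Hence the difference has zero value, zero first derivative, and zero second derivative at $\lambda=c$, giving the desired $o((\lambda-c)^2)$ by Taylor's theorem.

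The main technical point to be careful about is the regularity and smooth dependence of $\phi_\lambda$ on $\lambda$, which is needed to justify the differentiation under the functional and the Taylor expansion; this is standard for ground states of \eqref{1.3} and the exponential decay bounds stated in the introduction ensure all the $L^2$ and $L^{p+2}$ integrals and their $\lambda$-derivatives are finite and continuous. A cleaner way to present the conclusion, avoiding any appeal to a third-order remainder, is to write the difference exactly via the integral form of Taylor's theorem and note that the integrand involves $d''(\mu)=-\partial_\mu Q(\phi_\mu)$ for $\mu$ between $c$ and $\lambda$; since $\partial_\mu Q(\phi_\mu)$ is continuous and vanishes at $\mu = c$ by Lemma \ref{lem2.1}, the integral is $o((\lambda-c)^2)$. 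The only genuine input beyond bookkeeping is Lemma \ref{lem2.1}, so I expect no serious obstacle.
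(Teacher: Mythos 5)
Your proposal is correct, but it takes a genuinely different route from the paper's. You work entirely with scalar functions of $\lambda$: setting $d(\lambda)=S_\lambda(\phi_\lambda)$, you invoke the classical identity $d'(\lambda)=-Q(\phi_\lambda)$ (a consequence of $S_\lambda'(\phi_\lambda)=0$ and the chain rule), observe that $\lambda\mapsto S_\lambda(\phi_c)$ is affine with slope $-Q(\phi_c)$, and conclude that $F(\lambda):=S_\lambda(\phi_\lambda)-S_\lambda(\phi_c)$ satisfies $F(c)=F'(c)=0$ and $F''(c)=-\partial_\lambda Q(\phi_\lambda)\big|_{\lambda=c}=0$ by Lemma \ref{lem2.1}; one-variable Taylor expansion then yields $o\big((\lambda-c)^2\big)$. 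The paper instead expands in the function space: it writes the difference as $S_c(\phi_\lambda)-S_c(\phi_c)-(\lambda-c)\big[Q(\phi_\lambda)-Q(\phi_c)\big]$ (equation \eqref{2.14}), Taylor-expands $S_c$ around $\phi_c$ in $H^1$ using $S_c'(\phi_c)=0$, approximates $\phi_\lambda-\phi_c$ by $(\lambda-c)\partial_c\phi_c$, and then uses the Hessian identity $S_c''(\phi_c)\partial_c\phi_c=Q'(\phi_c)$ (equation \eqref{2.11}) to reduce both resulting terms to $\partial_cQ(\phi_c)\big|_{c=c_0(p)}=0$, which is again Lemma \ref{lem2.1}. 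The two proofs thus hinge on exactly the same input — Lemma \ref{lem2.1}, i.e.\ the degeneracy $d''(c_0(p))=0$ that defines the critical frequency — but yours is shorter and more elementary: it bypasses the $H^1$ Taylor expansion, the expansion of $\phi_\lambda-\phi_c$, and the identity \eqref{2.11} altogether, and it makes transparent that the corollary is nothing but the degeneracy of $d$ in disguise. The regularity you flag (that $\lambda\mapsto\phi_\lambda$ is $C^1$ into $H^1$, so that $F''$ exists and is continuous near $c$) is indeed the only thing to justify, and it is harmless here: the paper's explicit formulas \eqref{2.3} and \eqref{partial_c_Q(phi_c)} show $\partial_\lambda Q(\phi_\lambda)$ is an explicit continuous function of $\lambda$, and the paper's own proof needs the same smooth dependence. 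What the paper's longer route buys is that its intermediate facts — the identity \eqref{2.11} and the flatness $\langle S_c''(\phi_c)\partial_c\phi_c,\partial_c\phi_c\rangle=0$ — are reused later in the argument, for instance in the proof of Proposition \ref{prop3.1}, so the corollary's proof doubles as setup for the modulation analysis.
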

\begin{proof}
	From the definition of $S_{c}$ in \eqref{2.5}, we have
\begin{align}
	\label{2.14}
	S_{\lambda}(\phi_{\lambda})-S_{\lambda}(\phi_{c})
	=S_{c}(\phi_{\lambda})-S_{c}(\phi_{c})
	  -(\lambda-c)\big[Q(\phi_{\lambda})-Q(\phi_{c})\big]. 
\end{align}
Recall that 
$
S_{c}'(\phi_{c})=0, 
$
then we use Taylor's expansion to calculate
\begin{align}
	\label{2.15}
	S_{\lambda}(\phi_{\lambda})-S_{\lambda}(\phi_{c})
	=\frac{1}{2}\langle S_{c}''(\phi_{c})(\phi_{\lambda}-\phi_{c}),  (\phi_{\lambda}-\phi_{c})\rangle 
	-(\lambda-c)\big[Q(\phi_{\lambda})-Q(\phi_{c})\big]
	+o((\lambda-c)^{2}). 
\end{align}
Note that
\begin{align*}
	\phi_{\lambda}-\phi_{c}
	=(\lambda-c)\partial_{c} \phi_{c}+o(\lambda-c), 
\end{align*}
then we find
\begin{align*}
	\langle S_{c}''(\phi_{c})(\phi_{\lambda}-\phi_{c}),  (\phi_{\lambda}-\phi_{c})\rangle 
    &=(\lambda-c)^{2}\langle S_{c}''(\phi_{c})\partial_{c} \phi_{c}, \partial_{c}\phi_{c}\rangle +o((\lambda-c)^{2})\\
    &=(\lambda-c)^{2}
    \langle Q'(\phi_{c}), \partial_{c}\phi_{c}\rangle 
    +o((\lambda-c)^{2})\\
    &=(\lambda-c)^{2}\cdot\partial_{c}
    Q(\phi_{c})\big|_{c=c_{0}(p)}
     +o\big((\lambda-c)^{2}\big),
\end{align*}
where we used \eqref{2.11} in the second step. Using Lemma \ref{lem2.1}, we have
\begin{align*}
\partial_{c}Q(\phi_{c})|_{c=c_{0}(p)}=0. 
\end{align*}
Hence, 
\begin{align*}
	Q(\phi_{\lambda})-Q(\phi_{c})=o(\lambda-c), 
\end{align*}
and
\begin{align*}
	\langle S_{c}''(\phi_{c})(\phi_{\lambda}-\phi_{c}),  (\phi_{\lambda}-\phi_{c})\rangle 
	=o\left((\lambda-c)^{2}\right). 
\end{align*} 
Taking these two results into \eqref{2.15}, we obtain the desired estimate. 
\end{proof}


 The next lemma gives pairs of pre-image and image of $S_{c}''(\phi_{c}). $
 \begin{lem}
 	\label{lem3.3}
It holds that 
 \begin{align}
\label{3.5}
S_{c}''(\phi_{c})(x\partial_{x}\phi_{c})
&=2c\partial_{xx}\phi_{c}.
 \end{align}
Moreover,  let $\psi_{\omega}$ be defined in \eqref{1.4}, and denote that
 $$\Psi_{c}=-\frac{1}{2}\omega^{1-\frac{2}{p}}\partial_{\omega} \psi_{\omega}.
 $$
 Then 
 \begin{align}
\label{3.6}
S_{c}''(\phi_{c})\Psi_{c}
&=\phi_{c}.
 \end{align}
 \end{lem}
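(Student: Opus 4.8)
The plan is to verify both identities by direct computation, exploiting the explicit formula \eqref{2.8} for the Hessian together with the ground-state equation \eqref{1.3}. The key principle throughout is that any identity of the form $S_c''(\phi_c)\Xi = \Theta$ can be checked by applying the explicit differential operator $S_c''(\phi_c)f = c\partial_{xx}f + (1-c)f + (p+1)\phi_c^p f$ to the candidate pre-image $\Xi$ and simplifying with \eqref{1.3}, namely $-c\partial_{xx}\phi_c + (c-1)\phi_c - \phi_c^{p+1} = 0$.

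For \eqref{3.5}, first I would apply \eqref{2.8} to $x\partial_x\phi_c$. The computation of $c\partial_{xx}(x\partial_x\phi_c)$ produces, via the product rule, a term $cx\partial_{xxx}\phi_c$ plus lower-order commutator terms $2c\partial_{xx}\phi_c$. The remaining pieces are $(1-c)x\partial_x\phi_c + (p+1)\phi_c^p \cdot x\partial_x\phi_c$. I would then recognize that the $x$-weighted terms assemble into $x\partial_x$ applied to the ground-state equation: differentiating \eqref{1.3} in $x$ gives $-c\partial_{xxx}\phi_c + (c-1)\partial_x\phi_c - (p+1)\phi_c^p\partial_x\phi_c = 0$, so multiplying by $x$ and combining shows that all the $x$-weighted contributions cancel, leaving exactly $2c\partial_{xx}\phi_c$. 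This is the cleanest route and avoids any reference to scaling.

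For \eqref{3.6}, the strategy is to pass through the rescaled profile. Using the relation \eqref{1.4}, $\psi_\omega(x) = c^{-1/p}\phi_c(x)$ with $\omega = c^{-1/2}$, and the rescaled equation \eqref{1.5}, I would differentiate \eqref{1.5} with respect to $\omega$ to produce an identity for $\partial_\omega\psi_\omega$. Differentiating $-\partial_{xx}\psi_\omega + (1-\omega^2)\psi_\omega - \psi_\omega^{p+1} = 0$ in $\omega$ yields
\begin{align*}
-\partial_{xx}(\partial_\omega\psi_\omega) + (1-\omega^2)\partial_\omega\psi_\omega - (p+1)\psi_\omega^p\,\partial_\omega\psi_\omega = 2\omega\,\psi_\omega.
\end{align*}
The left-hand side is, after translating back to the $\phi_c$ variables through the scaling relations linking $S_c''(\phi_c)$ to the operator $-\partial_{xx} + (1-\omega^2) - (p+1)\psi_\omega^p$, precisely the action of the Hessian on $\partial_\omega\psi_\omega$. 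Tracking the powers of $\omega = c^{-1/2}$ introduced by \eqref{1.4}, I expect the prefactor $-\tfrac12\omega^{1-2/p}$ in the definition of $\Psi_c$ to be exactly what converts the right-hand side $2\omega\psi_\omega$ into $\phi_c$ and simultaneously normalizes the operator scaling so that $S_c''(\phi_c)\Psi_c = \phi_c$.

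The main obstacle I anticipate is the bookkeeping of the scaling factors in \eqref{3.6}: one must carefully convert between the operator $-\partial_{xx} + (1-\omega^2) - (p+1)\psi_\omega^p$ (natural for $\psi_\omega$) and $S_c''(\phi_c) = c\partial_{xx} + (1-c) + (p+1)\phi_c^p$ (natural for $\phi_c$), since these differ by an overall factor of $c$ and by the $c^{-1/p}$ scaling relating $\psi_\omega$ to $\phi_c$. Getting the exponent $1-\tfrac2p$ and the constant $-\tfrac12$ to land correctly is the delicate step; by contrast, \eqref{3.5} is a routine differentiation once the $x\partial_x$-derivative of \eqref{1.3} is exploited.
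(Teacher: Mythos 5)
Your proposal is correct and follows essentially the same route as the paper: \eqref{3.5} is obtained by applying \eqref{2.8} directly, extracting the commutator term $2c\partial_{xx}\phi_{c}$, and recognizing the remaining $x$-weighted terms as $x\partial_{x}$ of the ground-state equation \eqref{1.3}; and \eqref{3.6} is obtained by differentiating \eqref{1.5} in $\omega$ and using the operator identity $S_{c}''(\phi_{c})f=-cL_{\omega}f$ with $L_{\omega}=-\partial_{xx}+(1-\omega^{2})-(p+1)\psi_{\omega}^{p}$, exactly as in the paper. The scaling bookkeeping you flagged as delicate does work out as you expect: $S_{c}''(\phi_{c})\Psi_{c}=\tfrac{1}{2}\omega^{-1-\frac{2}{p}}\cdot 2\omega\psi_{\omega}=\omega^{-\frac{2}{p}}\psi_{\omega}=\phi_{c}$ (note the operators differ by a factor of $-c$, not $c$).
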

\begin{proof}
First, by the expression of $S_{c}''(\phi_{c})$ in \eqref{2.8},  we have
\begin{align*}
	S_{c}''(\phi_{c})(x\partial_{x}\phi_{c})
	&=c\partial_{xx}(x\partial_{x}\phi_{c})+(1-c)(x\partial_{x}\phi_{c})+(p+1)\phi_{c}^{p}(x\partial_{x}\phi_{c})\\
	&=2c\partial_{xx}\phi_{c}+x\partial_{x}(c\partial_{xx}\phi_{c}+(1-c)\phi_{c}+\phi_{c}^{p+1})\\
	&=2c\partial_{xx}\phi_{c}.
\end{align*}
So we obtain \eqref{3.5}.

Second, taking the derivative of \eqref{1.5} with respect to $\omega, $ we have
\begin{align}
	\label{3.7}
	-\partial_{xx}(\partial_{\omega} \psi_{\omega})
	+(1-\omega^{2})\partial_{\omega}\psi_{\omega}
	-(p+1)\psi_{\omega}^{p}\partial_{\omega}\psi_{\omega}
	=2\omega \psi_{\omega}.
\end{align}
Moreover, denoting that
\begin{align}
	\label{2.12}
	L_{\omega}f
	=-\partial_{xx} f+(1-\omega^{2})f-(p+1)\psi_{\omega}^{p}f, 
\end{align}
then we have that 
\begin{align}
	\label{2.13}
S_{c}''(\phi_{c})f=-cL_{\omega}f.
\end{align}
Using the expression of $S_{c}''(\phi_{c})$ in \eqref{2.13}, we have
\begin{align*}
	S_{c}''(\phi_{c})\Psi_{c}
   &=-cL_{\omega}\Psi_{c}\\
   &=\frac{1}{2}\omega^{-1-\frac{2}{p}}
       \big[-\partial_{xx}(\partial_{\omega} \psi_{\omega})
      +(1-\omega^{2})\partial_{\omega}\psi_{\omega}
      -(p+1)\psi_{\omega}^{p}\partial_{\omega}\psi_{\omega}\big]. 
\end{align*}
This combined with \eqref{3.7} and \eqref{1.4} gives
\begin{align*}
	S_{c}''(\phi_{c})\Psi_{c}
	=\omega^{-\frac{2}{p}}\psi_{\omega} =\phi_{c}. 
\end{align*}
Thus we obtain the desired results.
\end{proof}


\section{Modulation and dynamic of the parameter}
\label{3}
Under the assumption \eqref{second-orth-condition}, in order to obtain a contradiction, we assume that the solitary waves solution is stable, that is: for any $\varepsilon>0$, there exists $\delta>0$ such that when 
 \begin{align*}
 	\big\|u_{0}-\phi_{c}\big\|_{\H1}<\delta, 
 \end{align*}
 we have
 \begin{align}
 	\label{4.1}
 	u \in U_{\varepsilon}(\phi_{c}).
 \end{align}

 \begin{prop}
 	\label{prop3.1}
  Let $c=c_{0}(p). $
 	Suppose that $u(t) \in U_{\varepsilon}(\phi_{c})$ for any $t\in \R$. Then there exist $C^1$ functions 
 \begin{align*}
         y: \R \rightarrow \R, \quad \lambda: \R \rightarrow \R^{+}
     \end{align*}
     such that for 
     \begin{align}
         \label{xi}
         \xi(t)=u(t, \cdot +y(t))-\phi_{\lambda(t)}, 
     \end{align}
     the following orthogonality conditions hold: 
     \begin{align}
         \label{orth}
         \langle \xi, \partial_{x}\phi_{\lambda(t)}\rangle 
         =\langle \xi, \kappa_{\lambda(t)}\rangle 
         =0,
     \end{align}
 where 
 \begin{align} 		\label{kappa_c}
 \kappa_{\lambda}=S_\lambda''(\phi_{\lambda})\Gamma_{\lambda},
 \end{align} 
     and $\xi$ lies in the positive direction of $S_\lambda''(\phi_{\lambda}), $ that is, 
 \begin{align}
 	\label{posi}
 		\langle S_\lambda''(\phi_{\lambda})\xi, \xi\rangle \gtrsim  \|\xi\|^{2}_{H^{1}}.
 \end{align}
 	Furthermore, the following estimate holds:
 	\begin{align}
 		\label{vare}
 		\|\xi\|_{\H1}+|\lambda-c| \lesssim \varepsilon.
 	\end{align}
 \end{prop}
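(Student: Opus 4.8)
The plan is to construct the parameters $(y(t),\lambda(t))$ by the Implicit Function Theorem so as to enforce the two orthogonality conditions in \eqref{orth}, and then to deduce the coercivity \eqref{posi} from a spectral decomposition of $S_\lambda''(\phi_\lambda)$ together with the negativity assumption \eqref{second-orth-condition}. For $u$ close to $\phi_c$ in $H^1$ and parameters $(y,\lambda)$ near $(0,c)$, introduce the two functionals
\begin{align*}
G_1(u,y,\lambda)&=\langle u(\cdot+y)-\phi_\lambda,\ \partial_x\phi_\lambda\rangle,\\
G_2(u,y,\lambda)&=\langle u(\cdot+y)-\phi_\lambda,\ \kappa_\lambda\rangle,
\end{align*}
where $\kappa_\lambda=S_\lambda''(\phi_\lambda)\Gamma_\lambda$. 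At the base point $u=\phi_c$, $y=0$, $\lambda=c$ the perturbation $\xi$ vanishes, so $G_1=G_2=0$; the goal is to solve $G_1=G_2=0$ for $(y,\lambda)$ as $C^1$ functions of $u$.

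First I would compute the Jacobian $\partial_{(y,\lambda)}(G_1,G_2)$ at the base point. Since $\partial_y[u(\cdot+y)]=(\partial_x u)(\cdot+y)$ evaluates there to $\partial_x\phi_c$, and the terms carrying the factor $\xi$ drop out, the four entries reduce to inner products of derivatives of $\phi_c$. Two parity/kernel facts make the matrix diagonal: $\langle\partial_c\phi_c,\partial_x\phi_c\rangle=0$ because $\phi_c$ (hence $\partial_c\phi_c$) is even while $\partial_x\phi_c$ is odd, so $\partial_\lambda G_1=0$; and $\langle\partial_x\phi_c,\kappa_c\rangle=\langle S_c''(\phi_c)\partial_x\phi_c,\Gamma_c\rangle=0$ by \eqref{2.9} and the symmetry \eqref{2.10}, so $\partial_y G_2=0$. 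The diagonal entries are $\partial_y G_1=\|\partial_x\phi_c\|_{L^2}^2>0$ and, using \eqref{2.10} and \eqref{2.11},
\begin{align*}
\partial_\lambda G_2=-\langle\partial_c\phi_c,\kappa_c\rangle=-\langle S_c''(\phi_c)\partial_c\phi_c,\Gamma_c\rangle=-\langle Q'(\phi_c),\Gamma_c\rangle.
\end{align*}
It then remains to verify $\langle Q'(\phi_c),\Gamma_c\rangle\ne0$, which is a concrete quantity computable from the explicit form of $\Gamma_c$ in \eqref{Gamma_c} (consistently with the numerical verification used for \eqref{second-orth-condition}). With the Jacobian invertible, the Implicit Function Theorem produces $C^1$ functions $y(t),\lambda(t)$ with $(y,\lambda)(\phi_c)=(0,c)$ for which $\xi=u(\cdot+y)-\phi_\lambda$ satisfies \eqref{orth}. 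The bound \eqref{vare} follows from the Lipschitz dependence of $(y,\lambda)$ on $u$: since $u(t)\in U_\varepsilon(\phi_c)$, some translate of $u(t)$ lies within $\varepsilon$ of $\phi_c$ in $H^1$, so the constructed parameters satisfy $|\lambda-c|\lesssim\varepsilon$, and then $\|\xi\|_{H^1}\le\|u(\cdot+y)-\phi_c\|_{H^1}+\|\phi_c-\phi_\lambda\|_{H^1}\lesssim\varepsilon$, using $\|\phi_\lambda-\phi_c\|_{H^1}\lesssim|\lambda-c|$.

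Finally, the coercivity \eqref{posi} is the heart of the matter, and I would obtain it from the general spectral criterion recorded in the appendix (Proposition \ref{prop3.4}). By \cite{weinstein-1985-modulational}, the operator $S_c''(\phi_c)$ has exactly one negative eigenvalue, kernel $\R\,\partial_x\phi_c$, and essential spectrum bounded below by a positive constant. The two orthogonality conditions are tailored to this structure: $\langle\xi,\partial_x\phi_c\rangle=0$ eliminates the kernel direction, while $\langle\xi,\kappa_c\rangle=\langle\xi,S_c''(\phi_c)\Gamma_c\rangle=0$ together with the assumed negativity $\langle S_c''(\phi_c)\Gamma_c,\Gamma_c\rangle<0$ from \eqref{second-orth-condition} controls the negative direction, so the criterion yields $\langle S_c''(\phi_c)\xi,\xi\rangle\gtrsim\|\xi\|_{H^1}^2$. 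To pass from $c$ to nearby $\lambda$ I would use that $\phi_\lambda$, $\Gamma_\lambda$, $\kappa_\lambda$, the spectral gap, and the negativity all depend continuously on $\lambda$, so the coercivity persists with a uniform constant once $|\lambda-c|$ is small, i.e. for $\varepsilon$ small by \eqref{vare}. The main difficulty is precisely this coercivity step: because $d''(c)=0$ in the degenerate case there is no clean sign available from the usual direction $\partial_c\phi_c$, which forces the use of the nonstandard second direction $\kappa_\lambda$ and makes the argument depend on the numerically verified assumption \eqref{second-orth-condition}.
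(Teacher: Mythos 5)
Your architecture coincides with the paper's: modulation by the Implicit Function Theorem with the functionals $\langle\xi,\partial_x\phi_\lambda\rangle$ and $\langle\xi,\kappa_\lambda\rangle$ (this is exactly Proposition \ref{prop4.1} with $\tau_\lambda=\kappa_\lambda$), a triangular Jacobian whose invertibility reduces to $\langle\partial_c\phi_c,\kappa_c\rangle=\langle Q'(\phi_c),\Gamma_c\rangle\neq 0$, coercivity from the spectral criterion of Proposition \ref{prop3.4}, and \eqref{vare} from Lipschitz dependence of the parameters on $u$. However, there is a genuine gap at the Jacobian step: you write that it ``remains to verify'' $\langle Q'(\phi_c),\Gamma_c\rangle\neq 0$ and propose to settle it by a numerical computation alongside \eqref{second-orth-condition}. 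The proposition assumes \emph{only} \eqref{second-orth-condition}; adding a second numerically verified hypothesis changes the statement being proved, so as written your proof is incomplete.

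The paper closes exactly this point analytically, and the argument is the one idea you are missing. By \eqref{2.11} and Lemma \ref{lem2.1}, the degeneracy at $c=c_0(p)$ gives $\langle S_c''(\phi_c)\partial_c\phi_c,\partial_c\phi_c\rangle=\partial_c Q(\phi_c)\big|_{c=c_0(p)}=0$, so $\eta=\partial_c\phi_c$ is a direction on which the quadratic form is nonpositive and which is not in $\ker S_c''(\phi_c)=\{\alpha\,\partial_x\phi_c\}$ (it is even and nonzero, while the kernel is odd). Corollary \ref{cor:a.4} --- a Cauchy--Schwarz argument in the spectral decomposition, exploiting that $S_c''(\phi_c)$ has exactly one negative eigenvalue --- then asserts that any $\zeta$ with $\langle S_c''(\phi_c)\zeta,\eta\rangle=0$ must satisfy $\langle S_c''(\phi_c)\zeta,\zeta\rangle\geq 0$. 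Hence, if $\langle\partial_c\phi_c,\kappa_c\rangle=\langle S_c''(\phi_c)\Gamma_c,\partial_c\phi_c\rangle$ were zero, taking $\zeta=\Gamma_c$ would force $\langle S_c''(\phi_c)\Gamma_c,\Gamma_c\rangle\geq 0$, contradicting \eqref{second-orth-condition}. So the nondegeneracy you need is a \emph{consequence} of the assumed negativity together with the degeneracy $d''(c_0(p))=0$; no further computation or hypothesis is required (the very degeneracy that kills the classical Lyapunov argument is what makes the modulation Jacobian invertible here). The remainder of your proof is consistent with the paper: coercivity via Proposition \ref{prop3.4} with $\tau_\lambda=\kappa_\lambda$, where your explicit continuity-in-$\lambda$ remark is, if anything, more careful than the paper's terse invocation of \eqref{Assume-Coer} for $\Gamma_\lambda$, and the bound \eqref{vare} as you describe.
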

\begin{proof}
From Proposition \ref{prop4.1}, we first verify $\kappa_{c}$ satisfying \eqref{Assume-Modulation}.
By \eqref{2.11} and Lemma \ref{lem2.1}, we have
\begin{align*}
	\langle S''_c(\phi_{c})\partial_c\phi_{c}, \partial_c\phi_{c}\rangle=\frac{d}{dc}Q(\phi_{c})\Big|_{c=c_0(p)}=0.
\end{align*}
 Then by Corollary \ref{cor:a.4}, we obtain 
\begin{align*}
	\langle\partial_{c}\phi_{c}, \kappa_{c}\rangle
\neq 0.
\end{align*}
Therefore, there exists $\varepsilon_{0}>0 $ such that for 
$\varepsilon \in (0, \varepsilon_{0}), 
u\in U_{\varepsilon}(\phi_{c}), $
there exists unique $C^{1}$-functions
\begin{align*}
	y: U_{\varepsilon}(\phi_{c}) \rightarrow \R, \quad
	\lambda:  U_{\varepsilon}(\phi_{c}) \rightarrow \R^{+}, 
\end{align*}
such that
\begin{align*}
	\langle \xi, \partial_{x}\phi_{\lambda}\rangle 
	=\langle \xi, \kappa_{\lambda}\rangle 
	=0.
\end{align*}
By \eqref{second-orth-condition}, we have that $\Gamma_{\lambda}$ satisfying \eqref{Assume-Coer}.
From Proposition \ref{prop3.4}, we obtain \eqref{posi}.
Furthermore, 
\begin{align*}
	\left(
	\begin{array}{cccc}
		\partial_{u}\lambda & \partial_{v}\lambda \\ 
		\partial_{u}y & \partial_{v}y
	\end{array}
	\right) 
	=J^{-1}
	\left(
	\begin{array}{cccc}
		\partial_{u}F_{1} & \partial_{v}F_{1} \\ 
		\partial_{u}F_{2} & \partial_{v}F_{2}
	\end{array}
	\right). 
\end{align*}
This implies that
\begin{align*}
	|\lambda-c| \lesssim \big\|u-\phi_{c}\big\|_{\H1}<\varepsilon.
\end{align*}
This finishes the proof of the proposition.
\end{proof}

Some consequences of Proposition \ref{prop3.1} are the follows. 
The first one is the rough estimate on $\dot y$ and $\dot \lambda. $
\begin{cor}
	\label{cor:5.1}
	Let $u$ be the solution of \eqref{1.1} with $u\in U_{\varepsilon}(\phi_{c}), $ where $\varepsilon$ 
	is obtained in Proposition \ref{prop3.1}.
	Let $y, \lambda, \xi$ be the parameters and function obtained in Proposition \ref{prop3.1}, then
	\begin{align*}
		\dot{y} -\lambda
		=O\left(\|\xi\|_{\H1}\right)
	\end{align*}
	and
	\begin{align*}
		\dot{\lambda}=O\left(\|\xi\|_{\H1}\right).
	\end{align*}
\end{cor}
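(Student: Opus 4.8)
The plan is to derive the rough estimates on $\dot{y}$ and $\dot{\lambda}$ by differentiating the two orthogonality conditions \eqref{orth} in time. Since $u$ solves \eqref{1.1}, i.e., $\partial_t u = JE'(u)$, and $\xi(t)=u(t,\cdot+y(t))-\phi_{\lambda(t)}$, a direct computation gives the evolution equation for $\xi$ in the moving frame:
\begin{align*}
\dot{\xi} = -\dot{\lambda}\,\partial_\lambda\phi_\lambda + \dot{y}\,\partial_x(\phi_\lambda+\xi) + JE'(\phi_\lambda+\xi).
\end{align*}
First I would substitute this into the time-derivatives of the two conditions $\langle \xi,\partial_x\phi_\lambda\rangle=0$ and $\langle \xi,\kappa_\lambda\rangle=0$. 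Differentiating each produces a $2\times 2$ linear system for the unknowns $(\dot{y}-\lambda,\dot{\lambda})$, where the leading coefficient matrix is obtained by pairing $\partial_x\phi_\lambda$ and $\kappa_\lambda$ against $\partial_x\phi_\lambda$ and $\partial_\lambda\phi_\lambda$.

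Second, I would verify that this coefficient matrix is invertible with inverse bounded uniformly for $\lambda$ near $c$. The key nondegeneracy inputs are already available in the excerpt: the diagonal pairing $\langle\partial_x\phi_\lambda,\partial_x\phi_\lambda\rangle>0$, and the crucial transversality fact that $\langle\partial_c\phi_c,\kappa_c\rangle\neq 0$, which is exactly what was used in the proof of Proposition~\ref{prop3.1} to run the Implicit Function Theorem. Because $\lambda\mapsto\kappa_\lambda,\,\partial_\lambda\phi_\lambda,\,\partial_x\phi_\lambda$ are continuous and the determinant is nonzero at $\lambda=c$, it stays bounded away from zero for $\lambda$ in a small neighborhood, so the matrix is uniformly invertible on $U_\varepsilon(\phi_c)$ after shrinking $\varepsilon$.

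Third, I would bound the forcing terms on the right-hand side of the linear system. The terms that do not vanish by the orthogonality relations or by the equation $S_\lambda'(\phi_\lambda)=0$ are all of the form $\langle JE'(\phi_\lambda+\xi)-JE'(\phi_\lambda),\,g_\lambda\rangle$ plus terms already proportional to $\xi$ through the translation piece $\dot{y}\,\partial_x\xi$. Using $E'(\phi_\lambda+\xi)-E'(\phi_\lambda)=O(\xi)$ in the relevant norm, the smoothness and exponential decay of $\phi_\lambda$ (and hence of $g_\lambda\in\{\partial_x\phi_\lambda,\kappa_\lambda\}$ and its derivatives), and integration by parts to move $J=-(1-\partial_x^2)^{-1}\partial_x$ onto the smooth test function, every forcing term is seen to be $O(\|\xi\|_{H^1})$. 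Solving the uniformly invertible system then yields both $\dot{y}-\lambda=O(\|\xi\|_{H^1})$ and $\dot{\lambda}=O(\|\xi\|_{H^1})$ simultaneously.

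The main obstacle I anticipate is the careful bookkeeping required by the non-onto operator $J=-(1-\partial_x^2)^{-1}\partial_x$: unlike a standard NLS or Klein-Gordon setting, one cannot simply pair against the test functions and read off cancellations, and one must be attentive to which terms are genuinely first-order in $\xi$ versus those that appear to be zeroth-order but cancel against the profile equation $S_\lambda'(\phi_\lambda)=0$. Ensuring that no hidden $O(1)$ contribution survives—so that the right-hand side is truly $O(\|\xi\|_{H^1})$ rather than $O(1)$—is the delicate point; the rest is a routine application of the Implicit Function Theorem machinery already set up in Proposition~\ref{prop3.1}.
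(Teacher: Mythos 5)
Your proposal is correct and follows essentially the same route as the paper: insert the ansatz $u=(\phi_\lambda+\xi)(\cdot-y)$ into $\partial_t u=JE'(u)$, exploit $S_\lambda'(\phi_\lambda)=0$ (equivalently $JE'(\phi_\lambda)=-\lambda\partial_x\phi_\lambda$) so that the unknowns organize as $(\dot\lambda,\dot y-\lambda)$, pair against $\partial_x\phi_\lambda$ and $\kappa_\lambda$ while differentiating the orthogonality conditions in time, and invert the resulting $2\times 2$ system whose leading matrix is anti-diagonal by parity with nonzero entries $\|\partial_x\phi_\lambda\|_{L^2}^2$ and $\langle\partial_\lambda\phi_\lambda,\kappa_\lambda\rangle$. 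The paper's equations \eqref{m5.3}, \eqref{sim_parameter1}--\eqref{sim_parameter2} and the matrix $A$ in \eqref{5.8} are exactly this computation, so no gap remains.
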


\begin{proof}
	Recall the definition 
	$\xi(t)=u(t, \cdot +y(t))-\phi_{\lambda(t)} $
	in \eqref{xi}, that is 
	\begin{align}
		\label{5.3}
		u(t, x)=\phi_{\lambda}(x-y(t))+\xi(t,x-y(t)).
	\end{align}
	Taking the derivative of 
	\eqref{5.3} with respect to $t, $ we have
	\begin{align}
		\label{ut1}
		u_t=\dot{\xi}+\dot{\lambda}\partial_{\lambda}\phi_{\lambda}-\dot{y}\partial_{x}(\phi_{\lambda}+\xi).
	\end{align} 
	Inserting \eqref{ut1} into equation \eqref{equa-hami}, we get 
	\begin{align}
		\label{ut2}
		\dot{\xi}+\dot{\lambda}\partial_{\lambda}\phi_{\lambda}-\dot{y}\partial_{x}(\phi_{\lambda}+\xi)=JE'(\phi_{\lambda}+\xi),
	\end{align}
where we note that $J=-(1-\partial_{x}^2)^{-1}\partial_{x}. $ Adding $\lambda\partial_{x}(\phi_{\lambda}+\xi)$ to both sides of \eqref{ut2}, we have
\begin{align*}
	\dot{\xi}+\dot{\lambda}\partial_{\lambda}\phi_{\lambda}-(\dot{y}-\lambda)\partial_{x}(\phi_{\lambda}+\xi)
	&=JE'(\phi_{\lambda}+\xi)+\lambda\partial_{x}(\phi_{\lambda}+\xi)
	\notag\\
	&=J\left[E'(\phi_{\lambda}+\xi)-\lambda(1-\partial_{x}^2)(\phi_{\lambda}+\xi)\right].
\end{align*}
	We note that $(1-\partial_{x}^2)(\phi_{\lambda}+\xi)=Q'(\phi_{\lambda}+\xi), $ then the above equality can be rewritten as follows:
	\begin{align}
		\label{dynamic-1}
		\dot{\xi}+\dot{\lambda}\partial_{\lambda}\phi_{\lambda}-(\dot{y}-\lambda)\partial_{x}(\phi_{\lambda}+\xi)
		&=J\left[E'(\phi_{\lambda}+\xi)-\lambda Q'(\phi_{\lambda}+\xi)\right]
		\notag\\
		&=JS'_{\lambda}(\phi_{\lambda}+\xi).
	\end{align} 
Using Taylor's type expansion, we have
\begin{align}
	\label{taylor-S'(phi-la+xi)}
	S'_{\lambda}(\phi_{\lambda}+\xi)
	&=S'_{\lambda}(\phi_{\lambda})+S''_{\lambda}(\phi_{\lambda})\xi+O\big(\xi^2\big)
	\notag\\
	&=S''_{\lambda}(\phi_{\lambda})\xi+O\big(\xi^2\big),
\end{align}
where we used $S'_{\lambda}(\phi_{\lambda})=0. $ 
Inserting \eqref{taylor-S'(phi-la+xi)} into \eqref{dynamic-1}, we have
\begin{align}
	\label{m5.3}
		\dot{\xi}+\dot{\lambda}\partial_{\lambda}\phi_{\lambda}-(\dot{y}-\lambda)\partial_{x}(\phi_{\lambda}+\xi)=JS''_{\lambda}(\phi_{\lambda})\xi+\mathcal{N}_1(\xi),
\end{align}
	where  $\mathcal{N}_1(\xi)$ verifies 
	\begin{align*}
		\langle \mathcal{N}_1(\xi), f\rangle 
		=O\left(\|\xi\|^{2}_{H^{1}}\|f\|_{H^1}\right) ,\quad \mbox{for any } f\in H^1.
	\end{align*}

	Taking inner product by \eqref{m5.3} and $\partial_{x}\phi_{\lambda}, \kappa_{\lambda}$ respectively, by integration-by-parts, we have
	\begin{align}
		\label{5.5}
		\langle 
		\dot{\xi},\partial_{x}\phi_{\lambda}
		\rangle 
		+\dot{\lambda}
		\langle 
		\partial_{\lambda}\phi_{\lambda},\partial_{x}\phi_{\lambda}
		\rangle 
		-(\dot{y}-\lambda)
		\langle 
		\partial_{x}\left(\phi_{\lambda}+\xi\right),\partial_{x}\phi_{\lambda} 
		\rangle 
		&=\langle 
	JS_{\lambda}''(\phi_{\lambda})\xi,\partial_{x}\phi_{\lambda}
		\rangle 
		+O\left(\|\xi\|^{2}_{H^{1}}\right), 
		\\
		\label{dynamic_phi_lambda}
		\langle 
		\dot{\xi}, \kappa_{\lambda}
		\rangle 
		+\dot{\lambda}
		\langle 
		\partial_{\lambda}\phi_{\lambda},  \kappa_{\lambda}
		\rangle 
		-(\dot{y}-\lambda)
		\langle 
		\partial_{x}\left(\phi_{\lambda}+\xi\right),  \kappa_{\lambda}
		\rangle 
		&=\langle 
		JS_{\lambda}''(\phi_{\lambda})\xi,\kappa_{\lambda}
		\rangle 
		+O\left(\|\xi\|^{2}_{H^{1}}\right).
	\end{align}
	By the even property of $\phi_{\lambda}, $ it is known that $\Gamma_{c}=B(c)\left(c^2\Psi_{c}+\frac{c}{2}x\partial_{x}\phi_{c}+c\phi_{c}\right)+D(c)(3x^2\phi_{c}+x^3\partial_{x}\phi_c)$ is an even function. Moreover, we note that $\kappa_{\lambda}$ is also an even function since $S_{\lambda}''(\phi_{\lambda})\Gamma_{\lambda}$ has the same parity as $\Gamma_{\lambda}$. Using orthogonality conditions in \eqref{orth}, we simplify \eqref{5.5} and \eqref{dynamic_phi_lambda} as
	\begin{align}
		\label{sim_parameter1}
		-\dot{\lambda}\langle 
		\xi, \partial_{\lambda}\partial_{x}\phi_{\lambda}\rangle 
		-(\dot{y}-\lambda)\Big(\big\|\partial_{x}\phi_{\lambda}\big\|^{2}_{L^{2}}-\langle\xi, \partial_{xx}\phi_{\lambda}\rangle\Big)
	&	=-\langle 
		\xi, S_{\lambda}''(\phi_{\lambda})\big(J\partial_{x}\phi_{\lambda}\big)
		\rangle 
		+O\left(\|\xi\|^{2}_{H^{1}}\right),
		\\
		\label{sim_parameter2}
		\dot{\lambda}
		\left[
		-\langle \xi, \partial_{\lambda}\kappa_{\lambda}
		\rangle 
		+\langle\partial_{\lambda}\phi_{\lambda}, \kappa_{\lambda}\rangle
		\right] 
		+(\dot{y}-\lambda)\langle \xi, \partial_{x}\kappa_{\lambda}\rangle
	&	=-\langle 
		\xi,S_{\lambda}''(\phi_{\lambda})\big(J\kappa_{\lambda}\big)
		\rangle 
		+O\left(\|\xi\|^{2}_{H^{1}}\right),
	\end{align}
where $\langle\partial_{\lambda}\phi_{\lambda}, \kappa_{\lambda}\rangle$ is a constant denoted by $C(\lambda)$ which only depends on $\lambda.$  
	We denote 
	\begin{align*}
		A=
		\begin{pmatrix}
			&-\langle 
			\xi, \partial_{\lambda}\partial_{x}\phi_{\lambda}
			\rangle 
			&-\big\|\partial_{x}\phi_{\lambda}\big\|^2_{L^2}
			+\langle \xi,\partial_{xx}\phi_{\lambda}\rangle  
			\\
			&-\langle \xi, \partial_{\lambda}\kappa_{\lambda}
			\rangle 
			+C(\lambda)
			&\langle \xi, \partial_{x}\kappa_{\lambda}\rangle
		\end{pmatrix}.
	\end{align*}
	Combining \eqref{sim_parameter1} and \eqref{sim_parameter2}, by a direct computation, we have
	\begin{align}
		\label{5.8}
		\begin{pmatrix}
			\dot{\lambda} \\
			\dot{y}-\lambda
		\end{pmatrix}
		&=A^{-1}
		\begin{pmatrix}
			&-\langle \xi, S_{\lambda}''(\phi_{\lambda})\big(J\partial_{x}\phi_{\lambda}\big)\rangle 
			\\
			&-\langle \xi, S_{\lambda}''(\phi_{\lambda})\big(J\kappa_{\lambda}\big)\rangle
		\end{pmatrix}
		+\begin{pmatrix}
			O\left(\|\xi\|^2_{H^{1}}\right) \\
			O\left(\|\xi\|^2_{H^{1}}\right)
		\end{pmatrix}
	\notag\\
		&=\begin{pmatrix}
			O\left(\|\xi\|_{H^{1}}\right) \\
			O\left(\|\xi\|_{H^{1}}\right)
		\end{pmatrix}.
	\end{align}
	Thus we obtain the desired results.
\end{proof}

The second is a precise estimate on the spatial transform parameter $y(t)$. 
\begin{cor}
 \label{cor:5.2}
 Under the same assumption as in Corollary \ref{cor:5.1}; let $f_\lambda=x^3\phi_{\lambda}, $ then
\begin{align}
\label{estimate_doty-lambda}
\dot{y}-\lambda
&=\frac{1}{B(\lambda)}\langle \xi, S''_{\lambda}(\phi_{\lambda})\partial_{x}f_\lambda\rangle
-\frac{1}{B(\lambda)}\partial_{t}\langle \xi, (1-\partial_{x}^2)f_\lambda\rangle
+O\left(\|\xi\|^{2}_{H^{1}}\right),
\end{align}
where
\begin{align*}
B(\lambda)
=\frac{3}{2}\big\|x\phi_{\lambda}\big\|^{2}_{L^{2}}+\frac{9}{2}\big\|x\partial_{x}\phi_{\lambda}\big\|^{2}_{L^{2}}-3\big\|\phi_{\lambda}\big\|^{2}_{L^{2}}.
\end{align*}
\end{cor}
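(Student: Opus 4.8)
The plan is to extract $\dot y-\lambda$ from the modulation equation \eqref{m5.3} by testing it against a carefully chosen \emph{odd} function. Concretely, I would set $h_\lambda:=(1-\partial_x^2)f_\lambda=(1-\partial_x^2)(x^3\phi_\lambda)$ and take the $L^2$-pairing of \eqref{m5.3} with $h_\lambda$. Since $\phi_\lambda$ is even, both $f_\lambda$ and $h_\lambda$ are odd while $\partial_\lambda\phi_\lambda$ is even; hence $\langle\partial_\lambda\phi_\lambda,h_\lambda\rangle=0$ and the $\dot\lambda$-term simply drops out of the pairing. This is precisely the role of the first condition in \eqref{condition-f_lambda}.

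The three remaining terms are then treated separately. For the dispersive term I would use $J=-(1-\partial_x^2)^{-1}\partial_x$ together with the self-adjointness of $(1-\partial_x^2)^{-1}$ and of $S_\lambda''(\phi_\lambda)$ to write, after one integration by parts,
\[
\langle JS_\lambda''(\phi_\lambda)\xi,h_\lambda\rangle=\langle -\partial_x S_\lambda''(\phi_\lambda)\xi,f_\lambda\rangle=\langle\xi,S_\lambda''(\phi_\lambda)\partial_x f_\lambda\rangle,
\]
where the factor $(1-\partial_x^2)$ built into $h_\lambda$ is exactly what cancels the $(1-\partial_x^2)^{-1}$ coming from $J$; this is the reason for choosing $h_\lambda$ of this form. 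For the time-derivative term I would write $\langle\dot\xi,h_\lambda\rangle=\partial_t\langle\xi,(1-\partial_x^2)f_\lambda\rangle-\dot\lambda\langle\xi,\partial_\lambda h_\lambda\rangle$ and absorb the last piece into $O(\|\xi\|^2_{H^1})$ using the rough bound $\dot\lambda=O(\|\xi\|_{H^1})$ from Corollary \ref{cor:5.1}. Finally, the translation term splits as $-(\dot y-\lambda)\langle\partial_x(\phi_\lambda+\xi),h_\lambda\rangle$; its $\xi$-contribution $-(\dot y-\lambda)\langle\partial_x\xi,h_\lambda\rangle$ is again $O(\|\xi\|^2_{H^1})$ because $\dot y-\lambda=O(\|\xi\|_{H^1})$ by Corollary \ref{cor:5.1}, leaving only the constant coefficient $\langle\partial_x\phi_\lambda,h_\lambda\rangle$.

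The one genuine computation is to identify this coefficient. Moving $(1-\partial_x^2)$ back onto $\partial_x\phi_\lambda$ and integrating by parts repeatedly (using elementary identities such as $\int_\R x\phi_\lambda\partial_x\phi_\lambda\dx=-\tfrac12\|\phi_\lambda\|_{L^2}^2$), I expect to obtain
\[
\langle\partial_x\phi_\lambda,h_\lambda\rangle=-\tfrac32\|x\phi_\lambda\|_{L^2}^2-\tfrac92\|x\partial_x\phi_\lambda\|_{L^2}^2+3\|\phi_\lambda\|_{L^2}^2=-B(\lambda).
\]
In particular $\langle f_\lambda,\partial_x\phi_\lambda\rangle\neq 0$ for $\lambda$ near $c$, so the second condition in \eqref{condition-f_lambda} holds and division by $B(\lambda)$ is legitimate. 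Collecting the four contributions and solving for $\dot y-\lambda$ then yields \eqref{estimate_doty-lambda}.

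The main obstacle is not any single manipulation but the bookkeeping of the error terms: one must verify that every contribution other than the two advertised ones is genuinely quadratic in $\xi$, which crucially relies on feeding back the rough first-order bounds $\dot\lambda,\ \dot y-\lambda=O(\|\xi\|_{H^1})$ already established in Corollary \ref{cor:5.1}. The design choice that makes the whole scheme succeed is the pairing against $h_\lambda=(1-\partial_x^2)f_\lambda$ with $f_\lambda$ odd: oddness annihilates $\dot\lambda$, the factor $(1-\partial_x^2)$ linearizes the action of $J$, and the resulting constant is exactly the nonvanishing $-B(\lambda)$.
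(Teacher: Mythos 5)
Your proposal is correct and is essentially the paper's own proof: the paper likewise pairs \eqref{m5.3} with $(1-\partial_x^2)(x^3\phi_\lambda)$, kills the $\dot\lambda$-term by parity, uses the rough bounds of Corollary \ref{cor:5.1} to absorb all quadratic contributions, converts the $J$-term into $\langle \xi, S''_{\lambda}(\phi_{\lambda})\partial_x f_\lambda\rangle$ by self-adjointness and one integration by parts, and identifies the coefficient $\langle(1-\partial_x^2)\phi_\lambda,\partial_x(x^3\phi_\lambda)\rangle=B(\lambda)$ exactly as you do. The only cosmetic slip is that legitimacy of dividing requires $B(\lambda)=-\langle(1-\partial_x^2)f_\lambda,\partial_x\phi_\lambda\rangle\neq 0$ rather than $\langle f_\lambda,\partial_x\phi_\lambda\rangle\neq 0$, an assumption the paper also makes implicitly.
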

\begin{proof}

Taking inner product by \eqref{m5.3} and $(1-\partial_{x}^2)f_\lambda, $ by integration-by-parts, we have
\begin{align}
\label{dynamic_doty}
\langle (1-\partial_{x}^{2})\dot{\xi}, f_\lambda\rangle 
+\dot{\lambda}\langle (1-\partial_{x}^{2})\partial_{\lambda}\phi_{\lambda}, f_\lambda\rangle
-(\dot{y}-\lambda)&\langle (1-\partial_{x}^{2})\partial_{x}\phi_{\lambda}, f_\lambda\rangle\notag\\
&=-\langle 
\partial_{x}\left(S_{\lambda}''(\phi_{\lambda})\xi\right), f_\lambda
\rangle 
+O\left(\|\xi\|^{2}_{H^{1}}\right). 
\end{align}
It's worth noting that $f_\lambda=x^3\phi_{\lambda}\in \L2$ since $\phi_{\lambda}$ is exponential decaying.
Now we consider terms in \eqref{dynamic_doty} one by one.
First, from the rough estimate $\dot{\lambda}=O\left(\|\xi\|_{H^{1}}\right)$ in Corollary \ref{cor:5.1}, we have
\begin{align}
\label{first-oder-term}
\langle (1-\partial_{x}^{2})\dot{\xi}, f_\lambda\rangle
&=\partial_{t}\langle \xi, (1-\partial_{x}^{2})(x^3\phi_{\lambda})\rangle
-\dot{\lambda}\langle \xi, (1-\partial_{x}^{2})\left(x^3\partial_{\lambda}\phi_{\lambda}\right)\rangle\notag\\
&=\partial_{t}\langle \xi, (1-\partial_{x}^{2})(x^3\phi_{\lambda})\rangle
+O\left(\|\xi\|^{2}_{H^{1}}\right). 
\end{align} 
The term $\dot{\lambda}\langle (1-\partial_{x}^{2})\partial_{\lambda}\phi_{\lambda}, x^3\phi_{\lambda}\rangle$ vanishes since $\phi_{\lambda}$ is an even function. Then, direct calculation gives that
\begin{align}
\label{cofficient_doty-lam}
-(\dot{y}-\lambda)\langle (1-\partial_{x}^{2})\partial_{x}\phi_{\lambda}, x^3\phi_{\lambda}\rangle
&=(\dot{y}-\lambda)\langle (1-\partial_{x}^{2})\phi_{\lambda}, 3x^2\phi_{\lambda}+x^3\partial_{x}\phi_{\lambda}\rangle\notag\\
&=(\dot{y}-\lambda)\left[\frac{3}{2}\big\|x\phi_{\lambda}\big\|^{2}_{L^{2}}+\frac{9}{2}\big\|x\partial_{x}\phi_{\lambda}\big\|^{2}_{L^{2}}-3\big\|\phi_{\lambda}\big\|^{2}_{L^{2}}\right].
\end{align}
Using the property of $S_\lambda''(\phi_{\lambda})$ in \eqref{2.10}, we have
\begin{align}
		\label{S''partial_xf}
	-\langle \partial_x\left(S''_{\la}(\phi_{\lambda})\xi\right), f_\lambda\rangle
	&=\langle S''_{\la}(\phi_{\lambda})\xi, \partial_xf_\lambda\rangle \notag\\
	&=\langle \xi, S''_{\la}(\phi_{\lambda})\left(3x^2\phi_{\lambda}+x^3\partial_{x}\phi_{\lambda}\right) \rangle.
\end{align}
Combining \eqref{first-oder-term}--\eqref{S''partial_xf}, and thus we complete the proof.
\end{proof}

\section{The localized virial identity}
\label{4}
  The following lemma is the localized virial identity.
 Let $ y, \lambda, \xi$ be the parameters and function obtained in Corollary \ref{cor:5.1}, $f_\lambda, B(\lambda)$ are the same as Corollary \ref{cor:5.2}.
 Denote
 \begin{align}
 	\label{defi-H(u)}
 	H(u)=-(1-\partial_{x}^2)^{-1}(u+|u|^{p}u).
 \end{align}
From the equation \eqref{equa-hami}, we obtain that $\partial_{x}H(u)=u_{t}.$ 
Inserting  the expression of $u$ in \eqref{5.3} into \eqref{defi-H(u)}, we have
\begin{align*}
	H(u)=-(1-\partial_{x}^2)^{-1}\big(\phi_{\lambda}+\xi+|\phi_{\lambda}+\xi|^{p}(\phi_{\lambda}+\xi)\big).
\end{align*}
Noting that $\phi_{\lambda}$ satisfies 
\begin{align}
	\label{equa-lambda}
	-\lambda\partial_{xx}\phi_{\lambda}+(\lambda-1)\phi_{\lambda}-\phi_{\lambda}^{p+1}=0,
\end{align}
and 
\begin{align}
	\label{S''la}
	S''_{\la}(\phi_{\lambda})f=\lambda\partial_{xx}f+(1-\la)f+(p+1)\phi_{\lambda}^{p}f.
\end{align}
From \eqref{equa-lambda} and \eqref{S''la}, we obtain that
\begin{align}
	\label{H(u)}
	H(u)
	=-\lambda(\phi_{\lambda}+\xi)-(1-\partial_{x}^2)^{-1}\big(S''_{\lambda}(\phi_{\lambda})\xi\big)+(1-\partial_{x}^2)^{-1}\mathcal{N}_2(\xi),
\end{align}
where $\mathcal{N}_2(\xi)$ has the same property as  $\mathcal{N}_1(\xi)$ in \eqref{m5.3} which verifies that 
\begin{align*}
	\langle \mathcal{N}_2(\xi), f\rangle 
	=O\left(\|\xi\|^{2}_{H^{1}}\|f\|_{H^1}\right),\quad \mbox{for any } f\in \H1.
\end{align*}
We also denote that
 \begin{align*}
  &I_{1}(t)=\int_{\R} \varphi(x-y(t))(\frac{1}{2}u^{2}+\frac{1}{p+2}|u|^{p+2}) \dx,\\
  &I_{2}(t)=\frac{D(\lambda)}{B(\lambda)}\langle\xi,  (1-\partial_{x}^{2})(x^3\phi_{\lambda})\rangle,
  \end{align*}
where
\begin{align*}
    D(\lambda)&=-\frac{4p\lambda+4\lambda-3p}{2(p+4)}\big\|\phi_{\lambda}\big\|^2_{L^2},
    \\
	B(\lambda)&=\frac{3}{2}\big\|x\phi_{\lambda}\big\|^2_{L^{2}}+\frac{9}{2}\big\|x\partial_x\phi_{\lambda}\big\|^{2}_{L^2}-3\big\|\phi_{\lambda}\big\|^2_{L^{2}}.
\end{align*}
\begin{lem}
	\label{lem6.1}
	Let $\varphi \in C^{3}(\R), u\in \H1$ be the solution of \eqref{1.1}, then 
\begin{align*}
 I_{1}'(t)
 &=       
 -\dot{y}\int_{\R}
 \varphi'(x-y(t))\left(\frac{1}{2}u^{2}+\frac{1}{p+2}|u|^{p+2}\right)\dx
  +\frac{1}{2}\int_{\R}\varphi'(x-y(t))
 \left[\big(H(u)\big)^{2}-u_{t}^{2}\right]
 \dx,
 \\
 	I_{2}'(t)
 &=\frac{D(\lambda)}{B(\lambda)}\partial_{t}\langle\xi,  (1-\partial_{x}^{2})(x^3\phi_{\lambda})\rangle
 +O\big(\|\xi\|_{H^1}^2\big).
\end{align*}
\end{lem}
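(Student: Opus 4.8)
The plan is to prove the two identities in Lemma \ref{lem6.1} separately, since $I_2'(t)$ is essentially immediate while $I_1'(t)$ is the genuine localized virial computation.

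\medskip

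\textbf{The identity for $I_2'(t)$.} First I would observe that $I_2(t)=\frac{D(\lambda)}{B(\lambda)}\langle\xi,(1-\partial_x^2)(x^3\phi_\lambda)\rangle$ depends on $t$ through both $\xi(t)$ and $\lambda(t)$. Differentiating by the product rule, the $\partial_t$ hitting $\xi$ (inside the inner product) produces exactly the stated main term $\frac{D(\lambda)}{B(\lambda)}\partial_t\langle\xi,(1-\partial_x^2)(x^3\phi_\lambda)\rangle$, while every remaining term carries a factor of either $\dot\lambda$ or an extra $\xi$. Each such term is then controlled: the coefficient $\frac{D(\lambda)}{B(\lambda)}$ and its $\lambda$-derivative are bounded (since $\lambda$ stays near $c$), and the $\lambda$-derivatives of $(1-\partial_x^2)(x^3\phi_\lambda)$ are again $L^2$ functions because $\phi_\lambda$ and its $\lambda$-derivatives decay exponentially. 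Using the rough bound $\dot\lambda=O(\|\xi\|_{H^1})$ from Corollary \ref{cor:5.1} together with Cauchy--Schwarz, each of these collects into $O(\|\xi\|_{H^1}^2)$, giving the claimed formula.

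\medskip

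\textbf{The identity for $I_1'(t)$.} This is the main computation. I would differentiate
\[
I_1(t)=\int_\R \varphi(x-y(t))\Big(\tfrac12 u^2+\tfrac{1}{p+2}|u|^{p+2}\Big)\dx
\]
under the integral sign. The time derivative splits into two pieces: one where $\partial_t$ hits the cutoff $\varphi(x-y(t))$, producing $-\dot y\int_\R \varphi'(x-y(t))\big(\tfrac12 u^2+\tfrac1{p+2}|u|^{p+2}\big)\dx$, which is exactly the first stated term; and one where $\partial_t$ hits the integrand $\tfrac12 u^2+\tfrac1{p+2}|u|^{p+2}$, giving $\int_\R \varphi(x-y(t))\,(u+|u|^p u)\,u_t\dx$. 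For the second piece I would use the equation in the form $u_t=\partial_x H(u)$ with $H(u)=-(1-\partial_x^2)^{-1}(u+|u|^p u)$, so that $u+|u|^p u=-(1-\partial_x^2)H(u)=-H(u)+\partial_{xx}H(u)$. Substituting this and writing the second piece as $\int_\R\varphi\,\big(-(1-\partial_x^2)H(u)\big)\partial_x H(u)\dx$, I would integrate by parts (moving the $x$-derivatives off $H(u)$ and onto $\varphi$) to re-express everything in terms of $\big(H(u)\big)^2$, $\big(\partial_x H(u)\big)^2=u_t^2$, and derivatives of $\varphi$. The algebra should collapse the $\big(\partial_x H(u)\big)^2$ and $\big(H(u)\big)^2$ contributions into $\tfrac12\int_\R\varphi'(x-y(t))\big[(H(u))^2-u_t^2\big]\dx$, matching the second stated term.

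\medskip

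The main obstacle is the integration by parts in the $I_1'(t)$ computation: one must carefully track the boundary-free terms generated by $\varphi$, $\varphi'$, and $\varphi''$ when moving two spatial derivatives across the product $H(u)\,\partial_x H(u)$, and verify that all contributions other than the $\varphi'$-weighted difference of squares cancel (using $\partial_x(\partial_x H(u))^2=2\,\partial_x H(u)\,\partial_{xx}H(u)$ and $\partial_x(H(u))^2=2H(u)\partial_x H(u)$, then integrating by parts once more). The exponential decay of $u$ (inherited from $\phi_\lambda$ plus the small $H^1$ perturbation $\xi$) guarantees that no boundary terms survive at $\pm\infty$, so the identity is clean. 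For $I_2'(t)$ the only subtlety is confirming that all error terms genuinely carry a quadratic power of $\xi$, which follows from the structure noted above.
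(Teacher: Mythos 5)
Your proposal is correct and follows essentially the same route as the paper: for $I_1'(t)$ one uses $u_t=\partial_x H(u)$ and $u+|u|^pu=-(1-\partial_x^2)H(u)$ to recognize that $u_t\,(u+|u|^pu)=-\tfrac12\partial_x\big[(H(u))^2-u_t^2\big]$ is a perfect spatial derivative, so a single integration by parts produces only the $\varphi'$-weighted term (the $\varphi''$ bookkeeping you flag as the main obstacle never actually arises), and for $I_2'(t)$ the product rule plus the rough bound $\dot\lambda=O(\|\xi\|_{H^1})$ from Corollary \ref{cor:5.1} absorbs everything but the stated main term into $O(\|\xi\|_{H^1}^2)$. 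The only cosmetic inaccuracy is attributing the vanishing of boundary terms to exponential decay of $u$ (the perturbation $\xi$ is merely $H^1$); what one really uses is that $H(u)$ and $u_t$ lie in $H^1$ and hence vanish at infinity, together with the boundedness of $\varphi$ and $\varphi'$.
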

\begin{proof}
	First, a direct computation gives that
\begin{align*}
	 I_{1}'(t)=       
	-\dot{y}\int_{\R}
	\varphi'(x-y(t))\left(\frac{1}{2}u^{2}+\frac{1}{p+2}|u|^{p+2}\right)\dx
   +\int_{\R}
   \varphi(x-y(t))\partial_{t}\left(\frac{1}{2}u^{2}+\frac{1}{p+2}|u|^{p+2}\right)\dx.
\end{align*} 
Multiplying \eqref{equa-hami} by $u+|u|^{p}u $ gives:
\begin{align*}
	\partial_{t}\big(\frac{1}{2}u^2+\frac{1}{p+2}|u|^{p+2}\big)
=-(1-\partial_{x}^{2})^{-1}\partial_{x}\big(u+|u|^{p}u \big)\cdot\big(u+|u|^{p}u \big).
\end{align*} 
Further, noting that 
$$u+|u|^{p}u=-(1-\partial_x^2)H(u)=-H(u)+\partial_{t}\partial_{x}u, $$
we get that 
\begin{align*}
	\partial_{t}\big(\frac{1}{2}u^2+\frac{1}{p+2}|u|^{p+2}\big)
&=\partial_{x}H(u)\cdot\big(-H(u)+\partial_{t}\partial_{x}u\big)
\notag\\
&=-\frac{1}{2}\partial_{x}\big[\big(H(u)\big)^{2}-u_t^{2}\big],
\end{align*} 
where we used $\partial_{x}H(u)=u_t$ in the last step.
%
Then by integration-by-parts, we obtain that  
\begin{align*}
	I_{1}'(t)=       
	&-\dot{y}\int_{\R}
	\varphi'(x-y(t))\left(\frac{1}{2}u^{2}+\frac{1}{p+2}|u|^{p+2}\right)\dx
	+\frac{1}{2}\int_{\R}\varphi'(x-y(t))
	\left[\big(H(u)\big)^{2}-u_{t}^{2}\right]\dx.
\end{align*}

Second, a direct computation shows that
\begin{align*}
	 I_{2}'(t)=
	&\dot{\lambda}\partial_{\lambda}\Big[\frac{D(\lambda)}{B(\lambda)}\Big] \langle \xi,  (1-\partial_{x}^{2})(x^3\phi_{\lambda})\rangle 
	+\frac{D(\lambda)}{B(\lambda)}\partial_{t}\langle\xi,  (1-\partial_{x}^{2})(x^3\phi_{\lambda})\rangle 
	+O\big(\|\xi\|_{H^1}^2\big) \\
   =&\frac{D(\lambda)}{B(\lambda)}\partial_{t}\langle\xi,  (1-\partial_{x}^{2})(x^3\phi_{\lambda})\rangle 
	+O\big(\|\xi\|_{H^1}^2\big),
\end{align*}
here we used the estimate of $\dot{\lambda}=O\big(\|\xi\|_{H^1}\big) $ in Corollary \ref{cor:5.1} in the last step.
This completes the proof.
\end{proof}

\section{The monotonic functional}
\label{5}
 This section is devoted to prove our main theorem.
\subsection{Virial identities}
 Let $\varphi(x)$ be a smooth cutoff function, where 
 \begin{align}
 	\label{7.1}
 	\varphi(x)=\left\{
 	\begin{aligned}
 		 &x,  \qquad |x|\leqslant R, \\
 		 &0,  \qquad  |x|\geqslant 2R,
 	\end{aligned}
 	 \right.
\end{align}
$0 \leqslant \varphi' \leqslant 1$ for any $x\in\R, $ and $R$ is a large constant decided later.
Moreover, we denote
\begin{align*}
	I(t)=I_{1}(t)+I_{2}(t).
\end{align*}
Then we have the following lemma.

\begin{lem}
	\label{lem7.1}
	Let $R>0, y, \lambda,\xi$ be the parameters and function obtained in Corollary \ref{cor:5.1}. Then
	\begin{subequations}
\begin{align}
		I_1'(t)
=&-\lambda E(u_{0})
+\frac{1}{2}\lambda^{2}
\left(
 \big\|\phi_{\lambda}\big\|^{2}_{L^{2}}-\big\|\partial_{x}\phi_{\lambda}\big\|^{2}_{L^{2}}
\right)
\label{I1-1}
\\
&+\lambda^{2}\langle\xi, \phi_{\lambda}+\partial_{xx}\phi_{\lambda}\rangle 
 +\lambda \langle \xi, S''_{\lambda}(\phi_{\lambda})\phi_{\lambda} \rangle
-(\dot{y}-\lambda)\Big[E(u_0)+2\lambda
\big\|\partial_{x}\phi_{\lambda}\big\|^{2}_{L^{2}}\Big]\\
&+O\left(\frac{1}{R}+\|\xi\|^{2}_{H^{1}}\right).\notag
\end{align}
\end{subequations}
\end{lem}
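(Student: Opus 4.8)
The plan is to start from the identity for $I_1'(t)$ furnished by Lemma \ref{lem6.1},
\[
I_1'(t)=-\dot y\int_\R\varphi'(x-y)\Big(\tfrac12 u^2+\tfrac1{p+2}|u|^{p+2}\Big)\dx+\tfrac12\int_\R\varphi'(x-y)\big[(H(u))^2-u_t^2\big]\dx,
\]
insert the decomposition $u(t,x)=(\phi_\lambda+\xi)(x-y(t))$ together with the expansion \eqref{H(u)} of $H(u)$, and treat the two integrals separately after the change of variables $z=x-y(t)$. Throughout I keep in mind that $\varphi'\equiv1$ on $|z|\le R$, so replacing $\varphi'$ by $1$ only costs errors supported in $|z|>R$; for the pure-$\phi_\lambda$ and the $\phi_\lambda\xi$ cross terms these are $O(1/R)$ and $O(1/R+\|\xi\|_{H^1}^2)$ respectively (the mixed tail being $\lesssim\|\phi_\lambda\|_{L^2(|z|>R)}\|\xi\|_{L^2}$, which by exponential decay and Young's inequality is $\lesssim 1/R+\|\xi\|_{H^1}^2$), while the genuinely quadratic $\xi$-pieces are directly $O(\|\xi\|_{H^1}^2)$.

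For the first integral I would expand $\tfrac12 v^2+\tfrac1{p+2}|v|^{p+2}$ with $v=\phi_\lambda+\xi$; by the Taylor identity $E(\phi_\lambda)+\langle E'(\phi_\lambda),\xi\rangle=E(u)+O(\|\xi\|_{H^1}^2)$ and energy conservation $E(u)=E(u_0)$, the coefficient of $-\dot y$ collapses to $E(u_0)+O(1/R+\|\xi\|_{H^1}^2)$. Writing $-\dot y=-\lambda-(\dot y-\lambda)$ then produces the terms $-\lambda E(u_0)$ and $-(\dot y-\lambda)E(u_0)$. For the second integral I would use $u_t=\partial_x H(u)$, so in the $z$-variable it reads $\tfrac12\int\varphi'[\tilde H^2-(\partial_z\tilde H)^2]$, and insert $\tilde H=H_0+H_1+\mathcal R$ with $H_0=-\lambda\phi_\lambda$, $H_1=-\lambda\xi-(1-\partial_x^2)^{-1}S_\lambda''(\phi_\lambda)\xi$ and $\|\mathcal R\|_{H^1}=O(\|\xi\|_{H^1}^2)$, read off from \eqref{H(u)}. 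The zeroth-order part gives $\tfrac12\lambda^2\int\varphi'[\phi_\lambda^2-(\partial_x\phi_\lambda)^2]=\tfrac12\lambda^2(\|\phi_\lambda\|_{L^2}^2-\|\partial_x\phi_\lambda\|_{L^2}^2)+O(1/R)$. The first-order part $\int\varphi'[H_0H_1-\partial_z H_0\,\partial_z H_1]$, after discarding $\varphi'$ (legitimate since $H_0$ decays) and one integration by parts, becomes $\langle(1+\partial_x^2)H_0,H_1\rangle=-\lambda\langle\phi_\lambda+\partial_{xx}\phi_\lambda,H_1\rangle$; using self-adjointness of $(1-\partial_x^2)^{-1}$ and $S_\lambda''(\phi_\lambda)$ together with $(1-\partial_x^2)^{-1}(\phi_\lambda+\partial_{xx}\phi_\lambda)=2(1-\partial_x^2)^{-1}\phi_\lambda-\phi_\lambda$, this equals
\[
\lambda^2\langle\xi,\phi_\lambda+\partial_{xx}\phi_\lambda\rangle+\lambda\langle\xi,S_\lambda''(\phi_\lambda)\phi_\lambda\rangle+2\lambda\big\langle\xi,S_\lambda''(\phi_\lambda)(1-\partial_x^2)^{-1}\partial_{xx}\phi_\lambda\big\rangle .
\]
Everything quadratic, together with the $\mathcal N_2(\xi)$ and $\mathcal R$ contributions, is $O(\|\xi\|_{H^1}^2)$.

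The remaining task, and the crux of the proof, is to show that the stray term $2\lambda\langle\xi,S_\lambda''(\phi_\lambda)(1-\partial_x^2)^{-1}\partial_{xx}\phi_\lambda\rangle$ is precisely the missing $-(\dot y-\lambda)\,2\lambda\|\partial_x\phi_\lambda\|_{L^2}^2$. The key observation is that, since $J=-(1-\partial_x^2)^{-1}\partial_x$ is skew-adjoint and $J\partial_x\phi_\lambda=-(1-\partial_x^2)^{-1}\partial_{xx}\phi_\lambda$, one has $\langle\xi,S_\lambda''(\phi_\lambda)(1-\partial_x^2)^{-1}\partial_{xx}\phi_\lambda\rangle=\langle JS_\lambda''(\phi_\lambda)\xi,\partial_x\phi_\lambda\rangle$. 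I would then feed this into the modulation identity \eqref{5.5}: the orthogonality $\langle\xi,\partial_x\phi_\lambda\rangle=0$ (whence $\langle\dot\xi,\partial_x\phi_\lambda\rangle=O(\|\xi\|_{H^1}^2)$), the parity relation $\langle\partial_\lambda\phi_\lambda,\partial_x\phi_\lambda\rangle=0$, and the bound $\dot\lambda=O(\|\xi\|_{H^1})$ from Corollary \ref{cor:5.1} reduce \eqref{5.5} to $\langle JS_\lambda''(\phi_\lambda)\xi,\partial_x\phi_\lambda\rangle=-(\dot y-\lambda)\|\partial_x\phi_\lambda\|_{L^2}^2+O(\|\xi\|_{H^1}^2)$. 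Substituting back turns the stray term into $-(\dot y-\lambda)\,2\lambda\|\partial_x\phi_\lambda\|_{L^2}^2+O(\|\xi\|_{H^1}^2)$, which combines with the $-(\dot y-\lambda)E(u_0)$ from the first integral to form $-(\dot y-\lambda)\big[E(u_0)+2\lambda\|\partial_x\phi_\lambda\|_{L^2}^2\big]$. Collecting all contributions yields the asserted formula.

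I expect this last identification to be the main obstacle: one must recognize that the leftover $(1-\partial_x^2)^{-1}\partial_{xx}\phi_\lambda$ factor is not a genuinely new quantity but, through the $J$-structure and the modulation equation \eqref{5.5}, exactly encodes the $(\dot y-\lambda)$ contribution. A secondary subtlety is the cutoff bookkeeping: because $\xi$ carries only $H^1$-smallness and no spatial decay, one cannot replace $\varphi'$ by $1$ in the quadratic-in-$\xi$ terms, and must instead lean on energy conservation and on absorbing the mixed tail terms into $O(1/R+\|\xi\|_{H^1}^2)$.
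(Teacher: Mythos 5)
Your proposal is correct and follows essentially the same route as the paper: the same starting identity from Lemma \ref{lem6.1}, the same expansion \eqref{H(u)} of $H(u)$, the same tail bookkeeping via exponential decay of $\phi_{\lambda}$, and—crucially—the same use of the modulation equation \eqref{5.5}/\eqref{sim_parameter1} to recognize the term $\langle\xi, S''_{\lambda}(\phi_{\lambda})(1-\partial_x^2)^{-1}\partial_{xx}\phi_{\lambda}\rangle$ as $-(\dot y-\lambda)\|\partial_x\phi_{\lambda}\|_{L^2}^2+O(\|\xi\|_{H^1}^2)$. The only difference is cosmetic: you pair $\tfrac12[(H(u))^2-u_t^2]$ at once through $\langle(1+\partial_x^2)H_0,H_1\rangle$, while the paper expands $\int(H(u))^2$ and $\int u_t^2$ separately (in \eqref{-1ut} and \eqref{int_u_t^2}) before subtracting.
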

\begin{proof}
  From \eqref{lem6.1} and the conversation law of energy, we change the form of $I_{1}'(t)$ as
\begin{align*}
 I_{1}'(t)
=
&-\dot{y}\int_{\R}
   \varphi'(x-y(t))\left(\frac{1}{2}u^{2}+\frac{1}{p+2}|u|^{p+2}\right)\dx+\frac{1}{2}\int_{\R}
 \varphi'(x-y(t))
 \left[\big(H(u)\big)^{2}-u_{t}^{2}\right]\dx
 \notag\\
=
&-\dot{y}E(u_{0})
  +\frac{1}{2}\int_{\R}
  \left[\big(H(u)\big)^{2}-u_{t}^{2}\right]\dx+R(u),
\end{align*}
where
\begin{align}
	\label{7.3}
R(u)
&=\int_{\R}\left[1-\varphi'(x-y(t))\right]
	\left[
	\dot{y}\left(\frac{1}{2}u^{2}+\frac{1}{p+2}|u|^{p+2}\right)
	-\frac{1}{2}\left(H(u)\right)^{2} 
	+\frac{1}{2}u_{t}^{2}
	\right]\dx.
\end{align}
Then we need to consider the terms $\frac{1}{2}\int_{\R}
\left[\big(H(u)\big)^{2}-u_{t}^{2}\right]\dx
$ and $R(u). $

$\bullet$ \textit{ Estimate on 
	$\frac{1}{2}\int_{\R}
	\left[(H(u))^{2}-u_{t}^{2}\right]\dx.$}

Now we consider terms
 $\int_{\R}
 (H(u))^{2}\dx$ and $\int_{\R}u_{t}^{2}\dx $
 respectively.
We recall the expression of $H(u)$ in \eqref{H(u)}, we have
	\begin{align*}
		\int_{\R}
		(H(u))^{2}\dx
		&=\int_{\R}\left[-\lambda(\phi_{\lambda}+\xi)-(1-\partial_{x}^2)^{-1}\big(S''_{\lambda}(\phi_{\lambda})\xi\big)+(1-\partial_{x}^2)^{-1}\mathcal{N}_2(\xi)\right]^2\dx
		\notag\\
		&=\lambda^2\big\|\phi_{\lambda}+\xi\big\|^{2}_{L^{2}}
		+2\lambda\left\langle \phi_{\lambda}, (1-\partial_{x}^2)^{-1}\big(S''_{\lambda}(\phi_{\lambda})\xi\big)\right\rangle
		\\
		&\quad
		+2\lambda\left\langle \xi, (1-\partial_{x}^2)^{-1}\big(S''_{\lambda}(\phi_{\lambda})\xi\big)\right\rangle
		+\big\|(1-\partial_{x}^2)^{-1}\big(S''_{\lambda}(\phi_{\lambda})\xi\big)\big\|^{2}_{L^{2}}
		+O\big(\|\xi\|_{H^{1}}^2\big).
\end{align*}
Now we estimate the terms above one by one.

(i) The term $\big\|\phi_{\lambda}+\xi\big\|^{2}_{L^{2}}$. 
\begin{align}
	\label{term1}
	\big\|\phi_{\lambda}+\xi\big\|^{2}_{L^{2}}
	=\big\|\phi_{\lambda}\big\|^{2}_{L^{2}}+2\langle \xi, \phi_{\lambda}\rangle+O\big(\|\xi\|_{H^{1}}^2\big).
\end{align}

(ii) The term $\left\langle \phi_{\lambda}, (1-\partial_{x}^2)^{-1}\big(S''_{\lambda}(\phi_{\lambda})\xi\big)\right\rangle$. 
We recall \eqref{sim_parameter1} and insert the rough estimates of $\dot{\lambda}$ and $\dot{y}-\lambda$ obtained in Corollary \ref{cor:5.1} into \eqref{sim_parameter1}, we have that 
\begin{align*}
	-(\dot{y}-\lambda)\big\|\partial_x\phi_{\lambda}\big\|^{2}_{L^{2}}
=\langle \xi, S''_{\lambda}(\phi_{\lambda})\left[(1-\partial_{x}^2)^{-1}\partial_{xx}\phi_{\lambda}\right] \rangle
 +O\big(\|\xi\|_{H^{1}}^2\big).
\end{align*}
We note that $(1-\partial_{x}^2)^{-1}\partial_{xx}\phi_{\lambda}=-\phi_{\lambda}+(1-\partial_{x}^2)^{-1}\phi_{\lambda}, $ then we have
\begin{align}
	\label{1-partial_xx^{-1}}
	\left\langle \phi_{\lambda}, (1-\partial_{x}^2)^{-1}\big(S''_{\lambda}(\phi_{\lambda})\xi\big)\right\rangle
&=\left\langle \xi, S''_{\lambda}(\phi_{\lambda})\big((1-\partial_{x}^2)^{-1}\phi_{\lambda}\big)\right\rangle
\notag\\
&=\left\langle \xi, S''_{\lambda}(\phi_{\lambda})\big((1-\partial_{x}^2)^{-1}\partial_{xx}\phi_{\lambda}\big)\right\rangle
+\left\langle \xi, S''_{\lambda}(\phi_{\lambda})\phi_{\lambda}\right\rangle
\notag\\
&=
-(\dot{y}-\lambda)\big\|\partial_x\phi_{\lambda}\big\|^{2}_{L^{2}} 
+\langle \xi, S''_{\lambda}(\phi_{\lambda})\phi_{\lambda}\rangle
+O\big(\|\xi\|_{H^{1}}^2\big).
\end{align}

(iii) The term $\big\|(1-\partial_{x}^2)^{-1}\big(S''_{\lambda}(\phi_{\lambda})\xi\big)\big\|^{2}_{L^{2}}$.
Using the expression of $S''_{\lambda}(\phi_{\lambda})$ in \eqref{S''la}, we have
\begin{align*}
	(1-\partial_{x}^2)^{-1}\big(S''_{\lambda}(\phi_{\lambda})\xi\big)
	=-\lambda\xi+(1-\partial_{x}^2)^{-1}\left[\xi+(p+1)\phi_{\lambda}^{p}\xi\right]
\end{align*}
Thus by Young's inequality, we have
\begin{align}
	\label{1-partial_xx-S''xi}
	\big\|(1-\partial_{x}^2)^{-1}\big(S''_{\lambda}(\phi_{\lambda})\xi\big)\big\|^{2}_{L^{2}}
	&\lesssim
	\big\|\xi\big\|^{2}_{L^{2}}
	+\big\|(1-\partial_{x}^2)^{-1}\left[\xi+(p+1)\phi_{\lambda}^{p}\xi\right]\big\|^{2}_{L^{2}}
	\notag\\
	&\lesssim
	\big\|\xi\big\|^{2}_{L^{2}}+\big\|\xi+(p+1)\phi_{\lambda}^{p}\xi\big\|^{2}_{L^{2}}
	\notag\\
	&\lesssim
	\big\|\xi\big\|^{2}_{L^{2}},
\end{align}
where we used $\phi_{\lambda}\in L^{\infty}(\R)$ in the last step.

(iv) The term $\left\langle \xi, (1-\partial_{x}^2)^{-1}\big(S''_{\lambda}(\phi_{\lambda})\xi\big)\right\rangle$. 
By H\"older's inequality and \eqref{1-partial_xx-S''xi}, we obtain that 
\begin{align}
	\label{xi-1-partial_xx-S''xi}
	\left\langle \xi, (1-\partial_{x}^2)^{-1}\big(S''_{\lambda}(\phi_{\lambda})\xi\big)\right\rangle
		&\lesssim
	\big\|\xi\big\|_{L^{2}}
	\cdot
	\big\|(1-\partial_{x}^2)^{-1}\big(S''_{\lambda}(\phi_{\lambda})\xi\big)\big\|_{L^{2}}
	\notag\\
	&\lesssim
	\big\|\xi\big\|^2_{L^{2}}.
\end{align}

	Collecting all the estimates above,  we obtain that 
\begin{align}
	\label{-1ut}
     \int_{\R}(H(u))^{2}\dx
&=\lambda^{2}\big\|\phi_{\lambda}\big\|^{2}_{L^{2}}
   +2\lambda^{2}\langle \xi, \phi_{\lambda}\rangle
   +2\lambda \langle \xi, S''_{\lambda}(\phi_{\lambda})\phi_{\lambda}\rangle
   -2\lambda(\dot{y}-\lambda)\big\|\partial_{x}\phi_{\lambda}\big\|^{2}_{L^{2}}
  +O\left(\|\xi\|^{2}_{H^{1}}\right). 
\end{align}

Arguing similarly, taking the derivative of \eqref{H(u)} with respect to $x$, we have that
\begin{align}
	\label{u_t}
	u_t
	&=-(1-\partial_{x}^2)^{-1}\partial_{x}(u+|u|^{p}u)
	\notag\\
	&=-\lambda\partial_x(\phi_{\lambda}+\xi)-(1-\partial_{x}^2)^{-1}\partial_{x}\left[ S''_{\lambda}(\phi_{\lambda})\xi\right]
	+(1-\partial_{x}^2)^{-1}\partial_{x}\mathcal{N}_2(\xi).
\end{align}
Repeating the process above, we obtain that 
\begin{align}
	\label{int_u_t^2}
	\int_{\R}u_{t}^{2}\dx
	=\lambda^{2}\big\|\partial_{x}\phi_{\lambda}\big\|^{2}_{L^{2}}
	-2\lambda^2\langle
	\xi, \partial_{xx}\phi_{\lambda} \rangle 
	+2\lambda(\dot{y}-\lambda)\big\|\partial_{x}\phi_{\lambda}\big\|^{2}_{L^{2}}
     +O\left(\|\xi\|_{H^{1}}^2\right). 
\end{align} 
From \eqref{-1ut} and \eqref{int_u_t^2}, we have
\begin{align}
\frac{1}{2}\int_{\R}
\left[(H(u))^{2}-u_{t}^{2}\right]\dx
	&=\frac{1}{2}\lambda^{2}
	\left(
	\big\|\phi_{\lambda}\big\|^{2}_{L^{2}}
	-\big\|\partial_{x}\phi_{\lambda}\big\|^{2}_{L^{2}}
	\right)
	+\lambda^{2}\langle\xi, \phi_{\lambda}+\partial_{xx}\phi_{\lambda}\rangle \notag\\
   &\quad
    +\lambda \langle \xi, S''_{\lambda}(\phi_{\lambda})\phi_{\lambda} \rangle
    -2\lambda(\dot{y}-\lambda)\big\|\partial_{x}\phi_{\lambda}\big\|^{2}_{L^{2}}
    +O\left(\|\xi\|^{2}_{H^{1}}\right).
     \label{est:main-part1}
\end{align}

$\bullet$\textit{Estimate on $R(u)$. } \\

Using the definition of the cutoff function $\varphi$ in \eqref{7.1}, we have
\begin{align*}
   \big|R(u)\big|
&\leqslant
 \frac{1}{2}\Bigg|\int_{|x-y(t)|>R}
   \left[1-\varphi'(x-y(t))\right]
   \left[\big(H(u)\big)^{2}-u_{t}^{2}\right]
   \dx\Bigg|\\
&\quad
   +|\dot{y}|
   \Bigg|\int_{|x-y(t)|>R}
   \left[1-\varphi'(x-y(t))\right]
  \left[\frac{1}{2}u^{2}+\frac{1}{p+2}|u|^{p+2}\right]\dx
  \\
&\lesssim 
  \int_{|x|>R}
  \left[u^{2}+|u|^{p+2}+\big|H(u)\big|^{2}+u_{t}^{2}
  \right]\dx.
\end{align*}
By H\"older's inequality, Corollary \ref{cor:5.1}, \eqref{1.3}, \eqref{S''la}, \eqref{-1ut} and \eqref{int_u_t^2}, we have
\begin{align*}
	\big|R(u)\big|
&\lesssim 
 \int_{|x|>R}
 \left[
 (\phi_{\lambda}+\xi)^{2}+|\phi_{\lambda}+\xi|^{p+2}
 +|\phi_{\lambda}|^{2}+|\partial_{x}\phi_{\lambda}|^{2}]
 \right]\dx\\
&\qquad  +\Big|\int_{|x|>R}
   \xi \cdot (\phi_{\lambda}+\partial_{xx}\phi_{\lambda}) \dx\Big|
  +O(\|\xi\|^{2}_{H^{1}}).
\end{align*}
 Further, using the property of exponential decay of $ \phi_{\lambda}, \partial_{xx}\phi_{\lambda} $ we have
\begin{align*}
    \int_{|x|>R} |\phi_{\lambda}|^{2}+|\partial_{x}\phi_{\lambda}|^{2} \dx
\leqslant
    C\int_{|x|>R}e^{-C|x|}\dx
\leqslant
    \frac{C}{R}. 
\end{align*} 
Then Young's inequality gives that
\begin{align}
\label{7.8}
   \int_{|x|>R}\xi \cdot (\phi_{\lambda}+\partial_{xx}\phi_{\lambda})\dx
\lesssim
\frac{1}{R}+\|\xi\|^{2}_{L^{2}},\\
	\label{7.9}
	\int_{|x|>R} (\phi_{\lambda}+\xi)^{2} \dx
\lesssim
    \frac{1}{R}+\|\xi\|^{2}_{L^{2}},\\
    \label{7.10}
    \int_{|x|>R} (\phi_{\lambda}+\xi)^{p+2} \dx
\lesssim
    \frac{1}{R}+\|\xi\|^{2}_{H^{1}}.
\end{align}
Therefore, we combine \eqref{7.8}--\eqref{7.10} to obtain
\begin{align} \label{est:remainder}
	\big|R(u)\big|
\leqslant
  C\left(\frac{1}{R}+\|\xi\|^{2}_{H^{1}}\right).
\end{align}

Now inserting the estimates in \eqref{est:main-part1} and \eqref{est:remainder} into \eqref{7.3}, we give the desired estimate  and 
thus  complete the proof of the lemma.
\end{proof}

\subsection{Structure of $I'(t)$}
Denote
\begin{align}
	\label{7.11}
   \beta(u_{0})
&=-\lambda\Big[E(u_{0})-E(\phi_{c})\Big], 
    \\
    \label{7.12}
	\gamma(\lambda)
&=-\lambda E(\phi_{c})
  +\frac{1}{2}\lambda^{2}
  \left(\big\|\phi_{\lambda}\big\|^{2}_{L^{2}}-\big\|\partial_{x}\phi_{\lambda}\big\|^{2}_{L^{2}}\right),
  \\
 \label{7.13}
  \tilde{R}(u)
&=R(u)+\lambda^{2}\langle\xi, \phi_{\lambda}+\partial_{xx}\phi_{\lambda}\rangle
+\lambda \langle \xi, S''_{\lambda}(\phi_{\lambda})\phi_{\lambda}\rangle
-(\dot{y}-\lambda)\left[E(u_0)+2\lambda\big\|\partial_{x}\phi_{\lambda}\big\|^{2}_{L^{2}}\right]
\notag\\
&\quad
+\frac{D(\lambda)}{B(\lambda)}\partial_{t}\langle \xi, (1-\partial_{x}^2)(x^3\phi_{\lambda})\rangle 
+O\left(\|\xi\|^{2}_{H^{1}}\right),
\end{align}
where 
\begin{align*}
	D(\lambda)=-\frac{4p\lambda+4\lambda-3p}{2(p+4)}\big\|\phi_{\lambda}\big\|^{2}_{L^{2}},
	\quad
	B(\lambda)=\frac{3}{2}\big\|x\phi_{\lambda}\big\|^2_{L^{2}}+\frac{9}{2}\big\|x\partial_x\phi_{\lambda}\big\|^{2}_{L^2}-3\big\|\phi_{\lambda}\big\|^2_{L^{2}}.
\end{align*}

\begin{lem}
	\label{lem7.3}
	It holds that
\begin{align*}
	I'(t)=\beta(u_{0})+\gamma(\lambda)+\tilde{R}(u).
\end{align*}	
\begin{proof}
It follows from Lemma \ref{lem6.1} and Lemma \ref{lem7.1} directly.
\end{proof}

\begin{lem}
	\label{lem7.4}
	We estimate $\tilde{R}(u)$ as follows:
	\begin{align}
		\label{7.14}
		\tilde{R}(u)
		=O(\frac{1}{R}+\|\xi\|^{2}_{H^{1}}).
	\end{align}
\end{lem}
\begin{proof}
Recall the definition of $\tilde{R}(u)$ in \eqref{7.13}:
\begin{align*}
	\tilde{R}(u)
	&=
	R(u)
	+\lambda^{2}\langle\xi, \phi_{\lambda}+\partial_{xx}\phi_{\lambda}\rangle
	+\lambda \langle \xi, S''_{\lambda}(\phi_{\lambda})\phi_{\lambda}\rangle
	-(\dot{y}-\lambda)\left[
	E(u_{0})
	+2\lambda\big\|\partial_{x}\phi_{\lambda}\big\|^{2}_{L^{2}}
	\right]
	\\
	&\quad
	+\frac{D(\lambda)}{B(\lambda)}\partial_{t}\langle \xi, (1-\partial_{x}^2)(x^3\phi_{\lambda})\rangle 
	+O\left(\|\xi\|^{2}_{H^{1}}\right).
\end{align*}
First, from Lemma \ref{lem3.3}, we have already known that $\phi_{\lambda}$ and $\partial_{xx}\phi_{\lambda}$ both have pre-image with respect to $S''_{\lambda}(\phi_{\lambda}), $ then we have
\begin{align}
	\label{tidel-R-part1}
	\langle\xi, \phi_{\lambda}+\partial_{xx}\phi_{\lambda}\rangle=\langle\xi, S''_{\lambda}(\phi_{\lambda})\big(\Psi_{\lambda}+\frac{1}{2\lambda}x\partial_{x}\phi_{\lambda}\big)\rangle.
\end{align}
By \eqref{5.3} and Taylor's type expansion, we have
\begin{align*}
	E(u_{0})=E(u)
	&=E(\phi_{\lambda})+\langle E'(\phi_{\lambda}),\xi\rangle 
	+O(\|\xi\|^{2}_{H^{1}})\\
	&=E(\phi_{\lambda})+\langle \xi, \phi_{\lambda}+\phi_{\lambda}^{p+1}
	\rangle 
	+O\left(\|\xi\|^{2}_{H^{1}}\right).
\end{align*} 
From Lemma \ref{lem2.1} and \eqref{2.1}, we have
\begin{align*}
	E(\phi_{\lambda})&=\frac{4\lambda+p}{2(p+4)}\big\|\phi_{\lambda}\big\|^{2}_{L^{2}};
	\\
	E(\phi_{\lambda})+2\lambda\big\|\partial_{x}\phi_{\lambda}\big\|^{2}_{L^{2}}
	&=\frac{4p\lambda+4\lambda-3p}{2(p+4)}\big\|\phi_{\lambda}\big\|^{2}_{L^{2}}.
\end{align*}
Combining the rough estimate of $\dot{y}-\lambda=O\left(\|\xi\|_{H^1}\right)$ in Corollary \ref{cor:5.1} and the precise estimate of $\dot{y}-\lambda$ in \eqref{estimate_doty-lambda}, we have 
\begin{align}
	\label{7.16}
	&-(\dot{y}-\lambda)\left[
	E(u_{0})
	+2\lambda\big\|\partial_{x}\phi_{\lambda}\big\|^{2}_{L^{2}}
	\right]
	\notag\\
	=&
	-(\dot{y}-\lambda)\left[
	E(\phi_{\lambda})
	+2\lambda\big\|\partial_{x}\phi_{\lambda}\big\|^{2}_{L^{2}}
	+\langle \xi, \phi_{\lambda}+\phi_{\lambda}^{p+1}
	\rangle+O\left(\|\xi\|^{2}_{H^{1}}\right)
	\right]\notag\\
	=&-\frac{4p\lambda+4\lambda-3p}{2(p+4)B(\lambda)}\big\|\phi_{\lambda}\big\|^{2}_{L^{2}}
	\cdot
	\langle\xi, S''_{\lambda}(\phi_{\lambda})(3x^2\phi_{\lambda}+x^3\partial_{x}\phi_{\lambda})\rangle
	\notag\\
	&\quad
	+\frac{4p\lambda+4\lambda-3p}{2(p+4)B(\lambda)}\big\|\phi_{\lambda}\big\|^{2}_{L^{2}}
	\cdot
	\partial_{t}\langle \xi, (1-\partial_{x}^{2})\big(x^3\phi_{\lambda}\big)\rangle
	+O\left(\|\xi\|^{2}_{H^{1}}\right).
\end{align}
Inserting \eqref{tidel-R-part1} and \eqref{7.16} into the expression of $\tilde{R}(u)$, we have
\begin{align*}
	\tilde{R}(u)
	=R(u)+
	\langle\xi, S''_{\lambda}(\phi_{\lambda})
	\big[\lambda^2\Psi_{\lambda}+\frac{\lambda}{2}x\partial_{x}\phi_{\lambda}+\lambda\phi_{\lambda}+\frac{D(\lambda)}{B(\lambda)}(3x^2\phi_{\lambda}+x^3\partial_{x}\phi_{\lambda})\big]
	\rangle.
\end{align*}
We note that $S''_{\lambda}(\phi_{\lambda})
\big[B(\lambda)\big(\lambda^2\Psi_{\lambda}+\frac{\lambda}{2}x\partial_{x}\phi_{\lambda}+\lambda\phi_{\lambda}\big)+D(\lambda)(3x^2\phi_{\lambda}+x^3\partial_{x}\phi_{\lambda})\big]=\kappa_{\lambda}$. 
By the second orthogonality condition \eqref{orth} in Proposition \ref{prop3.1}, we complete the proof.
\end{proof}
\end{lem}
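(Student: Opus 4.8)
The plan is to show that, once $R(u)$ is peeled off (it is already of the claimed size by \eqref{est:remainder}), every remaining term in the definition \eqref{7.13} of $\tilde R(u)$ is linear in $\xi$ and that these linear contributions reassemble exactly into $\langle\xi,\kappa_\lambda\rangle$, which vanishes by the second orthogonality condition in \eqref{orth}. Thus the entire argument is a bookkeeping exercise engineered so that the genuinely first-order (in $\xi$) pieces cancel, leaving only $R(u)$ and quadratic remainders of size $O(\|\xi\|_{H^1}^2)$.

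First I would convert the two terms $\lambda^2\langle\xi,\phi_\lambda+\partial_{xx}\phi_\lambda\rangle$ and $\lambda\langle\xi,S''_\lambda(\phi_\lambda)\phi_\lambda\rangle$ into inner products against images of $S''_\lambda(\phi_\lambda)$. Lemma \ref{lem3.3}, read with $c$ replaced by $\lambda$, supplies the pre-images: $\phi_\lambda=S''_\lambda(\phi_\lambda)\Psi_\lambda$ from \eqref{3.6} and $\partial_{xx}\phi_\lambda=S''_\lambda(\phi_\lambda)\big(\tfrac{1}{2\lambda}x\partial_x\phi_\lambda\big)$ from \eqref{3.5}. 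Combined with the self-adjointness \eqref{2.10}, this rewrites those two terms as $\langle\xi,S''_\lambda(\phi_\lambda)\big(\lambda^2\Psi_\lambda+\tfrac{\lambda}{2}x\partial_x\phi_\lambda+\lambda\phi_\lambda\big)\rangle$.

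The heart of the matter is the term $-(\dot y-\lambda)\big[E(u_0)+2\lambda\|\partial_x\phi_\lambda\|_{L^2}^2\big]$. I would expand $E(u_0)=E(u)$ by Taylor around $\phi_\lambda$ and compute $E(\phi_\lambda)+2\lambda\|\partial_x\phi_\lambda\|_{L^2}^2=\tfrac{4p\lambda+4\lambda-3p}{2(p+4)}\|\phi_\lambda\|_{L^2}^2$ using the Pohozaev identities \eqref{2.1} together with Lemma \ref{lem2.1}. The cross term $(\dot y-\lambda)\langle\xi,\phi_\lambda+\phi_\lambda^{p+1}\rangle$ is quadratic in $\xi$ by the rough bound $\dot y-\lambda=O(\|\xi\|_{H^1})$ of Corollary \ref{cor:5.1}, hence absorbable. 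For the main factor I substitute the precise formula \eqref{estimate_doty-lambda} of Corollary \ref{cor:5.2}, taking $f_\lambda=x^3\phi_\lambda$ so that $\partial_x f_\lambda=3x^2\phi_\lambda+x^3\partial_x\phi_\lambda$; recalling $D(\lambda)=-\tfrac{4p\lambda+4\lambda-3p}{2(p+4)}\|\phi_\lambda\|_{L^2}^2$, this produces exactly \eqref{7.16}, namely a static piece $\tfrac{D(\lambda)}{B(\lambda)}\langle\xi,S''_\lambda(\phi_\lambda)(3x^2\phi_\lambda+x^3\partial_x\phi_\lambda)\rangle$ plus a time-derivative piece $-\tfrac{D(\lambda)}{B(\lambda)}\partial_t\langle\xi,(1-\partial_x^2)(x^3\phi_\lambda)\rangle$.

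To finish, I would observe that this last $\partial_t$ piece cancels identically the term $\tfrac{D(\lambda)}{B(\lambda)}\partial_t\langle\xi,(1-\partial_x^2)(x^3\phi_\lambda)\rangle$ already sitting in \eqref{7.13}. Collecting every surviving first-order contribution and factoring out $1/B(\lambda)$, the resulting bracket is precisely $\Gamma_\lambda$ of \eqref{Gamma_c}, so by \eqref{kappa_c} the total equals $\tfrac{1}{B(\lambda)}\langle\xi,S''_\lambda(\phi_\lambda)\Gamma_\lambda\rangle=\tfrac{1}{B(\lambda)}\langle\xi,\kappa_\lambda\rangle$, which vanishes by \eqref{orth}; hence $\tilde R(u)=R(u)=O\big(\tfrac1R+\|\xi\|_{H^1}^2\big)$. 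The step I expect to be most delicate is this final cancellation: the quantity $\partial_t\langle\xi,(1-\partial_x^2)(x^3\phi_\lambda)\rangle$ is genuinely first order in $\xi$ and cannot be bounded by $\|\xi\|_{H^1}^2$, so the argument closes only because the coefficient $D(\lambda)/B(\lambda)$ built into $I_2(t)$ was chosen to match exactly the one generated by \eqref{7.16}; recognizing that the leftover linear combination reproduces $\Gamma_\lambda$ verbatim, so that the orthogonality applies, is the other point demanding care.
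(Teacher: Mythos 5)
Your proposal is correct and follows essentially the same route as the paper's own proof: converting $\lambda^{2}\langle\xi,\phi_{\lambda}+\partial_{xx}\phi_{\lambda}\rangle$ via the pre-images of Lemma \ref{lem3.3}, Taylor-expanding $E(u_0)$ around $\phi_{\lambda}$, substituting the precise estimate \eqref{estimate_doty-lambda} with $f_{\lambda}=x^{3}\phi_{\lambda}$ so that the $\partial_{t}\langle \xi,(1-\partial_{x}^{2})(x^{3}\phi_{\lambda})\rangle$ contributions cancel, and recognizing that the surviving first-order terms assemble into $\tfrac{1}{B(\lambda)}\langle\xi,S''_{\lambda}(\phi_{\lambda})\Gamma_{\lambda}\rangle=\tfrac{1}{B(\lambda)}\langle\xi,\kappa_{\lambda}\rangle=0$ by \eqref{orth}. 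You also correctly identify the two genuinely delicate points (the exact cancellation of the unboundable $\partial_t$ term and the verbatim reconstruction of $\Gamma_{\lambda}$), which is precisely where the paper's construction of $I_2(t)$ and of $D(\lambda),B(\lambda)$ does its work.
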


\subsubsection{Lower bound of $\beta(u_0)$}
\begin{lem}
	\label{lem7.5}
	Let $u_{0}=(1-a)\phi_{c}$ for some small positive constant $a$.
	Then there exist a constant $C_{1}>0$, such that
	$$
	\beta(u_{0})\geq C_{1}a.
	$$
\end{lem}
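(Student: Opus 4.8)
The plan is to substitute $u_{0}=(1-a)\phi_{c}$ directly into the definition of the energy and Taylor expand in the small parameter $a$. Recalling \eqref{energy E} and the fact that $E$ is a sum of pure powers of $u$, one computes exactly
\begin{align*}
E\big((1-a)\phi_{c}\big)
=\frac{(1-a)^{2}}{2}\big\|\phi_{c}\big\|_{L^{2}}^{2}
+\frac{(1-a)^{p+2}}{p+2}\big\|\phi_{c}\big\|_{L^{p+2}}^{p+2}.
\end{align*}
Subtracting $E(\phi_{c})$ and using $(1-a)^{2}-1=-2a+O(a^{2})$ together with $(1-a)^{p+2}-1=-(p+2)a+O(a^{2})$ yields
\begin{align*}
E\big((1-a)\phi_{c}\big)-E(\phi_{c})
=-a\Big(\big\|\phi_{c}\big\|_{L^{2}}^{2}+\big\|\phi_{c}\big\|_{L^{p+2}}^{p+2}\Big)+O(a^{2}).
\end{align*}
The leading coefficient is precisely $\langle E'(\phi_{c}),\phi_{c}\rangle=\|\phi_{c}\|_{L^{2}}^{2}+\|\phi_{c}\|_{L^{p+2}}^{p+2}$, which is a strictly positive quantity since $\phi_{c}$ is a nontrivial positive function.

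Next I would control the modulation parameter $\lambda=\lambda(u_{0})$ appearing in the definition \eqref{7.11} of $\beta(u_{0})$. Since $u_{0}=(1-a)\phi_{c}$ obeys $\|u_{0}-\phi_{c}\|_{\H1}=a\|\phi_{c}\|_{\H1}\lesssim a$, the estimate \eqref{vare} of Proposition \ref{prop3.1} gives $|\lambda-c|\lesssim a$. In particular, for $a$ sufficiently small $\lambda$ stays in a fixed neighborhood of $c$, so that $\lambda\geq\frac{c+1}{2}>1>0$; thus $\lambda$ is bounded away from zero uniformly in $a$, and the factor $\lambda$ multiplying the $O(a^{2})$ remainder keeps it of size $O(a^{2})$.

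Combining the two steps produces
\begin{align*}
\beta(u_{0})=-\lambda\big[E(u_{0})-E(\phi_{c})\big]
=\lambda a\Big(\big\|\phi_{c}\big\|_{L^{2}}^{2}+\big\|\phi_{c}\big\|_{L^{p+2}}^{p+2}\Big)+O(a^{2}).
\end{align*}
Because the coefficient $\lambda\big(\|\phi_{c}\|_{L^{2}}^{2}+\|\phi_{c}\|_{L^{p+2}}^{p+2}\big)$ is bounded below by a strictly positive constant for all small $a$, the linear-in-$a$ term dominates the quadratic remainder once $a$ is taken small enough, whence $\beta(u_{0})\geq C_{1}a$ with, for instance, $C_{1}=\frac{1}{2}\cdot\frac{c+1}{2}\big(\|\phi_{c}\|_{L^{2}}^{2}+\|\phi_{c}\|_{L^{p+2}}^{p+2}\big)$. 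The computation is elementary throughout; the only point needing a little care is confirming that the leading linear term genuinely dominates the $O(a^{2})$ remainder, which is exactly where the uniform lower bound on $\lambda$ and the strict positivity of $\langle E'(\phi_{c}),\phi_{c}\rangle$ enter.
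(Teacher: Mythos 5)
Your computation of the energy difference is correct and is essentially the paper's argument in a slightly more elementary form: where the paper invokes the functional Taylor expansion $E(u_{0})-E(\phi_{c})=\langle E'(\phi_{c}),u_{0}-\phi_{c}\rangle+O(a^{2})$ and then rewrites the coefficient via \eqref{2.1}, you exploit the homogeneity of the two terms of $E$ to compute $E((1-a)\phi_{c})$ exactly and expand the scalar factors $(1-a)^{2}$, $(1-a)^{p+2}$. The leading coefficients agree, $\|\phi_{c}\|_{L^{2}}^{2}+\|\phi_{c}\|_{L^{p+2}}^{p+2}=\big[\tfrac{2(p+2)c}{p+4}-\tfrac{p}{p+4}\big]\|\phi_{c}\|_{L^{2}}^{2}$, and your route makes the strict positivity of this coefficient immediate without needing the inequality $2(p+2)c-p>0$.

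There is, however, one incorrect step: your claim that \eqref{vare} gives $|\lambda-c|\lesssim a$. The parameter $\lambda$ in \eqref{7.11} is not a function of $u_{0}$; it is the time-dependent modulation parameter $\lambda(t)$ of Proposition \ref{prop3.1}, determined by $u(t)$, so $\beta(u_{0})$ in fact varies in $t$ through $\lambda$ and the bound must hold for all $t$. The estimate \eqref{vare} yields $|\lambda(t)-c|\lesssim\varepsilon$ uniformly in $t$, where $\varepsilon$ is the stability parameter from the contradiction hypothesis; the stronger bound $|\lambda-c|\lesssim a$ is justified only at $t=0$, and nothing prevents $\lambda(t)$ from wandering at distance $\sim\varepsilon$ from $c$ later. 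Fortunately this does not break your proof, because all you actually use is that $\lambda$ stays in a fixed neighborhood of $c$ (upper bound to keep $\lambda\cdot O(a^{2})=O(a^{2})$, lower bound $\lambda\geq\tfrac{c+1}{2}$ to keep the linear coefficient uniformly positive), and that follows from $|\lambda(t)-c|\lesssim\varepsilon$ after taking $\varepsilon_{0}$ small. This is exactly the paper's maneuver: it writes $\lambda=c+(\lambda-c)$, producing an error $O(a|\lambda-c|)=O(a\varepsilon)$, and absorbs it by choosing both $a$ and $\varepsilon_{0}$ small. With that one-line correction your argument is complete.
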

\begin{proof}
	Recall the definition of $\beta(u_{0})$ in \eqref{7.11}:
	\begin{align*}
		\beta(u_{0})
		=-\lambda\Big[E(u_{0})-E(\phi_{c})\Big].
	\end{align*}
	Using the expression in \eqref{2.6} and Taylor's type expansion, we have
	\begin{align}
		\label{7.17}
		E(u_{0})-E(\phi_{c})
		&=\langle E'(\phi_{c}), u_{0}-\phi_{c}\rangle
		+O(\big\|u_{0}-\phi_{c}\big\|^{2}_{H^{1}}) \notag\\
		&=-a\int_{\R}
		\left(\phi_{c}+\phi_{c}^{p+1}\right) \cdot \phi_{c}\dx
		+O(a^{2})\notag\\
		&=-a\left[
		\frac{2(p+2)c}{p+4}-\frac{p}{p+4}\right]\big\|\phi_{c}\big\|^{2}_{L^{2}} +O(a^{2}).
	\end{align}
	Then we put \eqref{7.17} into the expression of $\beta(u_{0}), $
	\begin{align*}
		\beta(u_{0})
		&=-\lambda\Big[E(u_{0})-E(\phi_{c})\Big] \notag\\
		&=a\lambda\left[
		\frac{2(p+2)c}{p+4}-\frac{p}{p+4}\right]
		\big\|\phi_{c}\big\|^{2}_{L^{2}}+O(a^{2})\notag\\
		&=ac\left[
		\frac{2(p+2)c}{p+4}-\frac{p}{p+4}\right]
		\big\|\phi_{c}\big\|^{2}_{L^{2}}+O(a|\lambda-c|)+O(a^{2}).
	\end{align*}
	Note that $c>1,$
	and choosing $a$ and $\varepsilon_{0}$ small enough, where $\varepsilon_{0}$ is the constant in Proposition \ref{prop3.1}, and by \eqref{vare}, we obtain the conclusion of this lemma.
\end{proof}

\subsubsection{Lower bound of $\gamma(\lambda)$}
\begin{lem}
	\label{gamma}
	There exists a positive constant $C_2$ such that 
$$\gamma(\lambda)\geq C_{2}(\lambda-c)^{2}+o\left((\lambda-c)^{2}\right).
$$
\end{lem}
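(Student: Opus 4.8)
The plan is to reduce the whole statement to a second‑order Taylor expansion of $\gamma$ at $\lambda=c$, after first rewriting $\gamma$ in a form that exposes its degeneracy. The first step is to use the virial (Pohozaev) identities \eqref{2.1}, which give $\|\partial_x\phi_\lambda\|_{L^2}^2=\frac{p(\lambda-1)}{(p+4)\lambda}\|\phi_\lambda\|_{L^2}^2$ and hence $\|\phi_\lambda\|_{L^2}^2-\|\partial_x\phi_\lambda\|_{L^2}^2=\frac{4\lambda+p}{(p+4)\lambda}\|\phi_\lambda\|_{L^2}^2$. Substituting this into the definition \eqref{7.12} of $\gamma$ and recalling that \eqref{2.1} also yields $E(\phi_\lambda)=\frac{4\lambda+p}{2(p+4)}\|\phi_\lambda\|_{L^2}^2$, the expression collapses to the single clean identity $\gamma(\lambda)=\lambda\big[E(\phi_\lambda)-E(\phi_c)\big]$. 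In particular $\gamma(c)=0$.

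Next I would compute the first two derivatives at $\lambda=c$. Differentiating $E(\phi_\lambda)$ and invoking the Euler--Lagrange relation $E'(\phi_\lambda)=\lambda Q'(\phi_\lambda)$ (which is merely $S_\lambda'(\phi_\lambda)=0$) gives $\frac{d}{d\lambda}E(\phi_\lambda)=\lambda\,\partial_\lambda Q(\phi_\lambda)$, so that $\gamma'(\lambda)=\big[E(\phi_\lambda)-E(\phi_c)\big]+\lambda^2\,\partial_\lambda Q(\phi_\lambda)$ and $\gamma''(\lambda)=3\lambda\,\partial_\lambda Q(\phi_\lambda)+\lambda^2\,\partial_\lambda^2 Q(\phi_\lambda)$. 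At $\lambda=c=c_0(p)$ the critical condition $\partial_c Q(\phi_c)|_{c=c_0}=0$ of Lemma \ref{lem2.1} kills both the first‑order term and the first summand of $\gamma''$, leaving $\gamma'(c)=0$ together with $\gamma''(c)=c^2\,\partial_c^2 Q(\phi_c)|_{c=c_0}$.

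It then remains to prove $\partial_c^2 Q(\phi_c)|_{c=c_0}>0$, and here the explicit formula \eqref{partial_c_Q(phi_c)} does the work. Writing $\partial_c Q(\phi_c)=N(c)\,g(c)$ with $N(c)=8(p+2)c^2-8pc-p^2$ and $g(c)=\|\phi_c\|_{L^2}^2\big/\big[4p(p+4)c^2(c-1)\big]$ smooth and positive near $c_0$, and noting that $N(c_0)=0$ is exactly the equation defining $c_0(p)$, differentiation leaves only $\partial_c^2 Q(\phi_c)|_{c_0}=N'(c_0)\,g(c_0)$. Since $N'(c)=16(p+2)c-8p$ and $16(p+2)c_0=8p\big(1+\sqrt{2+p/2}\big)$, one computes $N'(c_0)=8p\sqrt{2+p/2}>0$, while $g(c_0)>0$ because $c_0>1$; hence $\gamma''(c)>0$. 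Taking $C_2=\tfrac12\gamma''(c)>0$, Taylor's theorem (using the standard smooth dependence of $\phi_\lambda$ on $\lambda$) yields $\gamma(\lambda)=C_2(\lambda-c)^2+o\big((\lambda-c)^2\big)$, which gives the claimed lower bound.

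The only genuine subtlety is the sign of $\gamma''(c)$: because $\gamma$ is degenerate to second order, with $\gamma(c)=0$ and $\gamma'(c)=0$ (the latter being exactly the defining relation $d''(c_0)=0$), the inequality rests entirely on the third‑order information carried by $\partial_c^2 Q(\phi_c)$. The decisive simplification is that $N(c_0)=0$, so all the quotient‑rule terms involving $g'$ drop out and the sign is controlled by the manifestly positive $N'(c_0)$; this is where I expect the main care is needed, along with verifying $c_0>1$ so that $g(c_0)$ retains a definite sign.
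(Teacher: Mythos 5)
Your proof is correct, and it establishes the paper's key claim \eqref{7.18} (namely $\gamma(c)=\gamma'(c)=0$ and $\gamma''(c)>0$) by a genuinely different computational route. The paper works directly with the explicit representation \eqref{7.19} of $\gamma(\lambda)$ in terms of $\big\|\phi_{\lambda}\big\|_{L^2}^2$, differentiates twice using the scaling law \eqref{dla}, computes $\partial_{\lambda\lambda}\big\|\phi_{\lambda}\big\|_{L^2}^2$ explicitly, and then simplifies at $c=c_0(p)$ (implicitly invoking the defining relation $8(p+2)c^2-8pc-p^2=0$) to reach $\gamma''(c)=\frac{p-4c}{2p(c-1)^2}\big\|\phi_{c}\big\|_{L^2}^2$, whose positivity rests on $p-4c_0>0$ for $p>4$. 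You instead exploit the variational structure: the identity $\gamma(\lambda)=\lambda\big[E(\phi_{\lambda})-E(\phi_{c})\big]$ (which indeed follows from \eqref{2.1}), together with $\frac{d}{d\lambda}E(\phi_{\lambda})=\lambda\,\frac{d}{d\lambda}Q(\phi_{\lambda})$ coming from $S_{\lambda}'(\phi_{\lambda})=0$, reduces everything to $\lambda$-derivatives of $Q(\phi_{\lambda})$; Lemma \ref{lem2.1} then kills the first-order terms, giving $\gamma'(c)=0$ and $\gamma''(c)=c^2\,\partial_c^2 Q(\phi_{c})\big|_{c=c_0}$, and the factorization of \eqref{partial_c_Q(phi_c)} as $N(c)g(c)$ with $N(c_0)=0$ collapses the second derivative to the single product $N'(c_0)g(c_0)=8p\sqrt{2+p/2}\;g(c_0)>0$. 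This buys two things: it avoids the messy computation of $\partial_{\lambda\lambda}\big\|\phi_{\lambda}\big\|_{L^2}^2$ altogether, and it makes transparent that the sign of $\gamma''(c)$ is controlled by the derivative of the quadratic that defines $c_0(p)$. As a consistency check, your value $\gamma''(c)=\frac{2\sqrt{2+p/2}}{(p+4)(c-1)}\big\|\phi_{c}\big\|_{L^2}^2$ agrees with the paper's expression once $c=c_0(p)$ is substituted. The two points you cite without proof are the same ones the paper also takes for granted and are harmless: smoothness of $\lambda\mapsto\big\|\phi_{\lambda}\big\|_{L^2}^2$ (immediate from \eqref{2.3}), which legitimizes the second-order Taylor expansion, and $c_0(p)>1$ for $p>4$ (equivalent to $(p+2)(p-4)(p+4)>0$), which guarantees $g(c_0)>0$.
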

\begin{proof}
Recall the definition of $\gamma(\lambda) $ from  \eqref{7.12}:
\begin{align*}
 	\gamma(\lambda)
&=-\lambda E(\phi_{c})
 +\frac{1}{2}\lambda^{2}
 \left(\big\|\phi_{\lambda}\big\|^{2}_{L^{2}}-\big\|\partial_{x}\phi_{\lambda}\big\|^{2}_{L^{2}}\right).
\end{align*} 
	We claim that 
\begin{align}
	\label{7.18}
   \gamma(c)=0, \quad \gamma'(c)=0, \quad \gamma''(c)>0.
\end{align}
We prove the claim by the following three steps.

\textit{Step 1. $ \gamma(c)=0. $}

From \eqref{2.1}, we have
\begin{align*}
	E(\phi_{c})
=\left(\frac{1}{2}+\frac{2(c-1)}{p+4}\right)\big\|\phi_{c}\big\|^{2}_{L^{2}};
  \\
  \big\|\phi_{\lambda}\big\|^{2}_{L^{2}}-\big\|\partial_{x}\phi_{\lambda}\big\|^{2}_{L^{2}}
=\frac{4\lambda+p}{(p+4)\lambda}\big\|\phi_{\lambda}\big\|^{2}_{L^{2}}.
\end{align*}
So, we have
\begin{align}
	\label{7.19}
	\gamma(\lambda)
=-\lambda\left(
  \frac{1}{2}+\frac{2(c-1)}{p+4}\right)
  \big\|\phi_{c}\big\|^{2}_{L^{2}}
  +\frac{4\lambda^{2}+p\lambda}{2(p+4)}
  \big\|\phi_{\lambda}\big\|^{2}_{L^{2}}.
\end{align}
A direct computation gives
\begin{align*}
\gamma(c)
&=-c\left(
   \frac{1}{2}+\frac{2(c-1)}{p+4}\right)
   \big\|\phi_{c}\big\|^{2}_{L^{2}}
  +\frac{4c^{2}+pc}{2(p+4)}
  \big\|\phi_{c}\big\|^{2}_{L^{2}}
=0.
\end{align*}

\textit{Step  2. $ \gamma'(c)=0. $}

Using the expression of $\gamma(\lambda)$ in \eqref{7.19}, we have
\begin{align}
	\label{7.20}
	\gamma'(\lambda)
&=-\left(
\frac{1}{2}+\frac{2(c-1)}{p+4}\right)
\big\|\phi_{c}\big\|^{2}_{L^{2}}
  +\frac{8\lambda+p}{2(p+4)}\big\|\phi_{\lambda}\big\|^{2}_{L^{2}}\
+\frac{4\lambda^{2}+p\lambda}{2(p+4)}\partial_{\lambda}\big\|\phi_{\lambda}\big\|^{2}_{L^{2}}.
\end{align}
By \eqref{2.4}, we have 
\begin{align}
	\label{dla}
	\partial_{\lambda}\big(\big\|\phi_{\lambda}\big\|^{2}_{L^{2}}\big)
=\frac{4\lambda-p}{2p\lambda(\lambda-1)}\big\|\phi_{\lambda}\big\|^{2}_{L^{2}}, 
\end{align}
so we have
\begin{align*}
\gamma'(c)
&=\left[
 -\left(\frac{1}{2}+\frac{2(c-1)}{p+4}\right)
 +\frac{8c+p}{2(p+4)}
 +\frac{(4c+p)(4c-p)}{4p(p+4)(c-1)}
 \right]
 \big\|\phi_{c}\big\|^{2}_{L^{2}}
=0.
\end{align*}

\textit{Step 3. $ \gamma''(c)>0. $}

From the expression of $\gamma(\lambda)$ in \eqref{7.19}, we have
\begin{align*}
	\gamma''(\lambda)
	=\frac{8\lambda+p}{p+4}\partial_{\lambda}\big\|\phi_{\lambda}\big\|^{2}_{L^{2}}
	+\frac{4}{p+4}\big\|\phi_{\lambda}\big\|^{2}_{L^{2}}
	+\frac{4\lambda^{2}+p\lambda}{2(p+4)}\partial_{\lambda\lambda}\big\|\phi_{\lambda}\big\|^{2}_{L^{2}}.
\end{align*}
From \eqref{dla}, we have
\begin{align*}
	\partial_{\lambda\lambda}\big\|\phi_{\lambda}\big\|^{2}_{L^{2}}
	&=\frac{-4\lambda^2+2p\lambda-p}{2p\lambda^2(\lambda-1)^2}\big\|\phi_{\lambda}\big\|^{2}_{L^{2}}+\left[\frac{4\lambda-p}{2p\lambda(\lambda-1)}\right]^2\big\|\phi_{\lambda}\big\|^{2}_{L^{2}}\\
	&=\frac{8(2-p)\lambda^{2}+4p(p-2)\lambda-p^{2}}{4p^{2}\lambda^{2}(\lambda-1)^{2}}\big\|\phi_{\lambda}\big\|^{2}_{L^{2}}.
\end{align*}
So we get
\begin{align*}
	\gamma''(c)
	=\frac{p-4c}{2p(c-1)^{2}}\big\|\phi_{c}\big\|^{2}_{L^{2}},
\end{align*}
noting that $p-4c=\frac{p}{p+2}\left[p-2\sqrt{2+\frac{1}{2}p}\right]>0$ when $p>4,$
then $	\gamma''(c)>0. $ This proves the claim \eqref{7.18}.

Using \eqref{7.18} and Taylor's type expansion, we get
\begin{align*}
	\gamma(\lambda)
	&=\gamma(c)+\gamma'(c)(\lambda-c)+\frac{1}{2}\gamma''(c)(\lambda-c)^{2}+o\left((\lambda-c)^{2}\right)\\
	&\geq
	C_{2}(\lambda-c)^{2}+o\left((\lambda-c)^{2}\right),
\end{align*}
where
$C_{2}=\frac{1}{2}\gamma''(c)>0.$
Thus we obtain the conclusion of this lemma.
\end{proof}

Hence, combining Lemmas \ref{lem7.3}--\ref{lem7.5}, and \eqref{7.14}, we have
\begin{align}
	\label{7.21}
	I'(t)
	\geq
	C_{1}a+C_{2}(\lambda-c)^{2}
	+O\left(\frac{1}{R}+\|\xi\|^{2}_{H^{1}}\right)
	+o\left((\lambda-c)^{2}\right).
\end{align}

\subsubsection{Upper bound of $\|\xi\|_{H^{1}} $}
\begin{lem}
	\label{lem7.6}
	Let $\xi$ be defined in \eqref{xi}, then for any $t \in \R, $
\begin{align*}
	\|\xi\|^{2}_{H^{1}}
\lesssim
 O\left(a|\lambda-c|+a^{2}\right)
 +o\left((\lambda-c)^{2}\right).
\end{align*}
\end{lem}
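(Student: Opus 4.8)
The plan is to combine the coercivity of $S_\lambda''(\phi_\lambda)$ from Proposition \ref{prop3.1} with the conservation of $E$ and $Q$. Since $E$ and $Q$ are translation invariant and conserved along the flow, at each time $t$ (writing $\lambda=\lambda(t)$) one has $E(\phi_\lambda+\xi)=E(u_0)$ and $Q(\phi_\lambda+\xi)=Q(u_0)$, hence
\begin{align*}
S_\lambda(\phi_\lambda+\xi)=E(u_0)-\lambda Q(u_0)=S_\lambda(u_0).
\end{align*}
Expanding the left side around $\phi_\lambda$ by Taylor's formula and using $S_\lambda'(\phi_\lambda)=0$ gives
\begin{align*}
S_\lambda(u_0)=S_\lambda(\phi_\lambda)+\tfrac12\langle S_\lambda''(\phi_\lambda)\xi,\xi\rangle+o(\|\xi\|_{H^1}^2).
\end{align*}
Feeding this into the coercivity estimate \eqref{posi} and absorbing the $o(\|\xi\|_{H^1}^2)$ term on the left for $\varepsilon$ small, I obtain the clean bound $\|\xi\|_{H^1}^2\lesssim S_\lambda(u_0)-S_\lambda(\phi_\lambda)$.

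It then remains to estimate $S_\lambda(u_0)-S_\lambda(\phi_\lambda)$, which I would split as $\bigl[S_\lambda(u_0)-S_\lambda(\phi_c)\bigr]+\bigl[S_\lambda(\phi_c)-S_\lambda(\phi_\lambda)\bigr]$. The second bracket is $o((\lambda-c)^2)$ directly by Corollary \ref{cor2.2}. For the first bracket I would Taylor expand around $\phi_c$, using $u_0-\phi_c=-a\phi_c$ from the choice in Lemma \ref{lem7.5}. Since $S_c'(\phi_c)=0$, a one-line computation gives $S_\lambda'(\phi_c)=(c-\lambda)Q'(\phi_c)$, so the linear term equals $a(\lambda-c)\langle Q'(\phi_c),\phi_c\rangle=O(a|\lambda-c|)$, while the quadratic term is $\tfrac12 a^2\langle S_\lambda''(\phi_c)\phi_c,\phi_c\rangle=O(a^2)$. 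Collecting these yields $S_\lambda(u_0)-S_\lambda(\phi_c)=O(a|\lambda-c|+a^2)$, and adding the $o((\lambda-c)^2)$ contribution produces exactly the stated bound.

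This is essentially a standard modulated-coercivity estimate, so the only delicate point is the bookkeeping of error terms. One must check that the Taylor remainders are genuinely of the claimed order: the $o(\|\xi\|_{H^1}^2)$ from the expansion at $\phi_\lambda$ is absorbed using smallness of $\|\xi\|_{H^1}$, and the $o(a^2)$ remainder from the expansion at $\phi_c$ is dominated by the $O(a^2)$ term already present. The one structural input to invoke with care is that conservation holds for $E$ and $Q$ separately even though $\lambda(t)$ varies in time; this is legitimate because the identity $S_{\lambda(t)}(u(t))=E(u_0)-\lambda(t)Q(u_0)$ uses only the conserved values of $E$ and $Q$, evaluated against the single number $\lambda(t)$ at each fixed $t$.
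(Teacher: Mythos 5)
Your proposal is correct and follows essentially the same route as the paper: conservation plus translation invariance to reduce to $S_\lambda(u_0)-S_\lambda(\phi_\lambda)$, coercivity \eqref{posi} to bound $\|\xi\|_{H^1}^2$ by that difference, the same splitting through $S_\lambda(\phi_c)$ with Corollary \ref{cor2.2} handling the second bracket, and a Taylor expansion at $\phi_c$ for the first. The only (immaterial) difference is bookkeeping: the paper writes $S_\lambda(u_0)-S_\lambda(\phi_c)=S_c(u_0)-S_c(\phi_c)-(\lambda-c)\bigl[Q(u_0)-Q(\phi_c)\bigr]$ and expands $S_c$ and $Q$ separately, whereas you expand $S_\lambda$ directly via $S_\lambda'(\phi_c)=(c-\lambda)Q'(\phi_c)$; these yield the identical linear term $a(\lambda-c)\langle Q'(\phi_c),\phi_c\rangle$.
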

\begin{proof}
	First, since $ u=\left(\phi_{\lambda}+\xi\right)(x-y)$ in \eqref{5.3}, by Taylor's type extension and $S'_{c}(\phi_{c})=0$, we have
\begin{align*}
	S_{\lambda}(u)-S_{\lambda}(\phi_{\lambda})
&=\langle
S'_{\lambda}(\phi_{\lambda}), \xi\rangle 
+\frac{1}{2}\langle
S''_{\lambda}(\phi_{\lambda})\xi, \xi\rangle 
+o\left(\|\xi\|^{2}_{H^{1}}\right)
\\
&=\frac{1}{2}\langle
S''_{\lambda}(\phi_{\lambda})\xi, \xi\rangle 
+o\left(\|\xi\|^{2}_{H^{1}}\right).
\end{align*} 
Using $S'_{c}(\phi_{c})=0$ and Taylor's type extension, we have
\begin{align*}
  S_{\lambda}(u)-S_{\lambda}(\phi_{\lambda})
=\frac{1}{2}\langle
  S''_{\lambda}(\phi_{\lambda})\xi, \xi\rangle 
  +o\left(\|\xi\|^{2}_{H^{1}}\right).
\end{align*}
Then by Proposition \ref{prop3.1}, we get
\begin{align*}
	S_{\lambda}(u)-S_{\lambda}(\phi_{\lambda})
\gtrsim
 \|\xi\|^{2}_{H^{1}}.
\end{align*}
Second, note that
\begin{align*}
	S_{\lambda}(u)-S_{\lambda}(\phi_{\lambda})
=S_{\lambda}(u_{0})-S_{\lambda}(\phi_{c})+S_{\lambda}(\phi_{c})-S_{\lambda}(\phi_{\lambda}),
\end{align*}
and the expression of $S_c$ in \eqref{2.5} gives that 
\begin{align*}
	S_{\lambda}(u_{0})-S_{\lambda}(\phi_{c})
=S_{c}(u_{0})-S_{c}(\phi_{c})
 -(\lambda-c)\big[Q(u_{0})-Q(\phi_{c})].
\end{align*}
Using the Taylor's type expansion, by $S'_{c}(\phi_{c})=0, $ \eqref{2.7} and \eqref{2.1}, we have
\begin{align*}
	S_{c}(u_{0})-S_{c}(\phi_{c})
&=\langle S'_{c}(\phi_{c}),u_{0}-\phi_{c}\rangle 
 +O\left(\big\|u_{0}-\phi_{c}\big\|^{2}_{H^{1}}\right)\\
&=O(a^{2});
 \\
 Q(u_{0})-Q(\phi_{c})
&=\langle Q'(\phi_{c}),u_{0}-\phi_{c}\rangle 
  +O\left(\big\|u_{0}-\phi_{c}\big\|^{2}_{H^{1}}\right)\\
&=-a\left[1+\frac{p(c-1)}{c(p+4)}\right]\big\|\phi_{c}\big\|^{2}_{L^{2}}+O(a^{2}).
\end{align*}
So we obtain
\begin{align*}
	S_{\lambda}(u_{0})-S_{\lambda}(\phi_{c})
&=(\lambda-c)a\left[1+\frac{p(c-1)}{c(p+4)}\right]\big\|\phi_{c}\big\|^{2}_{L^{2}}+O(a^{2})\\
&=O\left(a^{2}+a|\lambda-c|\right).
\end{align*}
Moreover, by Corollary \ref{cor2.2}, we have
\begin{align*}
	S_{\lambda}(\phi_{c})-S_{\lambda}(\phi_{\lambda})
=o\left((\lambda-c)^{2}\right).
\end{align*}
Finally, we get the desired result
\begin{align*}
	\|\xi\|^{2}_{H^{1}}
&\lesssim
  S_{\lambda}(u)-S_{\lambda}(\phi_{\lambda})
=S_{\lambda}(u_{0})-S_{\lambda}(\phi_{c})+S_{\lambda}(\phi_{c})-S_{\lambda}(\phi_{\lambda})\\
&=O\left(a^{2}+a|\lambda-c|\right)
 +o\left((\lambda-c)^{2}\right).
\end{align*}
This completes the proof.
\end{proof}

\subsection{Proof of Theorem 1.2}
  As in the discussion above, we assume that 
  $u\in U_{\varepsilon}(\phi_{c}), $ and thus $|\lambda-c|\lesssim \varepsilon. $ 
  We note that from the definition of $I(t) $ and \eqref{2.1} we have the uniform boundedness of $I(t):$
 \begin{align}
 	\label{7.22}
 	\sup_{t \in \R} I(t)
 \lesssim
  R(\big\|\phi_{c}\big\|^{2}_{L^{2}}+1).
 \end{align}
Now we estimate $I'(t). $
From \eqref{7.21} and Lemma \ref{lem7.6}, we have
\begin{align*}
	I'(t)
	&\geq
	C_{1}a+C_{2}(\lambda-c)^{2}
	+O\left(\frac{1}{R}+\|\xi\|^{2}_{H^{1}}\right)
	+o\left((\lambda-c)^{2}\right)\\
	&\geq
	C_{1}a+C_{2}(\lambda-c)^{2}
	+O\left(a^{2}+a|\lambda-c|\right)
	+o\left((\lambda-c)^{2}\right)
	+O\left(\frac{1}{R}\right).
\end{align*}
By \eqref{vare}, choosing $R$ satisfying
$\frac{1}{R}\leqslant a^{2}$ and $ \varepsilon, a_0 $ small enough, we obtain that for any $a\in (0, a_{0}), $
\begin{align*}
	I'(t)
	&\geq
	C_{1}a+C_{2}(\lambda-c)^{2}
	+O\left(a^{2}+a|\lambda-c|\right)
	+o\left((\lambda-c)^{2}\right)\\
	&\geq
	\frac{1}{2}C_{1}a+\frac{1}{2}C_{2}(\lambda-c)^{2}.
\end{align*}
This implies $I(t)\rightarrow +\infty $ when $t\rightarrow +\infty, $ which is contradicted with \eqref{7.22}. Hence we prove the instability of solitary wave solution $\phi_{c}(x-ct)$ and thus give the proof of Theorem \ref{main:thm}.

\appendix
\section{}
\label{appendix}

\subsection{Spectrum  of $S_{c}''(\phi_{c}) $}

First, we study the kernel of $S_{c}''(\phi_{c}) $ in the following lemma. The proof is standard, and it is a consequence of the result from \cite{weinstein-1985-modulational}. 
\begin{lem}
	\label{lem3.1}
	The kernel of $S_{c}''(\phi_{c}) $ satisfies that
\begin{align*}
	 \ker S_{c}''(\phi_{c})
	=\{\alpha \partial_x \phi_{c}:\alpha \in \R \}. 
\end{align*}
\end{lem}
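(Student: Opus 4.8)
The plan is to reduce the computation of $\ker S_c''(\phi_c)$ to a spectral problem for a one-dimensional Schr\"odinger operator and then invoke Sturm--Liouville oscillation theory. By \eqref{2.13} we have $S_c''(\phi_c)f = -cL_\omega f$ with $c>0$, so the two operators share the same kernel, and it suffices to show $\ker L_\omega = \{\alpha\,\partial_x\psi_\omega : \alpha\in\R\}$, where $L_\omega f = -\partial_{xx}f + (1-\omega^2)f - (p+1)\psi_\omega^p f$ as in \eqref{2.12}. Since $\psi_\omega$ decays exponentially, the potential $-(p+1)\psi_\omega^p$ tends to $0$ at infinity, so $L_\omega$ is a self-adjoint Schr\"odinger operator whose essential spectrum is $[1-\omega^2,\infty)$ with $1-\omega^2 = 1 - c^{-1} > 0$. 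Consequently any element of $\ker L_\omega$ is an $L^2$-eigenfunction for the eigenvalue $0$, which lies strictly below the essential spectrum, and in that region the spectrum is discrete and consists of simple eigenvalues.

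Next I would exhibit an explicit kernel element and count its zeros. Differentiating \eqref{1.5} in $x$ gives $L_\omega(\partial_x\psi_\omega)=0$, so $\partial_x\psi_\omega\in\ker L_\omega$; equivalently $\partial_x\phi_c\in\ker S_c''(\phi_c)$, consistent with \eqref{1.4} and \eqref{2.9}. Because $\psi_\omega$ is even, positive, and strictly decreasing for $x>0$ with its unique maximum at $x=0$, the function $\partial_x\psi_\omega$ is odd and vanishes exactly once, at $x=0$. The Sturm oscillation theorem for one-dimensional Schr\"odinger operators asserts that the eigenfunction associated with the $n$-th eigenvalue (in increasing order) has precisely $n-1$ zeros; an eigenfunction with a single sign change therefore belongs to the second eigenvalue. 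Hence $0$ is the second eigenvalue of $L_\omega$, it is simple, and its eigenspace is spanned by $\partial_x\psi_\omega$.

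Combining these facts, $\ker L_\omega$ is one-dimensional and spanned by $\partial_x\psi_\omega$, so after undoing the scaling \eqref{1.4} we conclude $\ker S_c''(\phi_c) = \{\alpha\,\partial_x\phi_c : \alpha\in\R\}$. The main technical point is the application of oscillation theory: one must first use the positivity of the essential spectrum to guarantee that every kernel element is a genuine decaying $L^2$-eigenfunction, and then invoke simplicity of the eigenvalues of a regular one-dimensional Schr\"odinger operator, so that counting the single zero of $\partial_x\psi_\omega$ already pins down the entire kernel. These are precisely the standard ingredients behind Weinstein's analysis \cite{weinstein-1985-modulational}, which I would cite for the oscillation and simplicity statements.
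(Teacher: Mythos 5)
Your proposal is correct, and its skeleton matches the paper's: both proofs pass from $S_c''(\phi_c)$ to the Schr\"odinger operator $L_\omega$ via $S_c''(\phi_c)f=-cL_\omega f$ (so the two kernels coincide), exhibit $\partial_x\psi_\omega\in\ker L_\omega$ by differentiating \eqref{1.5}, and then classify all kernel elements. The difference lies in how the hard inclusion is handled: the paper simply quotes Weinstein \cite{weinstein-1985-modulational} as a black box for the statement that the only $H^1$ solutions of $L_\omega f=0$ are multiples of $\partial_x\psi_\omega$, whereas you reconstruct the standard argument behind that citation --- positivity of the essential spectrum $[1-\omega^2,\infty)$ (using $1-\omega^2=1-c^{-1}>0$), so that $0$ is an isolated eigenvalue with exponentially decaying eigenfunctions; the zero count of the odd function $\partial_x\psi_\omega$; Sturm oscillation theory placing $0$ as the second eigenvalue; and simplicity of eigenvalues of a one-dimensional Schr\"odinger operator below the essential spectrum (a Wronskian/limit-point argument). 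What your route buys is self-containedness and transparency about exactly which structural facts are used (evenness and monotonicity of the ground state, decay of the potential); what the paper's route buys is brevity. One small point to make airtight in your write-up: kernel elements of $S_c''(\phi_c)$ are a priori only in the operator domain, so you should note (via elliptic regularity and the fact that $0$ lies below the essential spectrum) that they are automatically smooth and exponentially decaying before applying the ODE uniqueness/oscillation machinery; you gesture at this, and it is indeed routine.
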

\begin{proof}
First, we need to show the relationship $``\supset". $ For any 
$ f \in \{{\alpha \partial_x \phi_{c}:\alpha \in \R}\}, $
using \eqref{1.3}, we have
\begin{align}
	\label{3.1}
	S_{c}''(\phi_{c})f
	&=S_{c}''(\phi_{c})(\alpha \partial_x \phi_{c})\notag\\
	&=\alpha \partial_x (c\partial_{xx} \phi_{c}+(1-c)\partial_{c} \phi_{c}+\phi_{c}^{p+1})\notag \\
	&=0
\end{align}
Then \eqref{3.1} implies that $f$ is in the kernel of 
$S_{c}''(\phi_{c}), $ and we have the conclusion
\begin{align*}
	\ker S_{c}''(\phi_{c})
	\supset \{ \alpha \partial_x \phi_{c}:\alpha \in \R \}.
\end{align*}
Second, we prove the reverse relationship $``\subset". $
By the expression of  $S_{c}''(\phi_{c}) $ in \eqref{2.13}, we have
\begin{align*}
	S_{c}''(\phi_{c})f=0 \Leftrightarrow L_{\omega}f =0, 
\end{align*}
for any $f \in \ker (S_{c}''(\phi_{c})), $
that is 
\begin{align}
	\label{3.2}
-\partial_{xx}f+(1-\omega^{2})f+(p+1)\psi_{\omega}^{p}f=0. 
\end{align}
By the work of Weinstein \cite{weinstein-1985-modulational}, 
the only solution to \eqref{3.2} are
\begin{align*}
	f=\alpha \partial_x \psi_{\omega}, \quad \alpha \in \R. 
\end{align*}
Note that
\begin{align*}
	\partial_x \psi_{\omega}
	=c^{-\frac{1}{p}} \partial_x \phi_{c}. 
\end{align*}
This implies that
$ f \in \{ \alpha \partial_x \phi_{c}:\alpha \in \R  \}, $
and we have
\begin{align*}
	\ker (S_{c}''(\phi_{c})) \subset 
	\{ \alpha \partial_x \phi_{c}:\alpha \in \R \}.
\end{align*}  
Finally, combining the two relationship gives us
\begin{align*}
	\ker (S_{c}''(\phi_{c}))
	= \{ \alpha \partial_x \phi_{c}:\alpha \in \R \}.
\end{align*}
This gives the proof of the lemma.
\end{proof}

The second lemma is the uniqueness of the negative eigenvalue of $S_{c}''(\phi_{c}).$
\begin{lem}
	\label{lem3.2}
	$S_{c}''(\phi_{c})$
exists only one negative eigenvalue.
\end{lem}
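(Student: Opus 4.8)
The plan is to transfer the eigenvalue problem for $S_c''(\phi_{c})$ to the rescaled Schr\"odinger operator $L_{\omega}$ of \eqref{2.12} and then to count the bound states of $L_{\omega}$ by Sturm--Liouville oscillation theory. By \eqref{2.13} one has $S_c''(\phi_{c})=-cL_{\omega}$ with $c>0$, so the two operators share the same eigenfunctions and their eigenvalues differ only by the nonzero scalar $-c$; in particular the single isolated eigenvalue of $S_c''(\phi_{c})$ sitting apart from its (negative) essential spectrum is carried by exactly one bound state of $L_{\omega}$. It therefore suffices to analyze the self-adjoint operator $L_{\omega}=-\partial_{x}^2+(1-\omega^2)-(p+1)\psi_{\omega}^{p}$ and to show that among its discrete modes there is precisely one of the relevant sign; this single bound state is the eigenvalue asserted in the lemma.

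First I would locate the essential spectrum. Since $\psi_{\omega}$ decays exponentially, the potential $-(p+1)\psi_{\omega}^{p}$ is a relatively compact perturbation of $-\partial_{x}^2+(1-\omega^2)$, so by Weyl's theorem $\sigma_{\mathrm{ess}}(L_{\omega})=[\,1-\omega^2,\infty)$ with $1-\omega^2=(c-1)/c>0$. Hence everything strictly below $1-\omega^2$ consists of isolated eigenvalues of finite multiplicity, the corresponding eigenfunctions are genuine $L^2$ bound states, and Sturm node-counting applies to them. I would then pin down the two lowest modes. Because $\psi_{\omega}>0$, a Perron--Frobenius (ground-state) argument shows that the bottom of the spectrum of $L_{\omega}$ is a simple eigenvalue $\mu_0$ with a strictly positive eigenfunction; testing on $\psi_{\omega}$ itself and using \eqref{1.5} gives $L_{\omega}\psi_{\omega}=-p\,\psi_{\omega}^{p+1}$, so $\langle L_{\omega}\psi_{\omega},\psi_{\omega}\rangle=-p\,\|\psi_{\omega}\|_{L^{p+2}}^{p+2}<0$ and therefore $\mu_0<0$.

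Next I would use the translation mode to exclude any further negative eigenvalue. By Lemma \ref{lem3.1} (equivalently, differentiating \eqref{1.5} in $x$) the function $\partial_x\psi_{\omega}$ lies in $\ker L_{\omega}$, so $0$ is an eigenvalue with eigenfunction $\partial_x\psi_{\omega}$. Since $\psi_{\omega}$ is even, positive, and has a single maximum, $\partial_x\psi_{\omega}$ vanishes exactly once; by the Sturm oscillation theorem the $n$-th eigenfunction ordered from the bottom (starting at $n=0$) has exactly $n$ interior zeros, so $\partial_x\psi_{\omega}$ is the first excited state and $0$ is the second eigenvalue of $L_{\omega}$. Consequently $L_{\omega}$ has exactly one eigenvalue strictly below $0$, namely the simple $\mu_0$, and none in $(\mu_0,0)$; passing back through \eqref{2.13} this accounts for precisely one isolated negative direction of the linearization, which is the content of the lemma, while $\ker S_c''(\phi_{c})=\mathrm{span}\{\partial_x\phi_{c}\}$ is already recorded in Lemma \ref{lem3.1}.

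The main obstacle is the oscillation-counting step combined with the separation of discrete from essential spectrum: one must use that $\inf\sigma_{\mathrm{ess}}(L_{\omega})=1-\omega^2$ is strictly positive so that the negative eigenvalue is isolated and of finite multiplicity, and one must know a priori that $\partial_x\psi_{\omega}$ has exactly one node (clear from the shape of $\psi_{\omega}$) in order to invoke the node-counting theorem and thereby rule out a hidden eigenvalue in $(0,1-\omega^2)$. A secondary but genuine subtlety, to be flagged rather than glossed over, is the sign bookkeeping in \eqref{2.13}: the essential spectrum of $S_c''(\phi_{c})$ itself is the negative half-line $(-\infty,1-c]$, so the ``one negative eigenvalue'' of the statement must be read as the unique isolated bound state produced by the ground-state mode $\mu_0$ of $L_{\omega}$, tracked carefully through the rescaling rather than through the raw $L^2$ spectrum of $S_c''(\phi_{c})$. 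These two points are exactly where the argument must be made rigorous; everything else is the standard ground-state spectral picture of \cite{weinstein-1985-modulational}.
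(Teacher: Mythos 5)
Your proposal is correct, and it reaches the lemma by a more self-contained route than the paper does. Both arguments pivot on the identity $S_c''(\phi_c)=-cL_\omega$ of \eqref{2.13} and reduce everything to the scalar Schr\"odinger operator $L_\omega$ of \eqref{2.12}; but where the paper imports from \cite{weinstein-1985-modulational} the fact that $L_\omega$ has exactly one negative eigenvalue $\lambda_{-1}$ (and supplements it with the computation $\langle S_c''(\phi_c)\partial_\omega\psi_\omega,\partial_\omega\psi_\omega\rangle<0$, which is specific to $p>4$), you prove the count outright: Weyl's theorem gives $\sigma_{\mathrm{ess}}(L_\omega)=[1-\omega^2,\infty)$ with $1-\omega^2>0$; the identity $L_\omega\psi_\omega=-p\,\psi_\omega^{p+1}$, read off from \eqref{1.5}, makes $\psi_\omega$ a negative-energy test function, so the ground state of $L_\omega$ lies below zero (for every $p>0$, not only $p>4$); and the single node of $\partial_x\psi_\omega\in\ker L_\omega$ together with Sturm oscillation forces $0$ to be the first excited eigenvalue, leaving exactly one eigenvalue below zero. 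This is essentially Weinstein's own argument, so the mathematical content coincides, but your version is self-contained where the paper's is a citation. More importantly, your sign bookkeeping is more careful than the paper's own write-up: since the scalar relating the two operators is $-c<0$, the negative eigenvalue $\lambda_{-1}$ of $L_\omega$ corresponds to the \emph{positive} eigenvalue $-c\lambda_{-1}$ of $S_c''(\phi_c)$, whose literal $L^2$ essential spectrum is $(-\infty,1-c]$, exactly as you flag; the paper's concluding identification ``$\mu_0=\lambda_{-1}$, $\xi_0=-\theta/c$'' overlooks this flip (rescaling an eigenfunction never changes its eigenvalue, while multiplying the operator by $-c$ multiplies the eigenvalue by $-c$), and its chosen test direction $\partial_\omega\psi_\omega$ is, under the correct dictionary, a \emph{positive} direction of $L_\omega$, so it does not actually witness $\lambda_{-1}$, whereas your test function $\psi_\omega$ does. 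Your reading of the statement---as the assertion that $L_\omega$, i.e.\ the quadratic form that actually enters the spectral decomposition in Proposition \ref{prop3.4} alongside the kernel description of Lemma \ref{lem3.1}, has exactly one negative direction---is the interpretation under which the lemma and its later applications are coherent, so the care you invest in that point is not pedantry but a genuine repair of the argument.
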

\begin{proof}
	It is known that the operator 
$-\partial_{xx} +(1-\omega^{2})+(p+1)\psi_{\omega}^{p}$ 
has only one negative eigenvalue(see \cite{weinstein-1985-modulational}), and we denote it by $\lambda_{-1}. $
Then there exists a unique associated eigenfunction
$ \theta \in \H1 $
such that
\begin{align}
	\label{3.3}
	-\partial_{xx} \theta +(1-\omega^{2})\theta-(p+1)\psi_{\omega}^{p}\theta
	=\lambda_{-1} \theta.
\end{align}
Using the expression of
$ S_{c}''(\phi_{c})$
in \eqref{2.13}, we have
\begin{align*}
\langle S_{c}''(\phi_{c})\partial_{\omega}\psi_{\omega}, \partial_{\omega}\psi_{\omega}\rangle 
&=-c \langle L_{\omega}\partial_{\omega}\psi_{\omega},   
   \partial_{\omega}\psi_{\omega}\rangle \\
&=-c \int_{\R} (-\partial_{xx}
    \partial_{\omega}\psi_{\omega} +(1-\omega^{2})\partial_{\omega}\psi_{\omega}-(p+1)\psi_{\omega}^{p}\partial_{\omega}\psi_{\omega})
    \cdot
    \partial_{\omega}\psi_{\omega} \dx\\
&=-c \int_{\R} 2\omega\psi_{\omega}
   \cdot
   \partial_{\omega}\psi_{\omega} \dx \\
&=-c \omega \frac{d}{d\omega}
  \Big(\big\|\psi_{\omega} \big\|^{2}_{L^{2}}\Big)\\
&=2(\frac{2}{p}-\frac{1}{2})
  (1-\omega^{2})^{\frac{2}{p}-\frac{3}{2}}
  \big\|\psi_{0} \big\|^{2}_{L^{2}}. 
\end{align*}
Note that $p>4, $
then we have
\begin{align*}
\langle S_{c}''(\phi_{c})\partial_{\omega}\psi_{\omega},   \partial_{\omega}\psi_{\omega}\rangle 
 < 0. 
\end{align*}
This implies that $S_{c}''(\phi_{c}) $
has at least one negative eigenvalue $\mu_{0}. $ Assume its associated eigenfunction $\xi_{0}, $
that is, 
\begin{align*}
S_{c}''(\phi_{c})\xi_{0}
=\mu_{0}\xi_{0}. 
\end{align*}
Using the expression of  $S_{c}''(\phi_{c}) $ in \eqref{2.13} again, the last equality yields
\begin{align*}
	-c\big[-\partial_{xx} \xi_{0} +(1-\omega^{2})\xi_{0} -(p+1)\psi_{\omega}^{p} \xi_{0}\big]
	=\mu_{0} \xi_{0}.
\end{align*} 
Then we have
$ \xi_{0}=-\frac{1}{c} \theta. $
Hence, by \eqref{3.3}, 
$(\mu_{0}, \xi_{0})$
is exactly the pair satisfying
\begin{align}
	\mu_{0}=\lambda_{-1}, \quad \xi_{0}=-\frac{1}{c} \theta.
\end{align}
This implies that $S_{c}''(\phi_{c})$
has exactly one simple negative eigenvalue. This completes the proof of Lemma \eqref{3.2}. 
\end{proof}

 \subsection{Coercivity }
 In this subsection, we give a general  coercivity property on the Hessian of the action $S_{c}''(\phi_{c}). $
%
%
 
\begin{prop}
	\label{prop3.4}
	Let $\tau_c, \Psi$ be any functions satisfying that  
\begin{align}
	\label{Assume-Coer}
	& \tau_{c}=S_{c}''(\phi_{c})\Psi,\quad \mbox{and}\quad \langle S_{c}''(\phi_{c})\Psi, \Psi\rangle  <0.
\end{align} 
	Suppose that $\xi\in\H1$ satisfies
	\begin{align}
		\label{3.10}
		\langle \xi, \partial_x\phi_c\rangle 
		=\langle \xi, \tau_{c}\rangle 
		=0.
	\end{align}
Then
\begin{align*}
	\langle S_{c}''(\phi_{c})\xi, \xi\rangle  
	\gtrsim \|\xi\|^{2}_{\H1}. 
\end{align*}
\end{prop}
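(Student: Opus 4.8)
The plan is to reduce the statement to the classical constrained--coercivity lemma of Grillakis--Shatah--Strauss and Weinstein, using the spectral picture of $S_{c}''(\phi_{c})$ supplied by Lemma \ref{lem3.1} and Lemma \ref{lem3.2}. By those two lemmas the self--adjoint operator $S_{c}''(\phi_{c})$ has a one--dimensional kernel $\ker S_{c}''(\phi_{c})=\mathrm{span}\{\partial_{x}\phi_{c}\}$, exactly one negative eigenvalue $-\nu<0$ with a normalized eigenfunction $\chi$, and the remainder of its spectrum is positive and separated from $0$ by a gap $\delta>0$. I write $P_{-},P_{0},P_{+}$ for the orthogonal projections onto $\mathrm{span}\{\chi\}$, the kernel, and the positive spectral part, respectively. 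The role of the hypothesis $\langle S_{c}''(\phi_{c})\Psi,\Psi\rangle<0$ is precisely to guarantee that $\Psi$ has a nonzero component along $\chi$, so that the single scalar constraint $\langle\xi,\tau_{c}\rangle=0$ suffices to annihilate the one unstable direction.

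First I would prove the non-negativity $\langle S_{c}''(\phi_{c})\xi,\xi\rangle\ge 0$ for every admissible $\xi$, i.e.\ every $\xi$ with $\langle\xi,\partial_{x}\phi_{c}\rangle=\langle\xi,\tau_{c}\rangle=0$. The decisive observation is that, rewriting the second constraint through $\tau_{c}=S_{c}''(\phi_{c})\Psi$ and the symmetry \eqref{2.10}, the cross term between $\xi$ and $\Psi$ vanishes: $\langle S_{c}''(\phi_{c})\xi,\Psi\rangle=\langle\xi,S_{c}''(\phi_{c})\Psi\rangle=\langle\xi,\tau_{c}\rangle=0$. Suppose, for contradiction, that $\langle S_{c}''(\phi_{c})\xi_{0},\xi_{0}\rangle<0$ for some admissible $\xi_{0}$. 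Since also $\langle S_{c}''(\phi_{c})\Psi,\Psi\rangle<0$ and the cross term is $0$, the quadratic form $v\mapsto\langle S_{c}''(\phi_{c})v,v\rangle$ is diagonal and strictly negative on the plane $\mathrm{span}\{\xi_{0},\Psi\}$; moreover $\xi_{0}$ and $\Psi$ are linearly independent, because $\xi_{0}=\mu\Psi$ would give $0=\langle\xi_{0},\tau_{c}\rangle=\mu\langle S_{c}''(\phi_{c})\Psi,\Psi\rangle\ne 0$. Thus $S_{c}''(\phi_{c})$ is negative definite on a two--dimensional subspace, contradicting Lemma \ref{lem3.2}, which asserts that the maximal dimension of such a subspace is one. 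Hence $\langle S_{c}''(\phi_{c})\xi,\xi\rangle\ge 0$ on the constrained space.

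To upgrade the non-negativity to the strict bound I would argue by contradiction and compactness. If the claimed inequality failed, there would be a sequence $\xi_{n}$ with $\|\xi_{n}\|_{H^{1}}=1$, $\langle\xi_{n},\partial_{x}\phi_{c}\rangle=\langle\xi_{n},\tau_{c}\rangle=0$, and $\langle S_{c}''(\phi_{c})\xi_{n},\xi_{n}\rangle\to 0$. Using \eqref{2.8}, the form splits into its principal part, which controls the full $H^{1}$ norm, plus the lower--order term $(p+1)\int_{\R}\phi_{c}^{p}\xi_{n}^{2}\,dx$, which is bounded by $\|\phi_{c}\|_{L^{\infty}}^{p}\|\xi_{n}\|_{L^{2}}^{2}$. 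Passing to a subsequence with $\xi_{n}\rightharpoonup\xi_{\ast}$ weakly in $H^{1}(\R)$, the exponential decay of $\phi_{c}$ makes multiplication by $\phi_{c}^{p}$ a compact perturbation, so that $\int_{\R}\phi_{c}^{p}\xi_{n}^{2}\,dx\to\int_{\R}\phi_{c}^{p}\xi_{\ast}^{2}\,dx$; if $\xi_{\ast}=0$ this forces the principal part, hence $\|\xi_{n}\|_{H^{1}}$, to $0$, contradicting the normalization, so $\xi_{\ast}\ne 0$. The orthogonality conditions pass to the weak limit, and weak lower semicontinuity together with the non-negativity from the previous step give $\langle S_{c}''(\phi_{c})\xi_{\ast},\xi_{\ast}\rangle=0$, so $\xi_{\ast}$ is a constrained minimizer; a Lagrange--multiplier computation then places $\xi_{\ast}$ in $\mathrm{span}\{\partial_{x}\phi_{c},\tau_{c}\}\oplus\ker S_{c}''(\phi_{c})$, and the two constraints force $\xi_{\ast}=0$, the final contradiction.

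I expect the main obstacle to be exactly this quantitative step: turning the soft inequality $\langle S_{c}''(\phi_{c})\xi,\xi\rangle\ge 0$ into a strict $H^{1}$--lower bound with a uniform constant, where one must simultaneously control the negative--eigenvalue coefficient $\langle\xi,\chi\rangle$ through the single constraint $\langle\xi,\tau_{c}\rangle=0$ and recover the $\|\partial_{x}\xi\|_{L^{2}}$ contribution from the principal part. The compactness argument above is the clean way to handle this, the essential analytic input being the relative compactness of the potential term, which is furnished by the exponential decay of $\phi_{c}$ recorded in the introduction.
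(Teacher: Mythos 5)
Your proposal is correct in substance but follows a genuinely different route from the paper. The paper's proof is a direct, quantitative spectral-decomposition computation: after establishing the spectral picture (Step 1, via Weyl's theorem), it writes $\xi=a_{\xi}\xi_{0}+b_{\xi}\partial_{x}\phi_{c}+g_{\xi}$ and $\Psi=a\xi_{0}+b\partial_{x}\phi_{c}+g$ with $\xi_{0}$ the negative eigenfunction and $g,g_{\xi}$ in the positive spectral subspace, kills $b_{\xi}$ with the first constraint, converts the second constraint $0=\mu_{0}aa_{\xi}+\langle S_{c}''(\phi_{c})g,g_{\xi}\rangle$ via Cauchy--Schwarz into the explicit bound $\langle S_{c}''(\phi_{c})\xi,\xi\rangle\geq\frac{\delta_{0}\sigma}{\delta_{0}+\langle S_{c}''(\phi_{c})g,g\rangle}\|g_{\xi}\|_{L^{2}}^{2}$, and finally bootstraps from $L^{2}$ to $H^{1}$ control using the explicit form of the operator in \eqref{3.20}. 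You instead prove non-negativity on the constrained space by an index argument (the vanishing cross term $\langle S_{c}''(\phi_{c})\xi,\Psi\rangle=\langle\xi,\tau_{c}\rangle=0$ would otherwise produce a two-dimensional negative-definite subspace, contradicting Morse index one), and then upgrade to coercivity by contradiction and weak compactness, using that multiplication by $\phi_{c}^{p}$ is relatively compact and identifying the weak limit by Lagrange multipliers. Your first step is essentially the paper's Corollary \ref{cor:a.4} proved by min--max instead of Cauchy--Schwarz; your second step trades the paper's explicit constants for a softer, shorter argument, and as a by-product yields the $H^{1}$ bound directly from the normalization $\|\xi_{n}\|_{H^{1}}=1$, so no separate bootstrap analogous to \eqref{3.20} is needed.

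Two points need patching. First, both of your steps use that, apart from $\mu_{0}$ and the kernel, the spectrum of $S_{c}''(\phi_{c})$ is positive and bounded away from zero, and you attribute this to Lemmas \ref{lem3.1}--\ref{lem3.2}. Those lemmas give only the kernel and the uniqueness of the negative eigenvalue; the positivity of the essential spectrum --- hence the spectral gap, the validity of the min--max count ``one negative eigenvalue $\Rightarrow$ no two-dimensional negative subspace,'' and your claim that the principal part of the quadratic form controls $\|\cdot\|_{H^{1}}^{2}$ --- requires the essential-spectrum/Weyl-theorem analysis with which the paper opens its own proof (Step 1 of Proposition \ref{prop3.4}, built on the factorization $S_{c}''(\phi_{c})=-c(L+V)$). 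Without that input your two key spectral premises are unjustified, so this step must be imported. Second, the Lagrange-multiplier conclusion is misstated: $S_{c}''(\phi_{c})\xi_{\ast}\in\mathrm{span}\{\partial_{x}\phi_{c},\tau_{c}\}$ does not place $\xi_{\ast}$ in $\mathrm{span}\{\partial_{x}\phi_{c},\tau_{c}\}\oplus\ker S_{c}''(\phi_{c})$. Rather, solvability forces the $\partial_{x}\phi_{c}$-component of the right-hand side to vanish, since $\partial_{x}\phi_{c}\perp\mathrm{ran}\,S_{c}''(\phi_{c})$ and $\langle\tau_{c},\partial_{x}\phi_{c}\rangle=\langle\Psi,S_{c}''(\phi_{c})\partial_{x}\phi_{c}\rangle=0$; hence $\xi_{\ast}\in\mathrm{span}\{\Psi\}\oplus\ker S_{c}''(\phi_{c})$, and the two constraints then give $\xi_{\ast}=0$ because $\langle\Psi,\tau_{c}\rangle=\langle S_{c}''(\phi_{c})\Psi,\Psi\rangle\neq0$ and $\|\partial_{x}\phi_{c}\|_{L^{2}}\neq0$. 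The contradiction you want survives, but only through this corrected identification.
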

\begin{proof}
	From the expression of $S_{c}''(\phi_{c})$ in \eqref{2.13}, we can write $S_{c}''(\phi_{c})$ as
\begin{align*}
	S_{c}''(\phi_{c})=-c(L+V), 
\end{align*}
where 
$L=-\partial_{xx}+(1-\omega^{2}), $
and
$V=-(p+1)\psi_{\omega}^{p}.$
Hence $V$ is a compact perturbation of the self-adjoint operator $L. $\\
\textit{Step 1. Analyze the spectrum of 
	$S_{c}''(\phi_{c}). $ }
  We first compute the essential spectrum of $L. $
  Note that for any $g\in \H1, $
  \begin{align}
  	\label{3.11}
 \langle Lg, g\rangle 
 &=\int_{\R} \big(-\partial_{xx}g+(1-\omega^{2})g\big) \cdot g \dx \notag \\
 &=\big\|\partial_x g\big\|^{2}_{L^2}     +(1-\omega^{2})\big\|g\big\|^2_{L^{2}}. 
  \end{align}
Since $c=\omega^{-2}, c>1, $
we can get $|\omega|<1, $ and thus
\begin{align*}
	 \langle Lg, g \rangle  
	 \gtrsim \big\|g\big\|^{2}_{\H1}.
\end{align*}
  This means that there exists $ \delta>0 $
such that the essential spectrum of $L$ is 
$[\delta, +\infty). $
By Weyl Theorem, 
$ S_{c}''(\phi_{c})$ and $L$
share the same essential spectrum.
So we obtain the essential spectrum of $S_{c}''(\phi_{c}). $
Recall that we have obtained the only one negative eigenvalue $\mu_{0}$ of $S_{c}''(\phi_{c})$ in Lemma \ref{lem3.2} and the kernel of $S_{c}''(\phi_{c})$ in Lemma \ref{lem3.1}. 
So the discrete spectrum of $S_{c}''(\phi_{c})$ is $\mu_{0}, 0, $ and the essential spectrum is 
$[\delta, +\infty). $
\\
\textit{Step 2. Positivity.}
By Lemma \ref{lem3.2}, we have the unique negative eigenvalue $\mu_{0} $ and the eigenfunction $\xi_{0}$ of $S_{c}''(\phi_{c}). $
For convenience, we normalize the eigenfunction $\xi_{0}$ such that $\big\|\xi_{0}\big\|_{L^{2}}=1. $
Hence, for  $ \xi_{0}\in\H1, $
by the spectral decomposition theorem we can write the decomposition of $\xi$ along the spectrum of $S_{c}''(\phi_{c}), $
\begin{align*}
	\xi=a_{\xi}\xi_{0}
	      +b_{\xi}\partial_x\phi_{c}
	      +g_{\xi}, 
\end{align*}
where $a_{\xi}, b_{\xi}\in \R, $
and $g_{\xi}$ lies in the positive eigenspace of $S_{c}''(\phi_{c}), $
that is, $g_{\xi}$ satisfies
\begin{align*}
	\langle g_{\xi}, \xi_{0}\rangle 
	=\langle g_{\xi}, \partial_x\phi_{c}\rangle 
	=0, 
\end{align*}
and there exists an absolute constant $\sigma>0$ such that
\begin{align}
	\label{3.12}
	\langle S_{c}''(\phi_{c})g_{\xi}, g_{\xi}\rangle 
	\geqslant
	\sigma \big\|g_{\xi}\big\|^{2}_{L^{2}}.
\end{align}
Since $\xi$ satisfies the orthogonality condition
$\langle\xi, \partial_x\phi_{c}\rangle $
in \eqref{3.10} and
$\langle\xi_{0}, \partial_x\phi_{c} \rangle =0, $
we have
$ b_{\xi}=0, $
and thus
\begin{align}
	\label{3.13}
	\xi=a_{\xi}\xi_{0}+g_{\xi}. 
\end{align}
Substituting \eqref{3.13} into 
$\langle S_{c}''(\phi_{c})\xi, \xi\rangle , $
we get
\begin{align*}
\langle S_{c}''(\phi_{c})\xi, \xi\rangle 
&=\langle S_{c}''(\phi_{c})(a_{\xi}\xi_{0}+g_{\xi}),      a_{\xi}\xi_{0}+g_{\xi}\rangle \notag\\
&=a_{\xi}^{2}\langle S_{c}''(\phi_{c})\xi_{0},  \xi_{0}\rangle 
  +2\mu_{0}a_{\xi}\langle \xi_{0},g_{\xi}\rangle 
   +\langle S_{c}''(\phi_{c})g_{\xi}, g_{\xi}\rangle.   
\end{align*}
Due to the orthogonality property of
$\langle \xi_{0}, g_{\xi}\rangle =0, $ 
we have
\begin{align}
	\label{3.14}
	\langle S_{c}''(\phi_{c})\xi, \xi\rangle 
=\mu_{0}a_{\xi}^{2}
 +\langle S_{c}''(\phi_{c})g_{\xi}, g_{\xi}\rangle.
\end{align}
To $\Psi, $ by spectral decomposition theorem again, we may write
\begin{align*}
\Psi=a\xi_{0}+b\partial_x\phi_{c}+g, 
\end{align*}
where $a, b\in\R, $
and $g$ lies in the positive eigenspace of $S_{c}''(\phi_{c}). $
We note that
\begin{align}
	\label{S_c-psi}
	S''_c(\phi_{c})\Psi=S''_c(\phi_{c})(a\xi_{0}+b\partial_x\phi_{c}+g)=S''_c(\phi_{c})(a\xi_{0}+g).
\end{align} 

Therefore, a similar computation as above shows that
\begin{align*}
\langle S_{c}''(\phi_{c})\Psi, \Psi\rangle 
&=\langle S_{c}''(\phi_{c})(a\xi_{0}+g), (a\xi_{0}+g)\rangle  \\
&=\mu_{0}a^{2}+\langle S_{c}''(\phi_{c})g, g\rangle.
\end{align*}
For convenience, let $ -\delta_{0}=\langle S_{c}''(\phi_{c})\Psi, \Psi\rangle. $
Then by \eqref{Assume-Coer}, we know that $\delta_{0}>0. $
Moreover, we have
\begin{align}
	\label{3.15}
	-\delta_{0}
	=\mu_{0}a^{2}+\langle S_{c}''(\phi_{c})g, g\rangle.
\end{align}
By \eqref{3.13} and \eqref{S_c-psi}, using the orthogonality assumption
$\langle \xi, \tau_c \rangle =0 $ in \eqref{3.10} 
we have
\begin{align*}
0=\langle \xi, \tau_c\rangle 
&=\langle a_{\xi}\xi_{0}+g_{\xi}, S_{c}''(\phi_{c})\Psi\rangle  \\
&=\langle a_{\xi}\xi_{0}+g_{\xi}, S_{c}''(\phi_{c})(a\xi_{0}+g)\rangle   \\
&=\langle a_{\xi}\xi_{0}, S_{c}''(\phi_{c})a\xi_{0}\rangle 
  +\langle S_{c}''(\phi_{c})g, g_{\xi}\rangle  \\
&=\mu_{0}aa_{\xi}  \langle \xi_{0},\xi_{0}\rangle 
  +\langle S_{c}''(\phi_{c})g, g_{\xi}\rangle \\
&=\mu_{0}aa_{\xi}
  +\langle S_{c}''(\phi_{c})g, g_{\xi}\rangle. 
\end{align*}
So we get the equality
\begin{align*}
	0=\mu_{0}aa_{\xi}
	    +\langle S_{c}''(\phi_{c})g, g_{\xi}\rangle. 
\end{align*}
By the Cauchy-Schwartz inequality, we have
\begin{align*}
	(\mu_{0}aa_{\xi})^{2}
	&=\langle S_{c}''(\phi_{c})g, g_{\xi}\rangle ^{2}\\
	&\leqslant \langle S_{c}''(\phi_{c})g, g\rangle 
	 \langle S_{c}''(\phi_{c})g_{\xi}, g_{\xi}\rangle.
\end{align*}
This gives
\begin{align}
	\label{3.16}
	(-\mu_{0}a^{2})(-\mu_{0}a_{\xi}^{2})
    \leqslant \langle S_{c}''(\phi_{c})g, g\rangle 
	\langle S_{c}''(\phi_{c})g_{\xi}, g_{\xi}\rangle.
\end{align}
The last inequality combining with \eqref{3.15} implies that
\begin{align*}
	-\mu_{0}a_{\xi}^{2}
&\leqslant 
   \frac{\langle S_{c}''(\phi_{c})g, g\rangle 
	\langle S_{c}''(\phi_{c})g_{\xi}, g_{\xi}\rangle }
     {-\mu_{0}a^{2}}\\
&=\frac{\langle S_{c}''(\phi_{c})g, g\rangle 
	\langle S_{c}''(\phi_{c})g_{\xi}, g_{\xi}\rangle }
    {\delta_{0}+\langle S_{c}''(\phi_{c})g, g\rangle },
\end{align*}
that is 
\begin{align}
	\label{3.17}
	\mu_{0}a_{\xi}^{2}
	\geqslant
	-\frac{\langle S_{c}''(\phi_{c})g, g\rangle 
		\langle S_{c}''(\phi_{c})g_{\xi}, g_{\xi}\rangle }
	{\delta_{0}+\langle S_{c}''(\phi_{c})g, g\rangle }. 
\end{align}
Inserting \eqref{3.17} into \eqref{3.14}, we obtain
\begin{align*}
	\langle S_{c}''(\phi_{c})\xi, \xi\rangle 
&\geqslant
  -\frac{\langle S_{c}''(\phi_{c})g, g\rangle 
	\langle S_{c}''(\phi_{c})g_{\xi}, g_{\xi}\rangle }
    {\delta_{0}+\langle S_{c}''(\phi_{c})g, g\rangle }
  +\langle S_{c}''(\phi_{c})g_{\xi}, g_{\xi}\rangle \\
&=\langle S_{c}''(\phi_{c})g_{\xi}, g_{\xi}\rangle 
  \left(1-
  \frac{\langle S_{c}''(\phi_{c})g, g\rangle }
       {\delta_{0}+\langle S_{c}''(\phi_{c})g, g\rangle }\right)\\
&=\langle S_{c}''(\phi_{c})g_{\xi}, g_{\xi}\rangle 
  \frac{\delta_{0}}{\delta_{0}+\langle S_{c}''(\phi_{c})g, g\rangle }. 
\end{align*}
Recalling that $g_{\xi}$ satisfies \eqref{3.12}, we have
\begin{align}
	\label{3.18}
	\langle S_{c}''(\phi_{c})\xi, \xi\rangle 
\geqslant
  \frac{\delta_{0} \sigma}
       {\delta_{0}+\langle S_{c}''(\phi_{c})g, g\rangle }
  \big\|g_{\xi}\big\|^{2}_{L^{2}}, 
  \quad \sigma>0. 
\end{align}
From the expression of $\xi$ in \eqref{3.13} and the inequality \eqref{3.14}, 
we have
\begin{align*}
	\|\xi\|^{2}_{L^{2}}
 &=\big\|a_{\xi}\xi_{0}+g_{\xi}\big\|^{2}_{L^{2}}
  =a_{\xi}^{2}+\big\|g_{\xi}\big\|^{2}_{L^{2}}\\
 &\leqslant
 -\frac{\langle S_{c}''(\phi_{c})g, g\rangle 
 	    \langle S_{c}''(\phi_{c})\xi, \xi\rangle }
       {\mu_{0}\delta_{0}}
 +\big\|g_{\xi}\big\|^{2}_{L^{2}}\\
&\lesssim
  \langle S_{c}''(\phi_{c})\xi, \xi\rangle    
\end{align*}
Therefore, this gives
\begin{align}
	\label{3.19}
	\langle S_{c}''(\phi_{c})\xi, \xi\rangle 
\gtrsim
    \|\xi\|^{2}_{L^{2}}. 
\end{align}
  To obtain the final conclusion, we still need to estimate
\begin{align*}
	\langle S_{c}''(\phi_{c})\xi, \xi\rangle 
	\gtrsim
	\|\xi\|^{2}_{\H1}. 
\end{align*}
Using the expression of $S_{c}''(\phi_{c})$ in \eqref{2.8}, we have
\begin{align*}
	\langle S_{c}''(\phi_{c})\xi, \xi\rangle 
&=\int_{\R}
    (c\partial_{xx} \xi +(1-c)\xi+(p+1)\phi_{c}^{p}\xi)
     \cdot
     \xi \dx \\
&=-c\big\|\partial_x\xi\big\|^{2}_{L^{2}}+(1-c)\|\xi\|^{2}_{L^{2}}   +(p+1)\int_{\R} |\phi_{c}|^{p}\xi^{2} \dx  
\end{align*}
Thus by \eqref{3.19}, we get
\begin{align}
	\label{3.20}
	\big\|\partial_x\xi\big\|^{2}_{L^{2}}
&=-\frac{1}{c}
 \left[\langle S_{c}''(\phi_{c})\xi, \xi\rangle 
   -(1-c)\|\xi\|^{2}_{L^{2}}
   -(p+1)\int_{\R}|\phi_{c}|^{p}\xi^{2} \dx \right]\notag\\
&\leqslant
 -\frac{1}{c}\langle S_{c}''(\phi_{c})\xi, \xi\rangle 
 +(\frac{1}{c}-1)\|\xi\|^{2}_{L^{2}}
 +\frac{p+1}{c}\big\|\phi_{c}\big\|^{p}_{L^{\infty}}
   \|\xi\|^{2}_{L^{2}} \notag\\
&\leqslant
 -\frac{1}{c}\langle S_{c}''(\phi_{c})\xi, \xi\rangle 
 +\left(\frac{1}{c}-1+\frac{p+1}{c}\big\|\phi_{c}\big\|^{p}_{L^{\infty}}\right) 
  \|\xi\|^{2}_{L^{2}} \notag\\
&\lesssim 
 \langle S_{c}''(\phi_{c})\xi, \xi\rangle 
  + \|\xi\|^{2}_{L^{2}} \notag\\
&\lesssim 
 \langle S_{c}''(\phi_{c})\xi, \xi\rangle. 
\end{align}
Therefore, together \eqref{3.19} and \eqref{3.20}, we obtain
\begin{align*}
	 \|\xi\|^{2}_{\H1}
  &= \|\xi\|^{2}_{L^{2}}+\big\|\partial_x\xi\big\|^{2}_{L^{2}}\\
  &\lesssim \langle S_{c}''(\phi_{c})\xi, \xi\rangle.
\end{align*}
Thus we obtain the desired result.
\end{proof}

\begin{cor}
	\label{cor:a.4}
	Assume
	\begin{align}
		\label{condition_1}
		\langle  S''_c(\phi_{c})\eta,  \eta \rangle \leqslant 0,  \quad  \eta \notin \ker S''_c(\phi_{c}), \quad \eta\in\H1, 
	\end{align}
then for any $\zeta \in \H1, $ s.t.
\begin{align}
	\label{orth-condition 2}
	\langle S_c''(\phi_{c})\zeta, \eta\rangle =0,
\end{align}
we have
\begin{align*}
	\langle S_c''(\phi_{c})\zeta, \zeta\rangle \geqslant 0.
\end{align*}
\end{cor}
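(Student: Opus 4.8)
The plan is to run the spectral-decomposition argument of Proposition~\ref{prop3.4} essentially in reverse. Write $L:=S_c''(\phi_c)$ for brevity. By Lemmas~\ref{lem3.1} and~\ref{lem3.2}, together with the Weyl-theorem analysis carried out in the proof of Proposition~\ref{prop3.4}, $L$ is self-adjoint with a single simple negative eigenvalue $\mu_0<0$ (normalized eigenfunction $\xi_0$, $\|\xi_0\|_{L^2}=1$), kernel $\mathrm{span}\{\partial_x\phi_c\}$, and the remainder of its spectrum contained in $[\delta,+\infty)$ for some $\delta>0$. Consequently every $f\in H^1(\R)$ decomposes orthogonally as $f=a_f\xi_0+b_f\partial_x\phi_c+g_f$, where $g_f$ lies in the positive spectral subspace and satisfies $\langle Lg_f,g_f\rangle\ge\sigma\|g_f\|_{L^2}^2$ for an absolute $\sigma>0$; moreover the three subspaces are mutually $L$-orthogonal, since $L\partial_x\phi_c=0$ and $\langle L\xi_0,g_f\rangle=\mu_0\langle\xi_0,g_f\rangle=0$.

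First I would unpack the hypothesis \eqref{condition_1}. Writing $\eta=a\xi_0+b\partial_x\phi_c+g$, the kernel component drops out of the quadratic form and $\langle L\eta,\eta\rangle=\mu_0a^2+\langle Lg,g\rangle\le0$. The crucial preliminary observation is that $a\ne0$: indeed, if $a=0$ then $\langle Lg,g\rangle\le0$, which by coercivity $\langle Lg,g\rangle\ge\sigma\|g\|_{L^2}^2$ forces $g=0$, so that $\eta=b\partial_x\phi_c\in\ker L$, contradicting $\eta\notin\ker S_c''(\phi_c)$. With $a\ne0$ secured, the hypothesis rearranges into the usable bound $\langle Lg,g\rangle\le|\mu_0|a^2$.

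Next I would expand the orthogonality constraint \eqref{orth-condition 2} and the target. Decomposing $\zeta=a_\zeta\xi_0+b_\zeta\partial_x\phi_c+g_\zeta$ and using self-adjointness together with $L\partial_x\phi_c=0$, the constraint becomes $0=\langle L\zeta,\eta\rangle=\mu_0a_\zeta a+\langle Lg_\zeta,g\rangle$, i.e. $\langle Lg_\zeta,g\rangle=-\mu_0a_\zeta a=|\mu_0|a_\zeta a$, while the quantity to be signed is $\langle L\zeta,\zeta\rangle=\mu_0a_\zeta^2+\langle Lg_\zeta,g_\zeta\rangle$. The key coupling step applies the Cauchy--Schwarz inequality for the positive-definite form $\langle L\cdot,\cdot\rangle$ on the positive subspace: chaining it with the bound from the previous paragraph gives $(|\mu_0|a_\zeta a)^2=\langle Lg_\zeta,g\rangle^2\le\langle Lg_\zeta,g_\zeta\rangle\langle Lg,g\rangle\le|\mu_0|a^2\langle Lg_\zeta,g_\zeta\rangle$. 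Dividing by $|\mu_0|a^2>0$ — legitimate precisely because $a\ne0$ — yields $|\mu_0|a_\zeta^2\le\langle Lg_\zeta,g_\zeta\rangle$, whence $\langle L\zeta,\zeta\rangle=-|\mu_0|a_\zeta^2+\langle Lg_\zeta,g_\zeta\rangle\ge0$, as claimed. The main subtlety here is not computational but logical: the hypothesis only supplies $\langle L\eta,\eta\rangle\le0$ (with equality permitted), so the whole argument hinges on using the stronger assumption $\eta\notin\ker S_c''(\phi_c)$ to force $a\ne0$; this is exactly what makes the final division valid and what guarantees the bound $\langle Lg,g\rangle\le|\mu_0|a^2$ is not vacuous. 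Note also that the degenerate possibility $g=0$ requires no separate treatment, as the Cauchy--Schwarz chain then forces $a_\zeta=0$ and the conclusion follows trivially.
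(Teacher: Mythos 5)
Your proof is correct and follows essentially the same route as the paper's: the same spectral decomposition along $\xi_0$, $\partial_x\phi_{c}$, and the positive subspace, the same expansion of the orthogonality constraint, and the same Cauchy--Schwarz inequality for the form $\langle S_c''(\phi_{c})\cdot,\cdot\rangle$ restricted to the positive subspace. If anything, your version is slightly more careful than the paper's: you explicitly establish $a\neq 0$ (via coercivity on the positive subspace together with $\eta\notin\ker S_c''(\phi_{c})$) before dividing, whereas the paper divides by $\mu_0 a_{\eta}^2$ without remarking that this quantity is nonzero.
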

\begin{proof}
	Using the similar spectral decomposition argument as in Proposition \ref{prop3.4}, we apply the notation from Proposition \ref{prop3.4}, that is: the unique negative eigenvalue $\mu_{0}$ and its corresponding normalized eigenfunction $\xi_{0}$ of $S_c''(\phi_{c}). $ So for $\eta \in \H1, $ we can write the decomposition of $\eta$ as
	\begin{align}
		\label{decomposition_eta}
		\eta=a_{\eta}\xi_{0}+b_{\eta}\partial_{x}\phi_{c}+g_{\eta},
	\end{align}
where $a_{\eta}, b_{\eta} \in \R, $ and $g_{\eta}$ lies in the positive eigenspace of $S''_c(\phi_{c}), $ that is $g_{\eta}$
satisfies 
\begin{align}
	\label{orth-property}
	\langle g_{\eta}, \xi_{0}\rangle=\langle g_{\eta}, \partial_{x}\phi_{c}\rangle=0,
\end{align}
and there exists an absolute constant $\sigma_1>0$ such that 
\begin{align}
	\label{positive-eigen1}
	\langle S''_c(\phi_{c})g_{\eta}, g_{\eta} \rangle\geqslant \sigma_1\|g_{\eta}\|^{2}_{L^{2}}.
\end{align}
Since $\eta$ satisfies $\eqref{condition_1}, $ there exists an absolute constant $\delta_1\geqslant0, $ such that 
\begin{align}
	\langle S''_c(\phi_{c})\eta, \eta \rangle=-\delta_1.
\end{align}
By Lemma \ref{lem3.1}, and combining \eqref{decomposition_eta} and \eqref{orth-property}, we have
\begin{align*}
S''_c(\phi_{c})\eta
=a_{\eta}S''_c(\phi_{c})\xi_{0}+S''_c(\phi_{c})g_{\eta}.
\end{align*}
So we obtain that 
\begin{align}
	\label{negative property of eta}
	\langle S''_c(\phi_{c})\eta, \eta \rangle
	&=\langle a_{\eta}S''_c(\phi_{c})\xi_{0}+S''_c(\phi_{c})g_{\eta}, a_{\eta}\xi_{0}+b_{\eta}\partial_{x}\phi_{c}+g_{\eta}\rangle
	\notag\\ 
	&=\mu_{0}a_{\eta}^2+\langle S''_c(\phi_{c})g_{\eta}, g_{\eta} \rangle
	=-\delta_1.
\end{align}
Similarly, we write $\zeta$ as $\zeta=a_1\xi_{0}+b_1\partial_{x}\phi_{c}+g_1, $
where $a_1, b_1\in \R, $ and $g_1$ lies in the positive eigenspace of $ S''_c(\phi_{c}). $
We note that 
\begin{align}
	\label{S_c(phi_c)zeta}
\langle S''_c(\phi_{c})\zeta,\zeta \rangle=\mu_{0}a_{1}^2+\langle S''_c(\phi_{c})g_{1}, g_1\rangle.
\end{align}
From condition \eqref{orth-condition 2}, we have
\begin{align*}
	\langle S''_c(\phi_{c})\zeta, \eta \rangle
	&=\langle a_{1}S''_c(\phi_{c})\xi_{0}+S''_c(\phi_{c})g_{1}, a_{\eta}\xi_{0}+b_{\eta}\partial_{x}\phi_{c}+g_{
	\eta}\rangle
	\\
	&=\mu_{0}a_{1}a_{\eta}+\langle S''_c(\phi_{c})g_{1}, g_{\eta}\rangle=0.
\end{align*}
So we get 
\begin{align*}
	\big(\mu_{0}a_{1}a_{\eta}\big)^2
	&=\big(\mu_{0}a_{1}^2\big)\big(\mu_{0}a_{\eta}^2\big)
	\\
	&=\langle S''_c(\phi_{c})g_{1}, g_{\eta}\rangle^2
	\\
	&\leqslant
	\langle S''_c(\phi_{c})g_{1}, g_{1}\rangle \langle S''_c(\phi_{c})g_{\eta}, g_{\eta}\rangle,
\end{align*}
where we used Cauchy-Schwartz inequality in the last step.
Combining \eqref{negative property of eta}, the last inequality implies that 
\begin{align}
	\label{lemma a.4-1}
	\mu_{0}a_{1}^2
	&\geqslant
	\frac{\langle S''_c(\phi_{c})g_{1}, g_{1}\rangle \langle S''_c(\phi_{c})g_{\eta}, g_{\eta}\rangle}{\mu_{0}a_{\eta}^2}
	\notag\\
	&=
	-\frac{\langle S''_c(\phi_{c})g_{1}, g_{1}\rangle \langle S''_c(\phi_{c})g_{\eta}, g_{\eta}\rangle}
	{\langle S''_c(\phi_{c})g_{\eta}, g_{\eta} \rangle+\delta_1}.
\end{align} 
Inserting \eqref{lemma a.4-1} into \eqref{S_c(phi_c)zeta}, we have
\begin{align*}
	\langle S''_c(\phi_{c})\zeta,\zeta \rangle
	&\geqslant
		-\frac{\langle S''_c(\phi_{c})g_{1}, g_{1}\rangle \langle S''_c(\phi_{c})g_{\eta}, g_{\eta}\rangle}
	{\langle S''_c(\phi_{c})g_{\eta}, g_{\eta} \rangle+\delta_1}
	+\langle S''_c(\phi_{c})g_{1}, g_{1}\rangle
	\\
&=\frac{\delta_1\langle S''_c(\phi_{c})g_{1}, g_{1}\rangle}{\langle S''_c(\phi_{c})g_{\eta}, g_{\eta} \rangle+\delta_1}
\geqslant0.
\end{align*}
Thus we complete the proof.

\end{proof}

\subsection{Modulation}
The modulation theory shows that by choosing suitable parameters, some orthogonality conditions as in \eqref{prop3.4} can be verified. 

\begin{prop}
	\label{prop4.1}
	Assume that $\tau_c$ be the function satisfying
	\begin{align}\label{Assume-Modulation}
	\langle \partial_{c}\phi_{c}, \tau_c\rangle \neq 0.
	\end{align}
	Moreover, suppose that there exists $\varepsilon_{0}>0$ such that for any 
	$\varepsilon \in (0, \varepsilon_{0})$, and any $ 
	  u \in U_{\varepsilon}(\phi_{c}), $
	then the following properties are verified. There exist $C^{1}$-functions 
\begin{align*}
y: U_{\varepsilon}(\phi_{c}) \rightarrow \R, \quad \lambda: U_{\varepsilon}(\phi_{c}) \rightarrow \R^{+}
\end{align*}
such that if we define $\xi$ by 
\begin{align}
	\label{4.2}
	\xi=u(\cdot +y)-\phi_{\lambda}, 
\end{align}
then  $\xi$ satisfies the following orthogonality conditions: 
\begin{align}
	\label{4.3}
	\langle \xi, \partial_x\phi_{\lambda}\rangle 
  =	\langle \xi, \tau_{\lambda}\rangle 
  =0. 
\end{align}
\end{prop}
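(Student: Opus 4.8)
The plan is to produce $y$ and $\lambda$ by the Implicit Function Theorem applied to a two-component map encoding the two orthogonality conditions in \eqref{4.3}. Define $F=(F_1,F_2):\R\times\R^{+}\times\H1\to\R^2$ by
\begin{align*}
F_1(y,\lambda,u)=\langle u(\cdot+y)-\phi_{\lambda},\,\partial_x\phi_{\lambda}\rangle,\qquad
F_2(y,\lambda,u)=\langle u(\cdot+y)-\phi_{\lambda},\,\tau_{\lambda}\rangle.
\end{align*}
Since $\phi_{\lambda},\partial_x\phi_{\lambda},\tau_{\lambda}$ are smooth, exponentially decaying, and depend smoothly on $\lambda$, and since $\langle u(\cdot+y),g\rangle=\langle u,g(\cdot-y)\rangle$ is $C^1$ in $y$ for smooth decaying $g$ and linear (hence smooth) in $u$, the map $F$ is jointly $C^1$. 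At the base point one has $F(0,c,\phi_{c})=0$, because there $u(\cdot+0)-\phi_{c}=0$. First I would compute the Jacobian of $F$ in the parameters $(y,\lambda)$ at $(0,c,\phi_{c})$ and show it is invertible.

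For the Jacobian, using $\langle\phi_{\lambda},\partial_x\phi_{\lambda}\rangle=\tfrac12\int\partial_x(\phi_{\lambda}^2)\,\dx=0$, a direct computation at $(0,c,\phi_{c})$ gives
\begin{align*}
\partial_y F_1 &= \langle\partial_x\phi_{c},\partial_x\phi_{c}\rangle=\big\|\partial_x\phi_{c}\big\|^2_{L^2}>0,\\
\partial_\lambda F_1 &= \langle\phi_{c},\partial_x\partial_c\phi_{c}\rangle=0,\\
\partial_\lambda F_2 &= -\langle\partial_c\phi_{c},\tau_c\rangle\neq0.
\end{align*}
The vanishing of $\partial_\lambda F_1$ is a parity statement: $\phi_{c}$ is even, so $\partial_x\partial_c\phi_{c}$ is odd and the integrand is odd. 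The last line is nonzero \emph{precisely} by hypothesis \eqref{Assume-Modulation}. Hence the $2\times2$ Jacobian is lower triangular, with off-diagonal entry $\partial_y F_2=\langle\partial_x\phi_{c},\tau_c\rangle$ that need not vanish, and its determinant equals $-\big\|\partial_x\phi_{c}\big\|^2_{L^2}\langle\partial_c\phi_{c},\tau_c\rangle\neq0$ regardless of the parity of $\tau_c$. This is the decisive point of the proof, and I would stress that \eqref{Assume-Modulation} is exactly the non-degeneracy condition that makes it work.

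By the Implicit Function Theorem there exist a neighborhood of $\phi_{c}$ in $\H1$ and unique $C^1$ maps $y,\lambda$ on it solving $F(y(u),\lambda(u),u)=0$, which is \eqref{4.3}. To upgrade from a neighborhood of $\phi_{c}$ to the full tube $U_{\varepsilon}(\phi_{c})$ I would invoke translation invariance: given $u\in U_{\varepsilon}(\phi_{c})$, the definition \eqref{1.9} furnishes $y_1$ with $\big\|u(\cdot+y_1)-\phi_{c}\big\|_{\H1}$ small, so $u(\cdot+y_1)$ lies in the IFT neighborhood; setting $y(u)=y_1+y\big(u(\cdot+y_1)\big)$ and $\lambda(u)=\lambda\big(u(\cdot+y_1)\big)$ produces, after shrinking $\varepsilon_0$ if needed, parameters for $u$ for which $\xi=u(\cdot+y)-\phi_{\lambda}$ satisfies \eqref{4.3}.

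The only genuine obstacle is the Jacobian computation, and it is entirely governed by \eqref{Assume-Modulation}; the remainder is the standard IFT mechanism together with a translation bookkeeping argument. I would finally record, as used in the proof of Proposition \ref{prop3.1}, that differentiating the identity $F(y(u),\lambda(u),u)=0$ and inverting the Jacobian expresses $\partial_{(u,v)}(\lambda,y)$ through $J^{-1}$ applied to $\partial_{(u,v)}F$, which yields the Lipschitz control $|\lambda-c|\lesssim\big\|u-\phi_{c}\big\|_{\H1}$.
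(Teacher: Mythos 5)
Your proposal is correct and follows essentially the same route as the paper: the Implicit Function Theorem applied to the pair $F_1=\langle\xi,\partial_x\phi_{\lambda}\rangle$, $F_2=\langle\xi,\tau_{\lambda}\rangle$, with the same Jacobian computation in which $\partial_{\lambda}F_1$ vanishes by parity and invertibility reduces, via the triangular structure, to $\|\partial_x\phi_{c}\|_{L^2}^2\langle\partial_c\phi_{c},\tau_c\rangle\neq 0$, i.e.\ exactly hypothesis \eqref{Assume-Modulation}. Your translation bookkeeping to pass from an IFT neighborhood of $\phi_c$ to the full tube $U_{\varepsilon}(\phi_{c})$ is a welcome tightening of a step the paper leaves implicit, but it does not change the substance of the argument.
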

\begin{proof}
	We use the Implicit Function Theorem to prove this proposition. Here we only give the important steps of the proof and refer the readers to \cite{weinstein-1985-modulational,weinstein-1986-lyapunov} for the similar argument. Define
\begin{align*}
	\vec p=(u; \lambda, y), \qquad \vec p_{0}=(\phi_{c}; c, 0).
\end{align*}
Let $\varepsilon$ be the parameter decided later, and define the functional pair 
$(F_{1}, F_{2}): 
  U_{\varepsilon}(\phi_{c})\times \R^{+} \times \R \rightarrow \R^{2}$
 as
\begin{align*}
	F_{1}(\vec p)=\langle  \xi, \partial_x\phi_{\lambda}\rangle , \quad
	F_{2}(\vec p)=\langle  \xi, \tau_{\lambda}\rangle.
\end{align*}
We claim that there exists $\varepsilon_{0}>0, $ such that for any $\varepsilon \in (0, \varepsilon_{0}), $ there exists a unique $C^{1}$ map:
$ U_{\varepsilon}(\phi_{c}) \rightarrow  \R^{+} \times \R $
such that 
$ (F_{1}(\vec p), F_{2}(\vec p))=0. $
  Indeed, we have
 \begin{align*}
 	F_{1}(\vec p_{0})= F_{2}(\vec p_{0})=0.
 \end{align*}
Second, we prove that
\begin{align*}
	|J|=
	\left| 
	\begin{array}{cccc}
		\partial_{\lambda}F_{1} & \partial_{y}F_{1}\\
		\partial_{\lambda}F_{2} & \partial_{y}F_{2}
	\end{array}
     \right|_{\vec p=\vec p_{0}}
     \neq 0.
\end{align*}
Indeed, a direct computation gives that
\begin{align*}
	\partial_{\lambda}F_{1}(\vec p)
	=\partial_{\lambda}\langle \xi, \partial_x\phi_{\lambda}\rangle 
   &=\partial_{\lambda}\langle u(t, x+y(t))-\phi_{\lambda}, \partial_x\phi_{\lambda}\rangle  \\
   &=\langle -\partial_{\lambda}\phi_{\lambda}, \partial_x\phi_{\lambda}\rangle 
    +\langle u(t,x+y(t))-\phi_{\lambda(t)}, \partial_{\lambda}\partial_x\phi_{\lambda}\rangle.
\end{align*}
When $\vec p=\vec p_{0}, $ we observe that
$u(t,x+y(t))-\phi_{\lambda(t)}=0, $ and the second term vanishes. So we get
\begin{align*}
		\partial_{\lambda}F_{1}(\vec p)\big|_{\vec p=\vec p_{0}} =-\langle \partial_{c}\phi_{c}, \partial_x\phi_{c}\rangle=0 
\end{align*}
as $\phi_{c}$ is an even function.
A similar computation shows that
\begin{align*}
	\partial_{y}F_{1}(\vec p)\big|_{\vec p=\vec p_{0}}
  &=\langle \partial_x u(t,x+y(t)),\partial_x\phi_{\lambda} \rangle \big|_{\vec p=\vec p_{0}}
  =\big\| \partial_x \phi_{c}\big\|_{L^2}^2;
\\ 
    \partial_{\lambda}F_{2}(\vec p)\big|_{\vec p=\vec p_{0}}
  &=\partial_{\lambda}\langle u(t,x+y(t))-\phi_{\lambda},\tau_{\lambda} \rangle \big|_{\vec p=\vec p_{0}}
  =-\langle \partial_{c}\phi_{c}, \tau_c \rangle;
\\
  	\partial_{y}F_{2}(\vec p)\big|_{\vec p=\vec p_{0}}
  &=\langle \partial_x u(t,x+y(t)), \tau_{\lambda} \rangle \big|_{\vec p=\vec p_{0}}
  =\langle \partial_{x}\phi_{c}, \tau_c\rangle. 
\end{align*}
By \eqref{Assume-Modulation}, we find that
\begin{align*}
	\left| 
	\begin{array}{cccc}
		\partial_{\lambda}F_{1} & \partial_{y}F_{1}\\
		\partial_{\lambda}F_{2} & \partial_{y}F_{2}
	\end{array}
	\right|_{\vec p=\vec p_{0}}
  &=	\left| 
  	\begin{array}{cccc}
     0 & \big\| \partial_x \phi_{c}\big\|_{L^2}^2 \\
  	-\langle \partial_{c}\phi_{c}, \tau_c \rangle & \langle \partial_{x}\phi_{c}, \tau_c\rangle
  	\end{array}
  \right|
   \\
  &=\big\| \partial_x \phi_{c}\big\|_{L^2}^2 \langle \partial_{c}\phi_{c}, \tau_c\rangle
  \neq 0.
\end{align*}
Therefore, the Implicit Function Theorem implies that there exists $\varepsilon_{0}>0$ such that for 
$\varepsilon \in (0, \varepsilon_{0}), 
  u\in U_{\varepsilon}(\phi_{c}), $ there exist unique $C^{1}$-functions
\begin{align*}
	y: U_{\varepsilon}(\phi_{c}) \rightarrow \R, \quad
	\lambda:  U_{\varepsilon}(\phi_{c}) \rightarrow \R^{+}, 
\end{align*}
such that
\begin{align}
	\label{4.5}
	\langle \xi, \partial_x\phi_{\lambda}\rangle 
  =\langle \xi, \tau_{\lambda}\rangle 
  =0.
\end{align}
This proves the Proposition.
\end{proof}

\subsection{The negativity of $\langle S''_c(\phi_{c})\Gamma_c, \Gamma_c\rangle$ (numerically checked).}
\label{numerical result}
We recall the expression of $\Gamma_c$ and $\kappa_c$ which introduced in \eqref{Gamma_c} and \eqref{kappa_c}: 
\begin{align*}
	&\Gamma_c
	=B(c)\big(c^2\Psi_{c}+\frac{c}{2}x\partial_{x}\phi_{c}+c\phi_{c}\big)+D(c)(3x^2\phi_{c}+x^3\partial_{x}\phi_c),
	\\
     &S''_c(\phi_{c})\Gamma_c=\kappa_c.
\end{align*}

$\bullet$\textit{The expression of $\kappa_c.$ } 
From Lemma \ref{lem3.3}, we have already known that 
\begin{align}
	\label{f_1}
	S''_c(\phi_{c})\Psi_{c}=\phi_{c},\\
	\label{f_2}
	S''_c(\phi_{c})(\frac{1}{2c}x\partial_{x}\phi_{c})=\partial_{xx}\phi_{c}.
\end{align}
By the expression of $	S''_c(\phi_{c})$ in \eqref{2.8}, we have
\begin{align*}
	S''_c(\phi_{c})\phi_{c}=c\partial_{xx}\phi_{c}+(1-c)\phi_{c}+(p+1)\phi_{c}^{p+1}.
\end{align*}
From equation \eqref{1.3}, we have
\begin{align}
	\label{phi_c^p+1}
	\phi_{c}^{p+1}=-c\partial_{xx}\phi_{c}+(c-1)\phi_{c}.
\end{align}
Thus we obtain
\begin{align}
	\label{S''phi_c}
		S''_c(\phi_{c})\phi_{c}=-pc\partial_{xx}\phi_{c}+p(c-1)\phi_{c}.
\end{align}
Using the expression of $	S''_c(\phi_{c})$ in \eqref{2.8} again, we obtain
\begin{align}
	\label{S''f_4-step1}
	&\quad
S''_c(\phi_{c})(3x^2\phi_{c}+x^3\partial_{x}\phi_c)\notag\\
&=c\partial_{xx}(3x^2\phi_{c}+x^3\partial_{x}\phi_c)+(1-c)(3x^2\phi_{c}+x^3\partial_{x}\phi_c)+(p+1)\phi_{c}^p(3x^2\phi_{c}+x^3\partial_{x}\phi_c)\notag\\
&=c(6\phi_{c}+18x\partial_{x}\phi_{c}+9x^2\partial_{xx}\phi_{c}+x^3\partial^3_{x}\phi_{c})+(1-c)(3x^2\phi_{c}+x^3\partial_{x}\phi_c)+(p+1)\phi_{c}^p(3x^2\phi_{c}+x^3\partial_{x}\phi_c)\notag\\
&=6c\phi_{c}+18cx\partial_{x}\phi_{c}+6cx^2\partial_{xx}\phi_{c}+3x^2\big[c\partial_{xx}\phi_{c}+(1-c)\phi_{c}+(p+1)\phi_{c}^{p+1}\big]
\notag\\
&\quad
+x^3\partial_{x}(c\partial_{xx}\phi_{c}+(1-c)\phi_{c}+\phi_{c}^{p+1}).
\end{align}
Inserting \eqref{phi_c^p+1} into \eqref{S''f_4-step1}, and by \eqref{1.3}, we have
\begin{align}
	\label{S''f_4}
	S''_c(\phi_{c})(3x^2\phi_{c}+x^3\partial_{x}\phi_c)
	=6c\phi_{c}+18cx\partial_{x}\phi_{c}+(6c-3pc)x^2\partial_{xx}\phi_{c}+3p(c-1)x^2\phi_{c}.
\end{align}
Combining \eqref{f_1}, \eqref{f_2}, \eqref{S''phi_c} and \eqref{S''f_4}, we finally obtain the concrete expression of $\kappa_c, $ that is 
\begin{align}
	\label{expression-kappa_c}
	\kappa_c&=S''_c(\phi_{c})\Gamma_c\notag\\
	&=\big[B(c)(p+1)c^2-B(c)pc+6cD(c)\big]\phi_{c}+B(c)(1-p)c^2\partial_{xx}\phi_{c}+18cD(c)x\partial_{x}\phi_{c}
	\notag\\
	&\quad
	+(6c-3pc)D(c)x^2\partial_{xx}\phi_{c}+3p(c-1)D(c)x^2\phi_{c}.
\end{align}

$\bullet$\textit{The numerical result of $\langle \kappa_{c}, \Gamma_{c} \rangle.$ }
According to \cite{Pego-1991-eigenvalue}, the solution of elliptic equation \eqref{1.3} $\phi_{c}$ is explicitly given by
\begin{align}
	\label{phi_c}
	\phi_{c}(x)=\left[\frac{1}{2}(c-1)(p+2)\right]^{\frac{1}{p}}\sech^{\frac{2}{p}}\big(\frac{1}{2}xp\sqrt{\frac{c-1}{c}}\big).
\end{align}
By \eqref{1.4} and the expression of $\Psi_{c}$ in \eqref{Psi_c}, we have
\begin{align*}
	\Psi_{c}
	&=-\frac{1}{2}\omega^{1-\frac{2}{p}}\partial_{\omega}\psi_{\omega}
	=-\frac{1}{2}\omega^{1-\frac{2}{p}}\frac{d}{dc}(c^{-\frac{1}{p}}\phi_{c})\cdot\frac{dc}{d\omega}
	\\
	&=c^{1+\frac{1}{p}}\frac{d}{dc}(c^{-\frac{1}{p}}\phi_{c}).
\end{align*}
Using \eqref{phi_c}, and by direct computation we obtain that
\begin{align}
	\label{num-Psi_c}
		\Psi_{c}
		&=\phi_{c}\cdot \Big[\frac{1}{p(c-1)}-\frac{x}{2\sqrt{c(c-1)}}\tanh\big(\frac{1}{2}xp\sqrt{\frac{c-1}{c}}\big)\Big],
		\\
\label{partial_xphi_{c}}
\partial_x\phi_{c}
&=-\sqrt{\frac{c-1}{c}}\phi_{c}\cdot \tanh\big(\frac{1}{2}xp\sqrt{\frac{c-1}{c}}\big),
\\
\label{partial_xxphi_c}
\partial_{xx}\phi_c
&=\frac{c-1}{c}\phi_{c}\cdot
 \Big[\tanh^2\big(\frac{1}{2}xp\sqrt{\frac{c-1}{c}}\big)-\frac{1}{2}p \sech ^{2}\big(\frac{1}{2}xp\sqrt{\frac{c-1}{c}}\big)\Big].
\end{align}

We check $\langle S''_c(\phi_{c})\Gamma_{c},\Gamma_{c}\rangle <0$  in the following case:
\\
{\it The limit of integration is $[-50\pi, 50\pi]. $}

(1) for $p=4.1, \langle S''_c(\phi_{c})\Gamma_{c}, \Gamma_{c}\rangle=-1024.83, $

(2) for $p=4.5, \langle  S''_c(\phi_{c})\Gamma_{c}, \Gamma_{c}\rangle=-362.82, $

(3) for $p=5, \langle  S''_c(\phi_{c})\Gamma_{c}, \Gamma_{c}\rangle=-292.10, $

(4) for $p=6, \langle   S''_c(\phi_{c})\Gamma_{c}, \Gamma_{c}\rangle=-274.60, $

(5) for $p=6.5, \langle S''_c(\phi_{c})\Gamma_{c}, \Gamma_{c}\rangle=-276.36, $

(6) for $p=10, \langle   S''_c(\phi_{c})\Gamma_{c}, \Gamma_{c}\rangle=-303.22, $

(7) for $p=30, \langle  S''_c(\phi_{c})\Gamma_{c}, \Gamma_{c}\rangle=-445.07, $

(8) for $p=50, \langle   S''_c(\phi_{c})\Gamma_{c}, \Gamma_{c}\rangle=-609.47, $

(9) for $p=70, \langle  S''_c(\phi_{c})\Gamma_{c}, \Gamma_{c}\rangle=-790.46, $

(10) for $p=100, \langle  S''_c(\phi_{c})\Gamma_{c}, \Gamma_{c}\rangle=-1083.61. $
\\

The graph of $\langle  S''_c(\phi_{c})\Gamma_{c}, \Gamma_{c}\rangle$ as a function of p is shown as below:
\begin{figure}[H]
	\centering \includegraphics[width=0.8\linewidth]{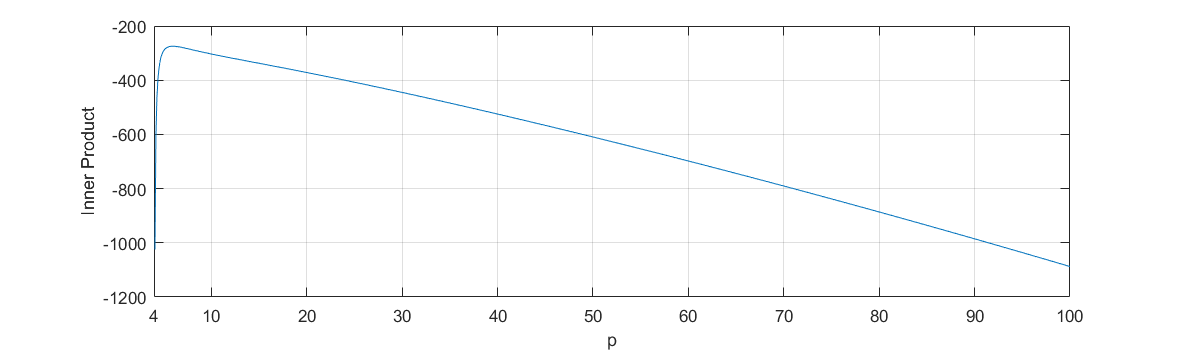}
	\caption{The negativity of $\langle S''_c(\phi_{c})\Gamma_{c}, \Gamma_{c}\rangle$. }
\end{figure}

\section*{Acknowledgment}
R. Jia and Y. Wu are partially supported by NSFC 12171356. The authors would like to thank Professor Zeng Chongchun for many valuable discussions and suggestions.

\section*{Data Availability}
There is no additional data associated to this article.

\section*{Declarations}
\subsection*{Conflict of interest}
On behalf of all authors, the corresponding author states that there is no conflict of
interest. Data sharing is not applicable to this article as no datasets were generated or analysed during the
current study.

\end{document}